\theoremstyle{plain}
\newtheorem{theorem}{Theorem}[section]
\newtheorem{proposition}[theorem]{Proposition}
\newtheorem{lemma}[theorem]{Lemma}
\theoremstyle{definition}
\newtheorem{definition}[theorem]{Definition}
\newtheorem{example}[theorem]{Example}
\newtheorem{remark}[theorem]{Remark}
\newtheorem{remarks}[theorem]{Remarks}
\newtheorem{note}[theorem]{Note}
\begin{document}

\title[Bonded knots \& braids]
  {Topology and Algebra of Bonded Knots and Braids}

\author{Ioannis Diamantis}
\address{Department of Data Analytics and Digitalisation,
Maastricht University, School of Business and Economics,
P.O.Box 616, 6200 MD, Maastricht,
The Netherlands.}
\email{i.diamantis@maastrichtuniversity.nl}

\author{Louis H. Kauffman}
\address{Department of Mathematics, Statistics and Computer Science
University of Illinois at Chicago, 851 South Morgan Street
Chicago, IL, 60607-7045.}
\address{International Institute for Sustainability with Knotted Chiral Meta Matter (WPI-SKCM2),
    Hiroshima University, 1-3-1 Kagamiyama, Higashi-Hiroshima, Hiroshima 739-8526, Japan.}
\email{kauffman@math.uic.edu}
\urladdr{http://www.math.uic.edu/~kauffman/}

\author{Sofia Lambropoulou}
\address{School of Applied Mathematical and Physical Sciences, National Technical University of Athens, Zografou campus, GR-15780 Athens, Greece.}
\email{sofia@math.ntua.gr}
\urladdr{http://www.math.ntua.gr/~sofia}

\keywords{bonded links, long bonds, standard bonds, tight bonds, topological vertex equivalence, rigid vertex equivalence, unplugging, tangle insertion, bonded bracket polynomial, bonded braids, bonded braid monoid, enhanced bonds, bonded braid group,  bonded knotoids, bonded braidoids, $L$-moves, Alexander theorem, Markov theorem.}

\subjclass[2020]{57K10, 57K12, 57K14, 20F36, 20F38, 57K99, 92C40}

\setcounter{section}{-1}

\begin{abstract}
In this paper we present a detailed study of \emph{bonded knots} and their related structures, integrating recent developments into a single framework. Bonded knots are classical knots  endowed with embedded bonding arcs modeling physical or chemical bonds. We consider bonded knots in three categories (long, standard, and tight) according to the type of bonds, and in two categories, topological vertex and rigid vertex, according to the allowed isotopy moves, and we define invariants for each category. We then develop the theory of \emph{bonded braids}, the algebraic counterpart of bonded knots. We define the {\it bonded braid monoid}, with its generators and relations, and formulate the analogues of the Alexander and Markov theorems for bonded braids, including an $L$-equivalence for bonded braids. Next, we introduce \emph{enhanced bonded knots and braids}, incorporating two types of bonds (attracting and repelling) corresponding to different interactions. We define the enhanced bonded braid group and show how the bonded braid monoid embeds into this group. Finally, we study \emph{bonded knotoids}, which are open knot diagrams with bonds, and their closure operations, and we define the \emph{bonded closure}. We introduce  \emph{bonded braidoids} as the algebraic counterpart of bonded knotoids. These models capture the topology of open chains with inter and intra-chain bonds and suggest new invariants for classifying biological macromolecules.
\end{abstract}

\maketitle

\section{Introduction}

In this paper, we develop  the theory of {\it bonded knots, braids, knotoids and braidoids}. For illustrations of these objects see  Figures~\ref{bknotbr}  and \ref{bkntooidbr}. These structures reflect different physical situations which can occur in applications, such as protein folding and molecular biology.  We consider three types of bonds: {\it long bonds} which can be knotted or linked and are not local, {\it standard bonds} which are in the form of straight segments and do not cross between themselves, and {\it tight bonds} which occur locally and are nearly equivalent to graphical nodes in the mathematical formalism. 
 We also discuss two kinds of isotopy for the three categories of bonded knots and links, long, standard, and tight, which reflect different physical assumptions about the flexibility of bonds: {\it topological vertex isotopy}, whereby the nodes of the bonds can move freely, and {\it rigid vertex isotopy}, whereby the nodes of the bonds  move along with rigid 3-balls in which they are embedded. 

It is a theme of this paper to compare long bonds, standard bonds and tight bonds in the topological  and rigid  categories. We present the set of allowed moves for each category and highlight  forbidden moves that distinguish bonded links from related concepts like tied links. We also recall and extend invariants for bonded knots: notably, the {\it unplugging technique} for the topological vertex category, the {\it tangle insertion} for the rigid vertex category, and a Kauffman bracket type polynomial (the \emph{bonded bracket}) that is invariant under regular isotopy of rigid vertex bonded links. See \cite{K,KSWZ}.

Continuing this theme we introduce and study  {\it bonded braids} as the algebraic counterpart of bonded links. We establish the {\it bonded braid monoid}, we point out its relation to the singular braid monoid \cite{Baez} and we  extend it to the {\it bonded braid group}.  We further establish the interaction between oriented topological bonded links and bonded braids by means of a closure operation, a  {\it  bonded braiding algorithm} (bonded analogue of the classical Alexander theorem) and  {\it bonded $L$-move equivalences } as  bonded analogues of the classical Markov theorem. The classical Alexander and Markov  theorems \cite{A,M} relate  knots and links to braids and the Artin braid groups, and translate their isotopy to an  algebraic equivalence among braids. So, the algebraic structure of braids together with the two theorems above  furnish the necessary basis for encoding knotted objects by words in braid generators and for the potential construction of knot invariants using algebraic tools. A great paradigm of this approach is the breakthrough work of V.F.R. Jones in the construction of the Jones polynomial (see\cite{Jo} and references therein), the ambient isotopy equivalent to the Kauffman bracket polynomial.  The $L$-move equivalent formulation \cite{LR1} of the Markov theorem provides a geometric as well as algebraic approach to the classical braid  equivalence.

\begin{figure}[H]
\begin{center}
\includegraphics[width=5in]{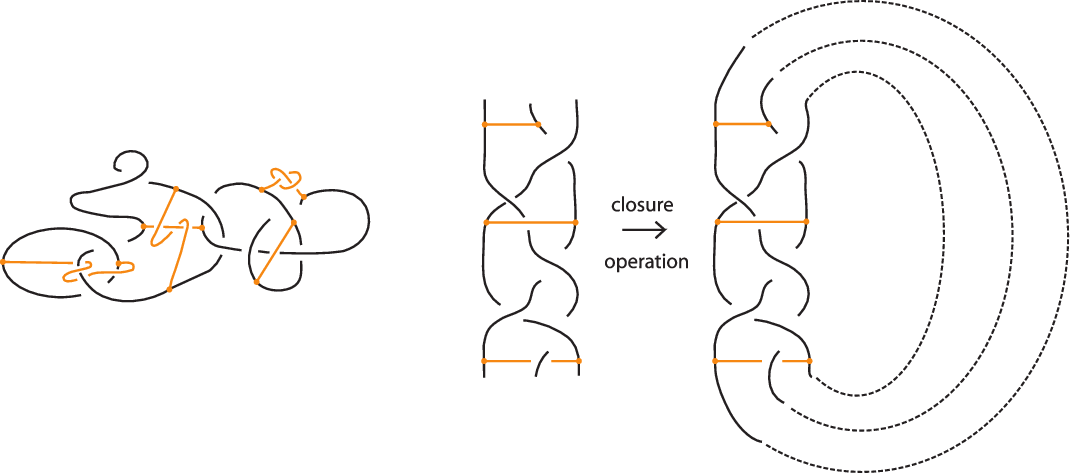}
\end{center}
\caption{On the left: a bonded link. On the right: a bonded braid and its closure.}
\label{bknotbr}
\end{figure}  

Knotted structures in proteins and other biomolecules have become a subject of intense study in recent decades. While only a small percentage of known protein structures are knotted, such knotted proteins provide important insights into protein folding and stability. Traditional knot theory requires closed loops, so to analyze an open protein chain (a polypeptide backbone) one must either artificially connect its ends or adopt new frameworks. Early approaches closed the protein chain by joining its ends in space (for example, via a direct segment connection) and then determined the knot type of the resulting closed loop. This approach can be sensitive to how the closure is performed. 

More recently, \emph{knotoids} were introduced by Turaev in 2012 \cite{T}: a knotoid is essentially a knot diagram with two free ends, considered up to an appropriate equivalence. The theory of knotoids allows one to study the topology of open curves without requiring closure, and indeed the theory of knotoids in $S^2$ (the 2-sphere) extends classical knot theory by distinguishing different ways an open curve can exhibit knotting. Knotoids have been first applied to model open knotted protein chains without artificial closure in~\cite{GDBS}. 

\begin{figure}[H]
\begin{center}
\includegraphics[width=4.4in]{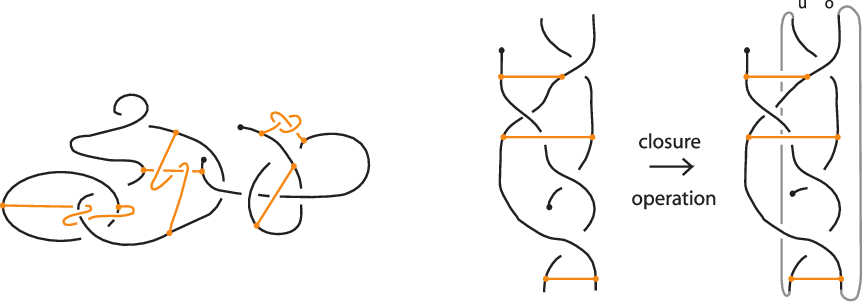}
\end{center}
\caption{On the left: a bonded knotoid. On the right: a bonded braidoid and its closure}
\label{bkntooidbr}
\end{figure}

While knotoids address the issue of open endpoints, real biomolecules such as proteins often have additional internal connections that influence their topology. For instance,  proteins form \emph{bonds} such as disulfide bridges, hydrogen bonds, or salt bridges, which link distant parts of the chain. These bonds can create topologically non-trivial structures (e.g. forming loops or lassos) even if the protein backbone is not closed \cite{KNN, SG, MTM, DSC}. A classical knot or knotoid representation of a protein does not capture the presence of such bonds. To incorporate these features, Kauffman, Magarshak \cite{KM}  and others \cite{TLKL} developed new knot invariants that take into account ``stem'' regions or paired interactions in biomolecules. In particular, studies of RNA tertiary structure and protein folding in the 1990s and 2000s led to polynomial invariants sensitive to base pair bonds or disulfide bonds. 

These developments eventually led to the notion of a \emph{bonded knotoid} \cite{GGLDSK} and more recently of a \emph{bonded link}, which is a knotoid or link augmented by embedded arcs representing bonds between points on the strands.
 Bonded knotoids or  links were formalized as topological models for protein chains with bridges in \cite{TLKL, GGLDSK} and subsequent works \cite{Ad,G}. In a bonded  link or knotoid, the usual  link or knotoid diagram is supplemented with one or more distinguished arcs, called \emph{bonds}, connecting points on the knotoid/link strands (the bonds are drawn as, say, orange arcs in diagrams). These bonds represent physical connections that are not part of the covalently linked backbone. For example, a disulfide bond in a protein can be modelled as a bond connecting two cysteine points on the chain \cite{DSC}.

Mathematically, a bond is an embedded arc whose endpoints lie on (distinct or possibly the same) components of the link diagram. The incorporation of bonds into knot diagrams gives rise to a rich extension of knot theory, with new moves and new invariants. A number of recent papers have studied invariants of bonded links and knotoids, with  applications to protein structure classification, providing a novel way to distinguish different folding patterns of open proteins, see for example \cite{KM,GGLDSK,Ad,G}.

 In this paper we focus on bonded knots/knotoids and bonded braids/braidoids. Our motivation for exploring the diagrammatic settings and equivalences of bonded knots/knotoids  comes mainly from the remarkable applications so far and their interest as mathematical objects. Then, a new diagrammatic setting leads naturally to the fundamental questions about existence or not of related braid structures. The algebraic structure of bonded braids can be used for encoding the  objects that they model with words in the generators, after applying a braiding algorithm for turning them isotopically into closed bonded braids. Topological vertex isotopy translates into  a bonded braid equivalence generated by moves between bonded braids. One can exploit algebraic tools for constructing topological invariants in order to distinguish our topological objects. The $L$-moves generating the classical braid equivalence are  fundamental, so that they provide an adaptive frame for formulating braid equivalences in other diagrammatic settings.
 For ensuring a  sufficient set of moves for generating a braid equivalence one has to examine all algorithmic choices and moves in the diagrammatic setting, and finding all of them can be very subtle. See for example \cite{LR1, La, KaLa} for different diagrammatic settings.  This is also the case here for pinning down the {\it bonded $L$-moves}, which augment the classical $L$-moves for generating the bonded braid equivalence. We further describe an algorithm for turning a bonded knotoid into a bonded braidoid and we also extend the concept of bond  by introducing two types of mutually cancelling bonds, the {\it enhanced bonds}. 
 The definition and study of bonded braids/braidoids and their relation to bonded knots/knotoids form the core of this work.

In regard to interdisciplinary connections,  we note that the interaction lines in Feynman diagrams have the formal structure of bonds in our sense. Thus a Feynman diagram can be regarded as a bonded graph (possibly with a knotted embedding in three-space). In fact, just such ideas are in back of the work of Kreimer in the book “Knots and Feynman Diagrams” \cite{Kr}  where the bonds in the Feynman diagrams undergo tangle insertion (in our sense) and are thereby associated with specific knots and links. Kreimer suggests that the topological types of these knots and links associated with the diagrams are significantly related to the physical evaluations of the diagrams.

In another direction, we note that current suggestions about knotted glueballs (closed loops of gluon flux related to the structure of protons) can be seen in our context as knotted structures consisting entirely of bonds. The strings of gluon field form highly attracting bonds between quarks in this model \cite{N,KB}. Finally, we point out that there is ongoing research in the interface of molecular biology and the construction of molecules with specified polyhedral shapes \cite{NJ}. This research also involves bonds to which our modelling applies.
The paper is structured as follows. In Section~\ref{sec2:bondedlinks}, we develop the notion of {\it long bonded links} and we establish the allowed isotopy moves in both topological and rigid vertex categories. In Section~\ref{sec3:stbondedlinks}, we introduce {\it standard bonded links}, namely, long bonded links with unknotted and unlinked bonds, and we establish that any long bonded link can be isotoped into standard form. We also give a full set of isotopy moves for standard bonded links in both topological and rigid vertex categories. We also identify some forbidden moves due to bonds. In Section~\ref{sec4:tightbondedlinks}, we introduce a stricter category of bonded links, called {\it tight bonded links}. Tight bonded links are standard bonded links whose bonds do not interact with link arcs. We also present a set of isotopy moves for this setting. 

In Section~\ref{inv} we construct invariants of long, standard and tight bonded links via the unplugging operation for the topological category and the tangle insertion technique for the rigid vertex category. Furthermore, we extend the bracket polynomial for rigid tight bonded links. We note that all three categories of bonded links (long, standard, tight) and both isotopy types (topological and rigid vertex) extend naturally to bonded knotoids, providing a consistent framework at the level of open curves.

In Section~\ref{sec6:bondedbraids}, we turn to the algebraic counterpart of bonded links: \emph{bonded braids}. We define bonded braids as braids with bonds connecting strands, and we introduce the \emph{bonded braid monoid} $BB_n$ on $n$ strands, giving a complete presentation by generators and relations and two reduced ones. Moreover, we establish its isomorphism to the singular braid monoid. In Section ~\ref{alsec} we prove the analogue of the Alexander theorem in the topological bonded setting: every topological bonded link can be obtained as the closure of a bonded braid. 

In Section~\ref{mtbb}, we prove bonded braid equivalence theorems, showing that two bonded braids have topologically equivalent closures if and only if they are related by  certain moves adapted to bonds  and (tight) bonded braid isotopy. We begin with the adaptation of classical $L$-moves to (tight) bonded braids, which we further extend  to the more subtle (tight) bonded $L$-moves. These moves provide a more fundamental understanding of how bonded isotopies translate into more algebraic moves analogous to the classical Markov theorem.

In Section~\ref{fbb}, we introduce \emph{enhanced bonded links and braids}. An \emph{enhanced bond} comes in two types, representing, for example, an attractive vs. a repelling interaction. Mathematically, an attracting bond and a repelling bond may be thought of as mutual inverses. We show how allowing two bond types effectively turns the bonded braid monoid into a \emph{bonded braid group}, and we define this enhanced bonded braid group $EB_n$ with its extended generator set. The analogues of the Alexander and the Markov theorems are established for the enhanced setting as well.

Finally, in Section~\ref{sectl} we study \emph{bonded knotoids}, which are knotoid diagrams equipped with bonds. Bonded knotoids are especially relevant for modeling open chains such as proteins, since they allow one to represent both the open ends and internal bonds in a single diagram. We give the formal definition of bonded knotoids and discuss their equivalence moves (taking care to include the usual knotoid forbidden moves at endpoints as well as new forbidden moves involving bonds). 

We then define closure operations for bonded knotoids: connecting the endpoints with an underpassing or overpassing arc produces a bonded knot or link, analogous to Turaev's knotoid closure operations. We note that different closure choices can lead to different knots, so a fixed closure convention is assumed when using knotoids to represent specific bonded knots. We also consider a new \emph{semi-closure} operation in the enhanced context, wherein an attracting bond directly connects the two endpoints of a knotoid (indicating a physical tendency for the ends to come together). 

Finally, we introduce \emph{bonded braidoids}, the algebraic counterparts of bonded knotoids, which extend braid theory to open strands equipped with bonds and provide a natural setting for describing the algebraic structure of bonded knotoids and their closures.

We conclude with a discussion of further directions, including the further algebraic exploration of the bonded braid equivalences, the  extension of the study to the plat closure of bonded braids, the formulation of a bonded Morse category and the extension of the braidoid–knotoid interaction to the bonded and enhanced bonded settings, which we plan to develop in future works.  

Much of this work has been presented in conferences in a number of places, such as University of Ljubljana, Banff International Research Station, SKCM$^2$ WPI Hiroshima University,  Vrije Universiteit Amsterdam, Odessa National University of Technology.

\section{Bonded Links}\label{sec2:bondedlinks}

Knotted objects with bonds and their isotopy moves  have been  introduced for the first time in the context of bonded knotoids  \cite{GGLDSK} and then as bonded knots with distinguishable bonds in \cite{G}. In this section we consider  bonded knots and links and their isotopy moves in full generality, in the sense that the bonds can be knotted and linked.

\subsection{Definition of Bonded Links}

An (oriented) {\it  link on $c$ components} is an embedding of  $c$  (oriented)  circles $S^1$ in the 3-sphere $S^3$. For $c=1$ a link on a single component is a {\it  knot}. Classical knots and links are considered up to {\it  isotopy} of the ambient space $S^3$, namely orientation-preserving homeomorphisms taking the one knot or link to the other. We usually study knots and links via their {\it diagrams} which are regular projections on the plane with over/under conventions at the double points, the {\it crossings}, and where isotopy is translated into planar isotopy (see left hand illustration of Figure~\ref{planar}) and the Reidemeister moves (see Figure~\ref{breid1}).

Informally, a \emph{bonded link} is a knot or link together with a set of auxiliary arcs (the bonds) connecting pairs of points on the  knot or link, which can be thought of as connecting chords. By (bonded) "knots" or "links" we shall be referring invariably to both (bonded) knots and links. Formally, we have: 

\begin{definition} \label{def:bonded}
A {\it bonded link} (or {\it bonded knot}, in case of a single component) is a pair $(L, B)$, where $L$ is a link in $S^3$, and $B$ is a set of $k$ disjoint  arcs properly embedded in the complement $S^3 \setminus L$, such that each bond arc in $B$ has its two endpoints, called {\it nodes}, attached  on $L$. The nodes are attached transversely to $L$, with each node attaching at a distinct point on $L$.  If $B=\emptyset$, then $(L,\emptyset)$ is just a classical link. In this general setting, where bonds can be knotted and linked,  they will be referred to as {\it long bonds}.

 A {\it bonded link diagram} is a planar projection of a bonded link $(L,B)$ with the usual  knot diagram over/under conventions at double points, the {\it crossings}, which may occur entirely between link arcs or bonded arcs, or may involve both a link arc and a bonded arc. A neighbourhood of a node, depicted as $\vdash$ or $\dashv$, shall be referred to as   {\it bonding site}. 

 An {\it oriented bonded link} is obtained by assigning orientations to the link components of $L$ (ignoring the bonds).
\end{definition} 

 In figures we  depict a bond as a slightly thick or colored arc (like orange), view left hand illustration of Figure~\ref{bondgr} for an example. So, a bond is an auxiliary structure that does not intersect $L$ except at its endpoints, and it may connect two points on the same component of $L$ or on different components. We may imagine $L$ as a closed polymer chain (like a protein backbone) and the bonds are additional connections (like disulfide bridges or other interactions) that bind together parts of the chain. 

\begin{figure}[H]
\begin{center}
\includegraphics[width=0.8\textwidth]{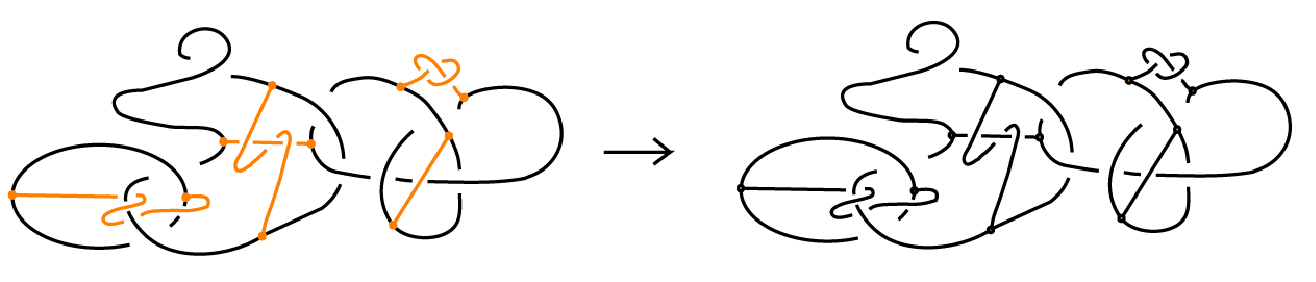}
\end{center}
\caption{On the left: a bonded link diagram, where the bonds (orange arcs) connect two points on the knot, with over/under-crossing information assigned at each bond crossing a link strand. On the right: the corresponding trivalent graph representation.}
\label{bondgr}
\end{figure}  

 \begin{remark}
Bonded knots/links can be viewed as special cases of embedded trivalent graphs \cite{K},  where there are two types of edges: {\it ordinary edges} and {\it bonds (orange)}. Bonds always start and terminate at a classical edge and the nodes  are the {\it vertices}.   
 \end{remark}


\subsection{Bonded Isotopy and Allowed Moves}

Intuitively, two bonded links $(L, B)$ and $(L', B')$ in \( S^3 \) are equivalent if one can be continuously deformed into the other via an ambient isotopy of \( S^3 \) that carries \( L \) to \( L' \) and \( B \) to \( B' \), i.e., an ambient isotopy that respects the bonds.  However, in the theory of bonded links we distinguish two types of equivalence relations, which we call \emph{topological vertex isotopy} and \emph{rigid vertex bonded isotopy}.

\begin{definition}
Two (oriented) bonded links $(L_1, B_1)$ and $(L_2, B_2)$ are {\it equivalent} as bonded links via {\it topological vertex isotopy} if there is an orientation-preserving homeomorphism $f: S^3 \to S^3$ taking $L_1$ to $L_2$ and $B_1$ to $B_2$, such that $f$ respects the attachment of bonds to link arcs (i.e. $f$ carries each bond in $B_1$ to a bond in $B_2$ connecting a corresponding pair of points on $L_2$). 
 In analogy, the (oriented) bonded links $(L_1, B_1)$ and $(L_2, B_2)$ are {\it equivalent} via {\it rigid vertex isotopy} if the nodes are considered to lie in local discs which are preserved by the isotopy.  Bonded links subjected to topological vertex isotopy shall be called {\it  topological bonded links}, while bonded links subjected to rigid vertex isotopy shall be called {\it  rigid bonded links}. 
\end{definition}

We shall further define the notion of trivial bonded link, which in knot theory corresponds to the notion of unknot and unlinks. 

\begin{definition}\label{def:trivial} 
A bonded link is {\it trivial} as topological resp. rigid bonded link if it can be turned by topological vertex isotopy resp. rigid vertex isotopy  into a planar graph.
\end{definition}

It is easy to create examples of bonded knots which are trivial as topological but non-trivial as rigid bonded knots. Clearly, rigid vertex isotopy implies topological vertex isotopy.  On the level of diagrams, this means that one can go from a diagram of $(L_1,B_1)$ to a diagram of $(L_2,B_2)$ by a finite sequence of planar isotopies and a set of local Reidemeister  moves involving any types of arcs (link arcs or  bonds or both) respecting or not rigid vertex isotopies. To see this, we adapt \cite[Theorem~2.1]{K} and~\cite[Section III]{K} to the bonded link setting and we obtain the following:

\begin{proposition}\label{breidthm}
Two (oriented) bonded links $(L_1, B_1)$ and $(L_2, B_2)$ are {\it equivalent} via topological vertex isotopy in $S^3$ if and only if any corresponding diagrams of theirs differ by a finite sequence of the following basic moves:
\begin{itemize}
    \item[i.] Planar isotopies of link arcs, bonded arcs, or nodes, as shown in Figure~\ref{planar}.
    \item[ii.] Classical Reidemeister moves (R1, R2, R3) acting on link arcs away from nodes, as exemplified in Figure~\ref{breid1}.
    \item[iii.] Reidemeister-type moves acting only on bonded arcs, as shown in Figure~\ref{brarcs}.
    \item[iv.] Reidemeister-type moves involving both link arcs and bonded arcs, as exemplified in Figure~\ref{mixed}.
    \item[v.] Vertex slide moves, allowing a bond endpoint to slide along the link to a new position, exemplified in Figure~\ref{breid2}.
    \item[vi.] Topological vertex twist moves (TVT), as exemplified in Figure~\ref{tvt}.
\end{itemize}
Similarly, two (oriented) bonded links $(L_1, B_1)$ and $(L_2, B_2)$ are {\it equivalent} as bonded links via {\it rigid vertex isotopy} if corresponding diagrams of theirs differ by a finite sequence of the moves (i)-(v) above together with the rigid vertex twists (RVT) shown in Figure~\ref{rvt}, excluding the TVT moves (Figure~\ref{tvt}). Rigid vertex isotopy moves are illustrated in Figures~\ref{planar}, \ref{breid1}, \ref{brarcs}, \ref{mixed}, \ref{breid2}, and \ref{rvt}.
\end{proposition}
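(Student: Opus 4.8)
The plan is to prove both implications in parallel for the topological and rigid vertex cases, flagging precisely where they diverge. Throughout I regard a bonded link as a spatial trivalent graph of a restricted type: at each node, two of the three incident half-edges are link arcs passing transversally through the node and the third is a bond arc, so that bonded vertex isotopy is exactly the restriction of Kauffman's spatial-graph vertex isotopy to graphs of this special shape. The two directions are \emph{sufficiency} (each listed move is induced by an admissible ambient isotopy of $S^3$) and \emph{necessity} (every admissible ambient isotopy is realized on diagrams by a finite sequence of the listed moves). This sets up the theorem as a specialization of the graph Reidemeister theorem, which is the reason the proof reduces to adapting \cite[Theorem~2.1]{K} and \cite[Section III]{K}.

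Sufficiency is the routine direction, dispatched move by move. For each of (i)--(vi) I exhibit an explicit ambient isotopy supported in a ball neighbourhood that realizes the local change while the complement is held fixed: the planar isotopies of (i), the classical Reidemeister moves (ii), and the bond and mixed Reidemeister moves (iii)--(iv) are standard local isotopies, while the vertex slide (v) is the isotopy that drags a node along its link strand. The only delicate point is (vi). The topological vertex twist (TVT) is realized by an ambient isotopy that rotates a bond end freely around the strand through the node, which is admissible in the topological vertex category but \emph{not} in the rigid category; the rigid vertex twist (RVT) instead rotates the node together with its embedding disc and is admissible in both. This is exactly why TVT appears only for topological equivalence and is replaced by RVT in the rigid case.

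For necessity I would adapt the PL general-position argument underlying the graph Reidemeister theorem. Given an admissible ambient isotopy $F_t$ carrying $(L_1,B_1)$ to $(L_2,B_2)$, I put the composition of $F_t$ with the projection into general position, so that for all but finitely many $t$ the image is a regular bonded-link diagram and at finitely many exceptional times a single codimension-one event occurs. The classification of these events is the core of the argument: away from nodes an event is a triple point, a tangency, or a cusp among link arcs, among bond arcs, or among both, accounting for moves (ii)--(iv); a strand passing near a node is again a mixed move (iv); and the events localized at a node are the node sliding along the strand (move (v)) and a half-edge sweeping through the node region, which is the vertex twist, TVT in the topological category and RVT in the rigid category (move (vi)). Between consecutive exceptional times the diagram changes only by an isotopy of the plane, which is move (i).

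The main obstacle I anticipate is the local analysis at the nodes, since this is where the topological and rigid theories part company and where the transversality of the node attachment must be maintained throughout. Concretely, I must verify that in a generic isotopy the only phenomena possible in a neighbourhood of a node are the vertex slide and the vertex twist of the appropriate flavour, and that the freedom to rotate the bond around the strand in the topological category is captured entirely by finitely many TVT moves with no residual move left unaccounted. Handling the rigid constraint amounts to checking that the embedding disc of each node is carried along rigidly, so the cyclic datum at the node is preserved up to RVT. Once this local dictionary between codimension-one vertex events and the moves (v)--(vi) is in place, assembling the finite sequence of listed moves from the finitely many exceptional times is immediate, and the topological and rigid statements follow by including TVT or RVT accordingly.
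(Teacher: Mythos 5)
Your proposal is correct in outline but takes a genuinely different route from the paper. The paper's proof does not redo any general-position analysis: it cites Kauffman's result for trivalent spatial graphs (\cite[Theorem~2.1]{K} for the topological vertex case and \cite[Section~III]{K} for the rigid case) as already supplying a complete diagrammatic move list, and the entire content of the proof is the observation that Kauffman's list differs from the one in the proposition --- it contains the mixed topological vertex twist (mTVT) and the bonded slide (BS) in place of TVT and VS --- together with explicit derivations of mTVT and BS from the listed moves (Figures~\ref{proof_slide} and~\ref{rl}). You instead propose to re-run the codimension-one transversality classification from scratch, including the local analysis at the nodes where the topological and rigid categories diverge. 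Your route is more self-contained and, if carried out in full, would independently certify that no move is missing; the paper's route is far shorter but outsources exactly that classification to \cite{K}. The one concrete point your sketch glosses over is that a natural singularity analysis at a node tends to produce Kauffman's moves (an arc sweeping over or under the entire node, and a twist carried out on the node together with both attached link arcs) rather than the proposition's VS and TVT verbatim; so even on your route you would still need the small combinatorial derivations relating the two move sets --- precisely what the paper supplies in Figures~\ref{proof_slide} and~\ref{rl} --- to land on the stated list. With that dictionary supplied, and with the node-local event classification actually carried out rather than asserted, your argument is sound.
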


We now present explicitly the moves listed in Proposition~\ref{breidthm}.
 Planar isotopy moves involving both link arcs and bonds are shown in Figure~\ref{planar}. Planar isotopy includes also sliding a node along its attaching arc. 

\begin{figure}[H]
\begin{center}
\includegraphics[width=4.8in]{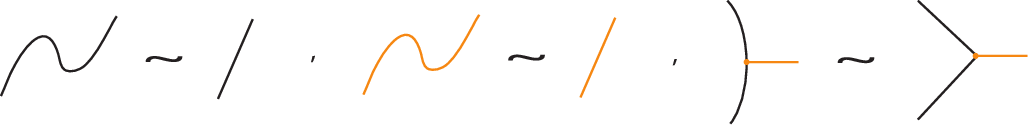}
\end{center}
\caption{The planar isotopy moves.}
\label{planar}
\end{figure}

We have all the classical Reidemeister moves (R1, R2, R3) acting on the link arcs of $L$ (away from any nodes) (as in  Figure~\ref{breid1} and their variants). 

\begin{figure}[H]
\begin{center}
\includegraphics[width=3.9in]{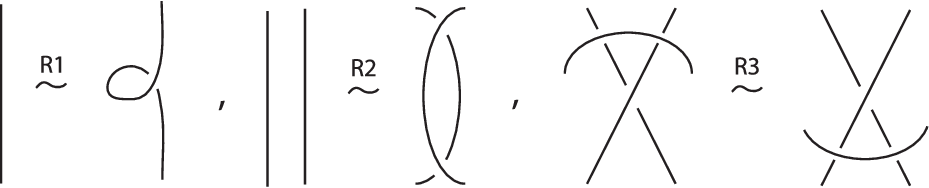}
\end{center}
\caption{ Reidemeister moves on link arcs.}
\label{breid1}
\end{figure}

In addition, we have the moves R1, R2, R3 that only involve bonded arcs (as in  Figure~\ref{brarcs} and their variants). 

\begin{figure}[H]
\begin{center}
\includegraphics[width=3.9in]{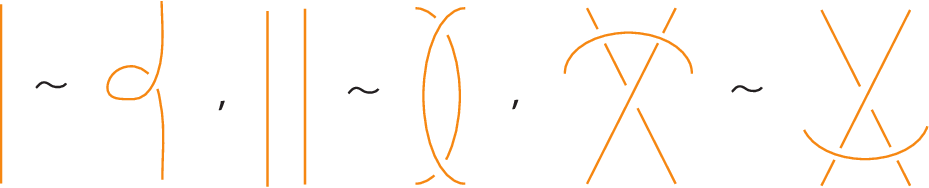}
\end{center}
\caption{Reidemeister moves on bonded arcs.}
\label{brarcs}
\end{figure}

We also allow Reidemeister moves between link and bonded arcs (see Figure~\ref{mixed}).

\begin{figure}[H]
\begin{center}
\includegraphics[width=4.8in]{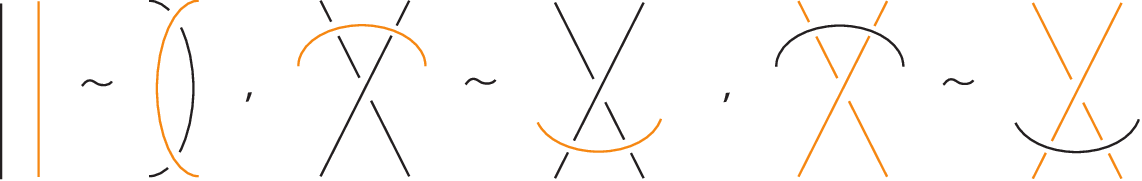}
\end{center}
\caption{Reidemeister moves between link and bonded arcs.}
\label{mixed}
\end{figure}

Moreover, a bond can pass through an arc of the link or through another bond (or vice versa), analogous to the R2 move as illustrated in Figure~\ref{breid2}. We call these moves {\it vertex slide moves}.

\begin{figure}[H]
\begin{center}
\includegraphics[width=3.6in]{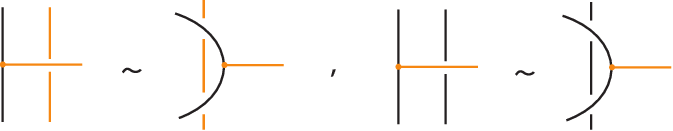}
\end{center}
\caption{Vertex slide (VS) moves.}
\label{breid2}
\end{figure}

In the topological vertex isotopy we  allow moves analogous to the classical R1 moves involving link arcs and bonds as illustrated in Figure~\ref{tvt}. We call these moves {\it topological vertex twists (TVT)}. 

\begin{figure}[H]
\begin{center}
\includegraphics[width=2.7in]{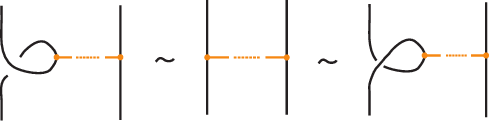}
\end{center}
\caption{Topological vertex twists (TVT).}
\label{tvt}
\end{figure}

Finally, Figure~\ref{rvt} illustrates the moves  rigid vertex twists (RVT) which replace the TVT moves for the rigid vertex isotopy setting, obtaining thus a stricter diagrammatic equivalence relation. 

\begin{figure}[H]
\begin{center}
\includegraphics[width=3.3in]{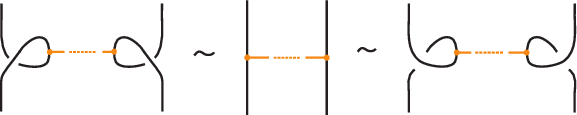}
\end{center}
\caption{Rigid vertex twists (RVT).}
\label{rvt}
\end{figure}

\noindent {\it Proof of Proposition~\ref{breidthm}.} This is an adaptation of~\cite[Theorem~2.1]{K} to the case of trivalent topological graphs in the particular form of bonded links. The only differences are as follows. Kauffman's original statement of the basic moves of topological vertex isotopy on embedded graphs includes the following moves (adapted to the bonded category) in place of TVT and VS, see Figure~\ref{rotslidem}:

\begin{figure}[H]
\begin{center}
\includegraphics[width=3.5in]{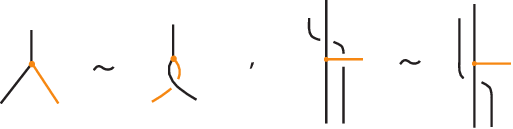}
\end{center}
\caption{A mixed topological vertex twist, mTVT, and the bonded slide moves, BS.}
\label{rotslidem}
\end{figure}

These moves, however, follow easily from the moves in Figures~\ref{breid1}, \ref{breid2}, and~\ref{tvt}, as explicitly demonstrated in Figures~\ref{proof_slide} and \ref{rl}.

\begin{figure}[H]
\begin{center}
\includegraphics[width=2.5in]{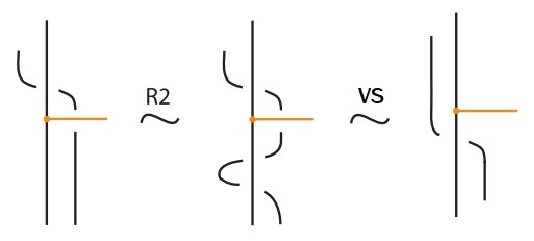}
\end{center}
\caption{A bonded slide move follows from the R2 and VS moves. So it is valid in the topological and rigid vertex isotopy.}
\label{proof_slide}
\end{figure}

\begin{figure}[H]
\begin{center}
\includegraphics[width=4.9in]{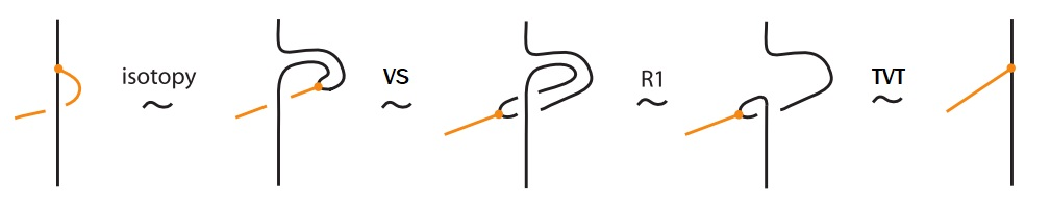}
\end{center}
\caption{An mTVT follows from  the VS, R1  and the  TVT moves. So it is valid (only) in the topological vertex isotopy.}
\label{rl}
\end{figure} 

The case of rigid vertex isotopy for bonded links is an adaptation of~\cite[Section III]{K} to the case of trivalent rigid vertex graphs in the particular form of bonded links and follows analogously. Hence the proof is complete.
\hfill $\Box$

\subsection{More Diagrammatic Moves for Bonded Links} 

We now present additional local diagrammatic moves in the setting of topological bonded links, and show that they all follow from the list of basic moves stated in Proposition~\ref{breidthm} and exemplified in Figures~\ref{planar}--\ref{tvt}. 

These additional moves fall into two main categories:  
\begin{itemize}
    \item moves involving the interaction of a bonded arc with a link arc or another bond,  
    \item moves involving the interaction of a bonded arc with a crossing.
\end{itemize}

We analyze these cases below.  

\medskip

\noindent\textbf{Bond–link arc interaction.}  
When a bonded arc interacts with a link arc to which it is attached, we have the so-called \emph{min/max sliding moves} (see Figure~\ref{minmaxproof}). These moves involve sliding a bonded arc along a link arc to which it is attached, creating or removing a crossing between the bond and the link arc. In Figure~\ref{minmaxproof} we show that these moves follow from the basic topological moves listed in Proposition~\ref{breidthm}.

\begin{figure}[H]
\begin{center}
\includegraphics[width=5.3in]{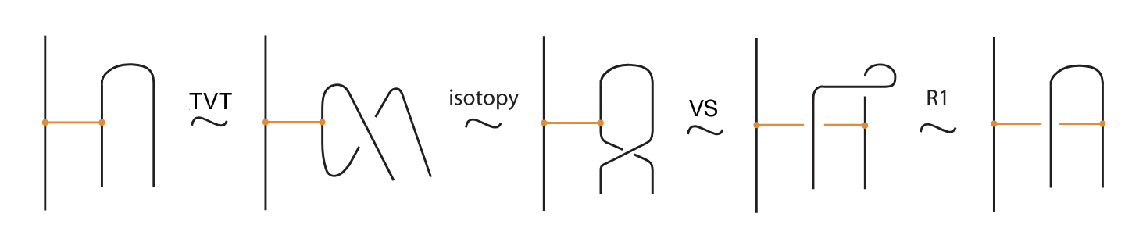}
\end{center}
\caption{The min/max sliding moves.}
\label{minmaxproof}
\end{figure}

In addition to the min/max sliding moves we also have the \emph{arc slide moves}.
An arc can also slide across a bond: entirely within the region between the nodes (\emph{internal arc slide}) (see Figure~\ref{slm}), outside the region (\emph{external arc slide}) (see Figure~\ref{br3}), or partially within (\emph{bonded slide}) (recall Figure~\ref{proof_slide}).

\begin{figure}[H]
\begin{center}
\includegraphics[width=2.1in]{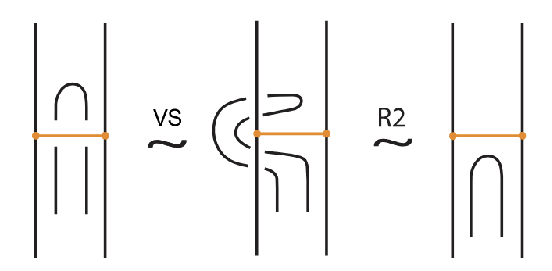}
\end{center}
\caption{An internal arc slide move expressed in terms of the basic moves.}
\label{slm}
\end{figure}

\begin{figure}[H]
\begin{center}
\includegraphics[width=4.8in]{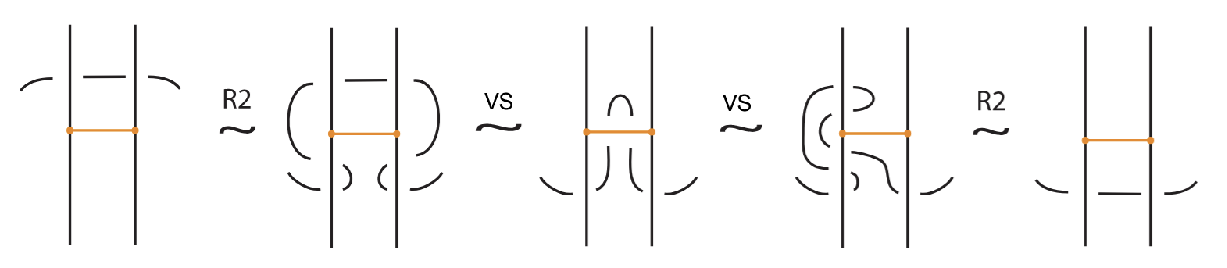}
\end{center}
\caption{An external arc slide move expressed in terms of the basic moves.}
\label{br3}
\end{figure}

\medskip

\noindent\textbf{Bond–crossing interaction.}  
Next, we consider moves involving the interaction of a bond with a crossing.  
Three scenarios arise: (i) both arcs of the crossing lie within the region of a bond, (ii) one arc provides a bonding site, (iii) both arcs provide a bonding site.  
The first two comprise the \emph{crossing slide moves} (Figures~\ref{proof+crslm} and \ref{proof+crslm1}); the latter gives rise to the \emph{bonded flype} and \emph{bonded double flype} moves (Figure~\ref{prooflem1}).
In Figures~\ref{proof+crslm}, \ref{proof+crslm1}, \ref{prooflem} and \ref{prooflem1} we show that these moves follow from the basic topological moves listed in Proposition~\ref{breidthm}.

\begin{figure}[H]
\begin{center}
\includegraphics[width=5in]{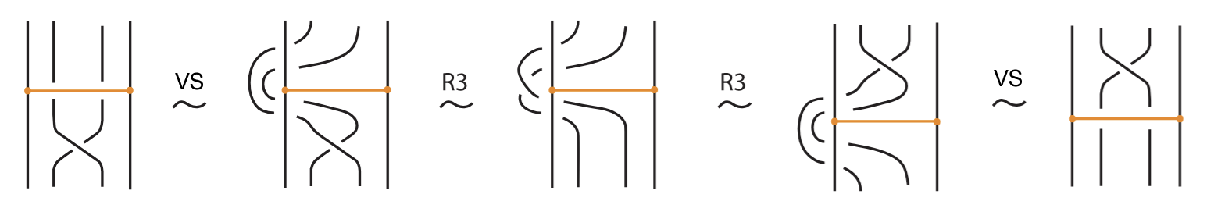}
\end{center}
\caption{The interior crossing slide moves follow from the basic topological moves.}
\label{proof+crslm}
\end{figure}

\begin{figure}[H]
\begin{center}
\includegraphics[width=2.8in]{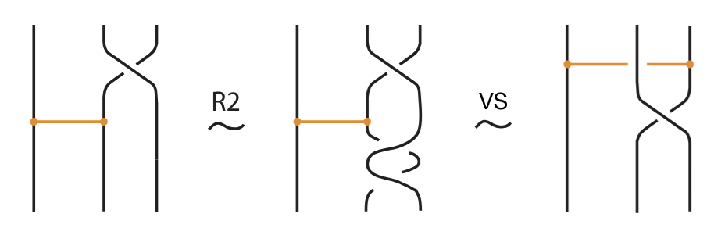}
\end{center}
\caption{A crossing slide move expressed in terms of the basic moves.}
\label{proof+crslm1}
\end{figure}

The bonded flype move follows from the TVT move (manifested as bond rotation,  that is, mTVT moves) and crossing slide moves, as demonstrated in Figure~\ref{prooflem}. The bonded double flype move is obtained by combining a bonded flype with an R2 move. The bonded double flype move, so also the bonded flype move, follows easily also in the rigid vertex isotopy  by the basic moves.

\begin{figure}[H]
\begin{center}
\includegraphics[width=5in]{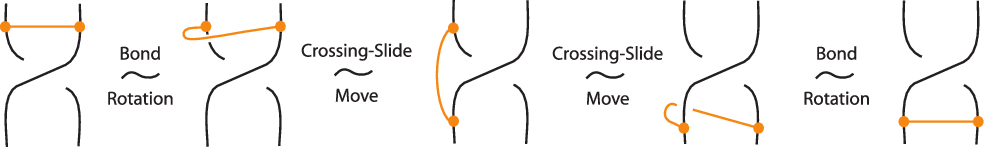}
\end{center}
\caption{The bonded flype move follows from the bond rotation and the crossing slide move.}
\label{prooflem}
\end{figure}
\begin{figure}[H]
\begin{center}
\includegraphics[width=2.1in]{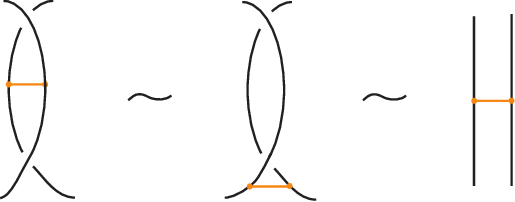}
\end{center}
\caption{The bonded double flype move follows from the bonded flype and an R2 move.}
\label{prooflem1}
\end{figure}


\subsection{Some Restrictions and Forbidden Moves}

There are some important restrictions (forbidden moves) in bonded link theory that do \emph{not} appear in classical knot theory. Notably, because bonds are \emph{embedded arcs}, one cannot slide a bond around a strand in such a way that the bond passes \emph{through two consecutive crossings of different type} (over then under, or vice versa). In effect, a bond cannot be pulled through a zig-zag in the link that would cause its endpoint to swap which side of the link it is on. Such a move would entail the bond endpoint going through a crossing, which in the topological model is not allowed unless it follows the allowed moves described above. Figure~\ref{for1} illustrates these forbidden moves. 

\begin{figure}[H]
\begin{center}
\includegraphics[width=3.6in]{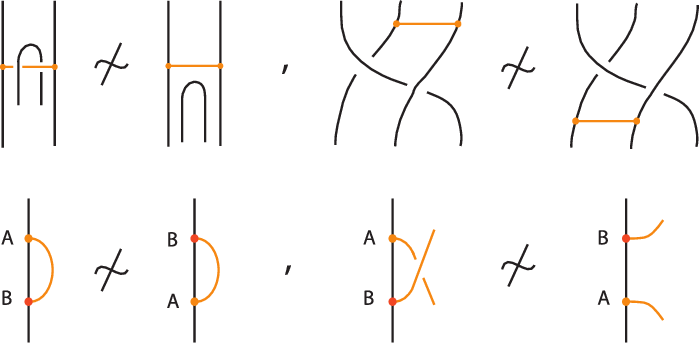}
\end{center}
\caption{Some forbidden moves in the theory of bonded links.}
\label{for1}
\end{figure}

Another related diagrammatic setting is the theory of tied links \cite{AJ1,D2}, links equipped with ties. A \emph{tie} is a non-embedded simple arc  connecting different points of a link, whose ends can slide along the arcs they are attached but remain on the link. Ties are like phantom bonds: they can pass freely through one another and through the arcs of the link. Two ties can even merge into one if they connect the same arcs. Bonded links differ crucially: bonds are embedded arcs and cannot be created or destroyed by isotopy, nor can they pass through link arcs or through themselves arbitrarily. 

\section{Standard Bonded Links}\label{sec3:stbondedlinks}

Given a bonded link diagram, it is  possible to \emph{simplify} the diagram by applying isotopy moves so that the bonds themselves become unknotted and are presented in a ``standard'' form. Knotted objects with bonds in standard form and their isotopy moves  have been first introduced in the theory of bonded knotoids \cite{GGLDSK} and later as bonded knots with distinguishable bonds in \cite{G}.

Recall that in a bonded link diagram, bonds may initially be knotted or entangled on their own, forming non-trivial configurations in space. However, by performing appropriate isotopies, one can eliminate such knotting in the bonds and arrange them in a simple configuration where each bond is unknotted. In particular, it is useful to isotope the diagram so that each bond connects two link segments in a simple \emph{H-shaped} configuration. See Figure~\ref{regbli} for an example. Consider a small neighborhood around each node on the link: the diagram can be isotoped so that near each bond endpoint the link segment appears roughly vertical and the bond itself attaches like a horizontal rung, together forming a shape reminiscent of the letter ``H'', with the bond as the horizontal bar and the link segments as the vertical bars. We refer to such a local configuration as an \emph{H-neighborhood of a bond} (see Figure~\ref{HN}).

\begin{definition}\rm
A {\it standard bonded link diagram} is a bonded link  diagram  that contains no crossings or self-crossings of bonded arcs.
\end{definition}

\begin{figure}[H]
\centering
\begin{minipage}{0.5\textwidth}
  \centering
\includegraphics[width=2in]{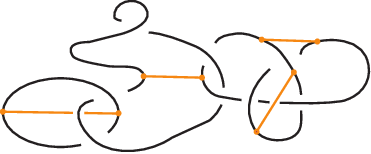}
\caption{A standard bonded link.}
\label{regbli}
\end{minipage}%
\hfill
\begin{minipage}{0.45\textwidth}
  \centering
\includegraphics[width=.8in]{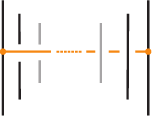}
 \caption{An H-neighborhood of a bond (standard form). The bond (horizontal) connects two link arcs (vertical).}
\label{HN}
\end{minipage}
\end{figure}

We now show that any bonded link diagram can be transformed into a standard form by a sequence of bonded isotopy moves: 

\begin{proposition}
A (topological or rigid) bonded link diagram can be transformed isotopically into a standard bonded link diagram.  
\end{proposition}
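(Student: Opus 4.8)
The plan is to argue diagrammatically by reducing the number of \emph{bond--bond double points}, that is, crossings both of whose strands are bonded arcs (including self-crossings of a single bond), since a diagram is standard precisely when this number is zero. Throughout I would use only moves common to both the topological and rigid vertex categories --- planar isotopy (Figure~\ref{planar}), the Reidemeister moves on bonded arcs (Figure~\ref{brarcs}), the mixed moves (Figure~\ref{mixed}), and the vertex slide moves (Figure~\ref{breid2}) --- so that a single argument settles both cases at once; the distinguishing twists TVT/RVT (Figures~\ref{tvt} and~\ref{rvt}) act at a node and are irrelevant to crossings between bond strands. Fixing a diagram $D$ of $(L,B)$, let $c(D)$ denote its number of bond--bond double points. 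The goal is to show that $c(D)$ can always be strictly decreased while $c(D)>0$, so that after finitely many moves we reach a standard diagram.

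First I would treat crossings between two \emph{distinct} bonds. Since the bonds are pairwise disjoint embedded arcs, a vertex slide move (Figure~\ref{breid2}) lets one bond pass through another, so any clasp or opposite pair of crossings between $b_i$ and $b_j$ can be cancelled after sliding a strand, and more globally $b_i$ can be pushed to one side of $b_j$. Choosing an innermost such crossing and applying an R2-type move on bonded arcs, together with a VS move to carry the strand past any intervening link arc (which merely converts a bond--bond crossing into bond--link crossings, leaving $c(D)$ decreased), strictly lowers $c(D)$. Iterating removes all crossings between distinct bonds.

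It then remains to remove the self-crossings of each individual bond. A single bond is an embedded arc, and using VS moves it may be passed through any link arc, so in simplifying one bond I may treat $L$ as transparent and regard the bond as a free embedded arc in $S^3$ with its endpoints sliding on $L$, which I aim to straighten using the R1 and R2 moves on bonded arcs of Figure~\ref{brarcs}; any obstructing link strand is again passed through by a VS move, so that apparent self-entanglement of the bond is transferred to bond--link crossings rather than genuinely obstructing. Once a bond is self-crossing-free and separated from the others, a final planar isotopy puts each node into the H-neighborhood form of Figure~\ref{HN}.

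The main obstacle is exactly this last reduction: justifying rigorously that an individual bond can always be freed of self-crossings, i.e.\ that no irremovable self-knotting of a bond persists once $L$ is rendered transparent by VS moves. One must verify both that the combing-straight procedure \emph{terminates} --- that pushing one bond strand off another, or off itself, never creates more bond--bond crossings than it destroys, which is what guarantees the induction on $c(D)$ actually closes --- and that the knotting a bond may carry against $L$ can indeed be expressed entirely through bond--link and link--link crossings, leaving the bond simple in the diagram. Everything else is routine bookkeeping with the moves of Proposition~\ref{breidthm}.
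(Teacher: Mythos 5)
There is a genuine gap, and it is exactly the one you flag at the end: your induction on the number of bond--bond double points $c(D)$ cannot be closed with the moves you allow yourself. A bond may be genuinely knotted, or two bonds may be genuinely linked with each other (the paper itself exhibits a knotted bond in Figure~\ref{nonisobk}); for such configurations the self-crossings and mutual crossings of bonds are \emph{not} removable by R1/R2-type moves on bonded arcs together with vertex slides past link strands, because bonds are embedded and no move lets a bonded arc pass through a bonded arc away from a node. Rendering $L$ ``transparent'' does not help: an arc whose endpoints are pinned to $L$ cannot be combed straight by diagrammatic moves that keep those endpoints fixed, and your proposal gives no mechanism by which the knotting of the bond is actually transferred to bond--link and link--link crossings. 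So the ``main obstacle'' you name is not routine bookkeeping; it is the whole content of the proposition.

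The paper's proof supplies precisely the missing idea, and it is a different (global) mechanism rather than a local crossing count. Take a bond with nodes $J_1$, $J_2$ and let $D$ be a small disc containing $J_1$ together with the three arcs emanating from it (one bond arc, two link arcs). Slide $D$ \emph{along the bond itself}, using the embedded bonded arc as a track, all the way toward $J_2$ via vertex slide moves (Figure~\ref{breid2}). As the disc travels, the bond is progressively contracted into a small H-neighborhood of $J_2$, and every crossing the bond participated in --- self-crossings, crossings with other bonds, crossings with link arcs --- is inherited by the two link arcs being dragged along inside $D$; thus all bond--bond double points are converted into link--link or link--bond crossings, which are permitted in a standard diagram (see Figure~\ref{examplereg}). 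This makes termination trivial (one pass per bond, induction on the number of bonds not yet in standard form) and works verbatim in both the topological and rigid vertex categories, since only VS moves and planar isotopy are used. If you want to salvage your write-up, replace the crossing-reduction induction by this contraction-along-the-bond argument; the rest of your framing (working with moves common to both categories, finishing with a planar isotopy into H-neighborhood form) is fine.
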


\begin{proof}
Consider a bond that is not in standard form, and let $J_1$ and $J_2$ denote its two nodes. Let $D$ be a disc containing only $J_1$ and the three arcs emanating from it, one belonging to the bond and the other two to the link. Using the  vertex slide moves (recall Figure~\ref{breid2}), we can slide $D$ along the bond toward $J_2$, progressively eliminating any self-crossings of bonded arcs. During this process, new crossings may appear, but only between link arcs, and never between arcs of the bonds themselves (see Figure~\ref{examplereg}). The procedure is then completed by induction on the number of bonds initially not in standard form, successively applying this process to each bond until all are brought into standard form.
\end{proof}

\begin{figure}[H]
\begin{center}
\includegraphics[width=5.1in]{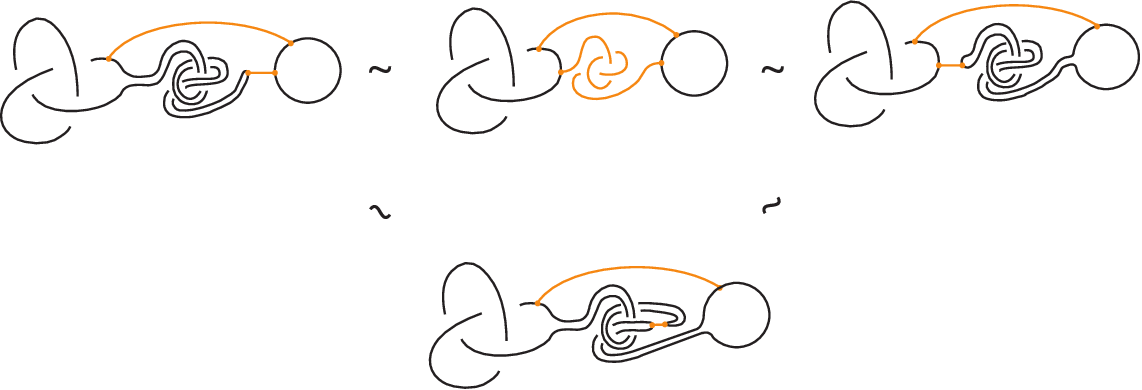}
\end{center}
\caption{An example of bond contraction.}
\label{examplereg}
\end{figure}

\begin{proposition}
Given a long bond between two nodes as exemplified in Figure~\ref{examplereg}, we can keep one node fixed and contract the bond into a neighborhood of that node. This gives rise to two contractions, one for each node. Call these contracted diagrams $D_1$ and $D_2$. Then $D_1$ and $D_2$ are topologically isotopic or rigid vertex isotopic (depending on context) as standard bonded diagrams. 
 \end{proposition}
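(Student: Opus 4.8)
The plan is to prove the statement by showing that each contracted diagram is related to the original long-bonded diagram, call it $D_0$, by allowed isotopy moves, so that $D_1$ and $D_2$ are isotopic to a common diagram and hence to each other. Recall from the proof of the previous proposition that sliding the disc $D$ around a node along the bond is realized entirely by vertex slide moves (Figure~\ref{breid2}), together with the Reidemeister and arc/crossing slide moves derived in the preceding subsection, all of which are available in both the topological and the rigid vertex categories. Carrying the slide of the $J_1$-disc all the way into a neighbourhood of $J_2$ produces $D_2$; by construction every intermediate step is one of these moves, so $D_2$ is isotopic to $D_0$. Applying the symmetric slide of the $J_2$-disc toward $J_1$ produces $D_1$ and shows $D_1$ isotopic to $D_0$. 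Transitivity then gives $D_1 \simeq D_2$ in the relevant category.

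To make the equivalence visible as a single local operation (the form most useful for the unplugging and tangle-insertion constructions later), I would also give the direct argument. In $D_2$ the bond has become a short H-shaped rung in a neighbourhood of $J_2$, while the link carries a long \emph{finger} $F$ tracing the original embedding of the bond from the former position of $J_1$. One then slides the short bond back along $F$ from $J_2$ toward $J_1$, using vertex slides to carry its near node past each link strand that $F$ crosses; as the bond retraces $F$ the finger is reabsorbed, and when the bond reaches $J_1$ the diagram is exactly $D_1$. This exhibits $D_1$ and $D_2$ as the two extreme positions of a one-parameter family of standard bonded diagrams obtained by sliding the contracted bond along the finger, each differing from the next by an allowed move.

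The step requiring the most care is the rigid vertex case, where the nodes are constrained to rigid discs and the framing (rotational) data must be tracked. Here one must verify that sliding the contracted bond along $F$ is effected by the rigid vertex twists (RVT, Figure~\ref{rvt}) and vertex slides rather than the topological twists, and that the rotational position of the node accumulated in contracting toward $J_2$ agrees, up to RVT moves, with that accumulated in contracting toward $J_1$. Since the bond is a single embedded arc, the framing read from either endpoint is the same up to orientation, so the two discrepancies differ only by full twists at the node, which are precisely what the RVT moves add or remove; this closes the rigid case. The main obstacle is thus the bookkeeping of the node framing through the slide and the confirmation that no twist survives which cannot be undone by RVT, after which the topological case follows a fortiori since it permits the additional TVT moves.
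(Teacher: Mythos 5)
Your proposal is correct, and your second (``direct'') argument is precisely the paper's proof: the paper disposes of the proposition in one sentence by observing that the long arc of the original bond provides a track along which the contracted bond is moved from one node to the other --- your finger $F$ is exactly that track. Your first argument, via transitivity through the original long-bonded diagram $D_0$ (each contraction is itself realized by vertex slide moves, so $D_1 \simeq D_0 \simeq D_2$), is a slightly different and arguably more economical route that the paper does not spell out, though it is implicit in the preceding proposition; it buys you the conclusion without having to re-examine the slide along the track, at the cost of not exhibiting the equivalence as a one-parameter family of \emph{standard} diagrams, which is the picture the paper wants for later use. Your third paragraph on tracking the node framing in the rigid vertex case is more care than the paper takes --- the paper silently relies on the fact that the vertex slide moves used in the contraction are available in both categories --- but your bookkeeping is consistent with the intended argument and does no harm.
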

\begin{proof}
Indeed, the long arc of the original bond provides a track along which to move the contracted bond from the one side to the other, as shown in  Figure~\ref{examplereg}.  
\end{proof}

We now state the following theorem describing topological and rigid   equivalence diagrammatically for standard bonded links. 

\begin{theorem} \label{reid_standard}
Two (oriented) standard bonded links are topologically isotopic if and only if any corresponding diagrams of theirs differ by a finite sequence of the moves illustrated in Figures~\ref{planar}, \ref{breid1} and \ref{rigidiso_top} with all variants.

Similarly, two (oriented) standard bonded links are rigid vertex isotopic if and only if any corresponding diagrams of theirs differ by a finite sequence of the moves illustrated in Figures~\ref{planar}, \ref{breid1} and \ref{rigidiso_reg} with all variants.  
\end{theorem}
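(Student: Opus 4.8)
The plan is to obtain this theorem as a specialization of the complete move set for long bonded links given in Proposition~\ref{breidthm}, restricted to the standard subcategory. The key structural observation is that in a standard diagram the bonds carry no self-crossings and no bond–bond crossings, so the purely bond-theoretic Reidemeister moves of Figure~\ref{brarcs}, as well as the bond–bond instances of the vertex slides of Figure~\ref{breid2}, are simply never applicable within the standard world. The remaining basic moves --- the classical Reidemeister moves on link arcs (Figure~\ref{breid1}), the mixed bond–link moves (Figure~\ref{mixed}), the link-over/under-bond vertex slides (Figure~\ref{breid2}), and the vertex twists (TVT of Figure~\ref{tvt}, resp.\ RVT of Figure~\ref{rvt}) --- are exactly the moves collected, together with their derived consequences (the arc slides, min/max slides, crossing slides, and bonded flypes established in the preceding subsection), in Figure~\ref{rigidiso_top} (resp.\ Figure~\ref{rigidiso_reg}).

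For the ``if'' direction I would simply verify that each move in Figures~\ref{breid1} and \ref{rigidiso_top} is realizable by the basic topological moves of Proposition~\ref{breidthm} --- immediate for the primitive moves, and already shown for the derived ones in the preceding subsection --- and that each such move carries a standard diagram to a standard diagram. Hence any two standard diagrams related by these moves represent topologically isotopic bonded links. The rigid vertex case is identical, with TVT replaced by RVT and Figure~\ref{rigidiso_top} replaced by Figure~\ref{rigidiso_reg}.

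The substance lies in the ``only if'' direction. Given standard diagrams $D$ and $D'$ of topologically isotopic bonded links, Proposition~\ref{breidthm} supplies a finite sequence of basic moves joining them, but this sequence may pass through non-standard intermediate diagrams in which a bond acquires self- or bond–bond crossings. The plan is to process the sequence move by move, re-standardizing after each step: whenever a move produces a crossing on a bond, I apply the standardization procedure proved above (sliding a small disc around one node along the bond toward the other node), which removes bond self-crossings using only vertex slides and link-arc Reidemeister moves. Crucially, that procedure invokes neither the pure-bond moves of Figure~\ref{brarcs} nor the vertex twists, so it lives entirely inside the restricted move set; and by the proposition that the two node-contractions $D_1$ and $D_2$ of a long bond are isotopic as standard diagrams, the resulting rerouting of a bond is well-defined up to the allowed moves. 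I would then check that the net effect of each original basic move, followed by re-standardization, decomposes into a composition of moves from Figure~\ref{rigidiso_top} (resp.\ Figure~\ref{rigidiso_reg}), producing a sequence that remains within the standard category throughout.

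The main obstacle I anticipate is the bookkeeping in this final step: verifying that eliminating an intermediate bond crossing never forces a move outside the allowed list --- in particular, that no configuration arises requiring a bond endpoint to be dragged through a crossing in the manner of the forbidden moves of Figure~\ref{for1}. I expect to resolve this by a local case analysis on the type of basic move applied and on the arcs involved, leaning on the derived crossing slide and bonded flype moves already shown to follow from the basic moves; these are precisely the operations needed to transport a crossing past a bonding site while keeping the diagram standard, so they close the argument without appeal to any forbidden or purely bond-theoretic move.
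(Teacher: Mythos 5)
The paper states Theorem~\ref{reid_standard} without proof --- it appears immediately after the two standardization propositions and is followed only by the figures --- so there is no argument of the authors' to compare yours against; the implicit justification in the paper is simply that Proposition~\ref{breidthm} restricts to the standard subcategory once one knows every diagram can be standardized. Your proposal supplies exactly the argument the paper leaves tacit, and its overall architecture is right: the ``if'' direction is a routine check that the listed moves are realizable and preserve standardness, and the ``only if'' direction must take the move sequence furnished by Proposition~\ref{breidthm}, which may leave the standard category, and project it back by re-standardizing after each elementary move.

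Two points in the ``only if'' direction need tightening before this is a proof rather than a plan. First, your appeal to the proposition that the two node-contractions $D_1$ and $D_2$ of a long bond are isotopic is slightly circular as stated: that proposition asserts topological (resp.\ rigid vertex) isotopy of the standard diagrams, i.e.\ ambient equivalence, whereas what you need is that the two standardizations differ by the \emph{restricted} move set of Figures~\ref{breid1} and~\ref{rigidiso_top}. This is repairable --- the track-sliding isotopy in that proposition's proof (Figure~\ref{examplereg}) is built from vertex slides and link-arc Reidemeister moves only, all of which lie in the restricted list --- but you must say so explicitly, and more generally you must show independence of the standardization from \emph{all} choices (which node, which track, the order in which several entangled bonds are processed), not just the two contractions the paper compares. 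Second, the core verification --- that each basic move of Proposition~\ref{breidthm} conjugated by re-standardization decomposes into restricted moves --- is precisely the content of the theorem and is only announced, not carried out; the delicate cases are the pure-bond moves of Figure~\ref{brarcs} and the bond--bond vertex slides, whose net effect after re-standardization must be shown to be either trivial or a composition of mixed moves and slides, without ever passing a node through a crossing as in Figure~\ref{for1}. Your instinct to resolve these by the derived crossing-slide and bonded-flype moves is the right one, but until that case analysis is written down the argument has the same status as the paper's unproved assertion.
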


\begin{figure}[H]
\begin{center}
\includegraphics[width=4.5in]{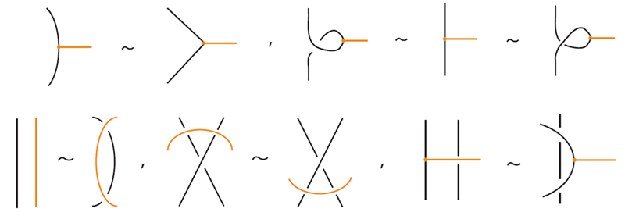} 
\end{center} 
\caption{Topological  vertex isotopy moves between standard bonded links.} 
\label{rigidiso_top}
\end{figure}

\begin{figure}[H]
\begin{center}
\includegraphics[width=4.5in]{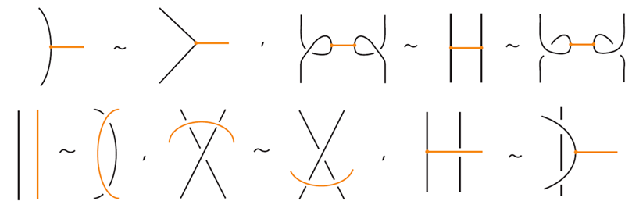} 
\end{center} 
\caption{Rigid vertex isotopy moves between standard bonded links.} 
\label{rigidiso_reg}
\end{figure}

\section{Tight Bonded Links}\label{sec4:tightbondedlinks}

We now consider a stricter category of bonded links, called \emph{tight bonded links}. While standard bonded links allow bonds to cross over or under link strands (provided the bonds themselves remain unknotted), tight bonded links forbid such crossings entirely. See Figure~\ref{tightbl} for an exmaple. In a tight bonded link, each bond sits in an H-neighborhood that is free of arcs, as illustrated in Figure~\ref{hnei}. We have the following definition (compare with \cite{GGLDSK} and \cite{G}):

\begin{definition} \label{def:tight}
A \emph{tight bonded link diagram} is a bonded link diagram in which each bond is presented in standard form and, additionally, no bond crosses any link arc. 
\end{definition}

\begin{figure}[H]
\centering
\begin{minipage}{0.4\textwidth}
  \centering
  \includegraphics[width=2in]{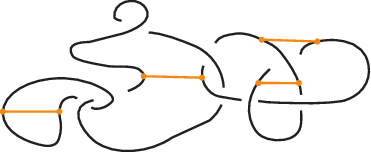}
  \caption{A tight bonded link.}
  \label{tightbl}
\end{minipage}%
\hfill
\begin{minipage}{0.4\textwidth}
  \centering
  \includegraphics[width=0.3in]{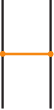}
  \caption{The H-neighborhood of a tight bond.}
  \label{hnei}
\end{minipage}
\end{figure}

\begin{proposition}
A standard bonded link diagram can be transformed isotopically (topologically and rigidly)  into a tight bonded link.  
\end{proposition}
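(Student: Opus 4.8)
The plan is to reduce a standard bonded link diagram to a tight one bond by bond, using the fact that in standard form each bond is already unknotted (no self-crossings of bonded arcs) and sits in an H-neighborhood. The only obstruction to being tight is that the bond arc may cross over or under link strands of $L$. So the goal of each step is to eliminate all such bond–link crossings, moving them off the bond and onto the link itself, where they become ordinary crossings between link arcs that are permitted in a tight diagram.

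First I would fix a single bond $\beta$ in standard form, with nodes $J_1, J_2$, and consider the arc of $\beta$ as a track in the diagram. Using the previous proposition, I may contract $\beta$ into a small H-neighborhood of one of its nodes, say $J_1$, so that the two contracted diagrams $D_1, D_2$ are isotopic. The key observation is that the crossing slide moves (Figures~\ref{proof+crslm} and~\ref{proof+crslm1}) and the min/max sliding moves (Figure~\ref{minmaxproof}), all of which were shown to follow from the basic moves of Proposition~\ref{breidthm}, allow one to push each crossing between $\beta$ and a link arc \emph{along} the bond toward a node and then \emph{off} the bond entirely. Concretely, a link arc crossing over (or under) the interior of $\beta$ can be slid past the node region via an arc slide move (internal or external, Figures~\ref{slm} and~\ref{br3}), converting a bond–link crossing into a link–link crossing. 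Iterating over the finitely many crossings on $\beta$, I arrive at a diagram in which $\beta$ carries no crossings, i.e.\ $\beta$ sits in an arc-free H-neighborhood as in Figure~\ref{hnei}, which is precisely the tight condition for that bond.

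Second, I would run an induction on the number of bonds that are not yet tight: having made $\beta$ tight, the remaining bonds are still in standard form, and applying the same clearing procedure to each of them in turn (being careful that tightening a later bond introduces only link–link crossings and never re-entangles an already-tightened bond, since the moves used never pass one bond through another in a way that creates a bond crossing) yields, after finitely many steps, a fully tight bonded link diagram. Because every move invoked—vertex slides, crossing slides, arc slides, min/max slides—was established in Section~\ref{sec2:bondedlinks} to hold in both the topological and the rigid vertex categories, the entire argument is valid in both settings simultaneously, as claimed.

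The main obstacle I anticipate is bookkeeping rather than any deep difficulty: one must verify that sliding a crossing off the bond near a node does not require passing the \emph{node itself} through a link strand, which would be a forbidden move (recall Section~0.4 and Figure~\ref{for1}). The delicate point is that the node must stay on the same side of the link throughout, so the crossing-clearing moves must be chosen to route each offending crossing away from the node along the link arc rather than across it. I expect this is handled exactly by the combination of min/max sliding and arc slide moves, which slide crossings \emph{along} attaching link arcs without ever dragging a node through a crossing; making this explicit with a careful local picture at each node is the one place where care is genuinely needed.
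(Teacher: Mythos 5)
Your proposal is correct and follows essentially the same route as the paper: the paper's proof simply applies, bond by bond, the slide isotopy of Figure~\ref{regulartotight} to push every link strand threading through a bond out past a node, converting each bond--link crossing into a link--link crossing while preserving the H-neighborhood, exactly as you describe. Your extra preliminary contraction via the preceding proposition and your explicit check that no node is dragged through a crossing are harmless refinements of the same argument, and your observation that all the moves used are valid in both the topological and rigid vertex categories matches the paper's claim.
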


\begin{proof}
Starting from a standard bonded link diagram, we apply the isotopy illustrated in Figure~\ref{regulartotight} to each bond individually. This isotopy removes any crossings between the bond and the link in the region between its two nodes, while preserving the standard H-neighborhood configuration of the bond. Repeating this process for all bonds in the diagram yields a tight bonded link diagram.
\end{proof}
\begin{figure}[H]
\begin{center}
\includegraphics[width=4in]{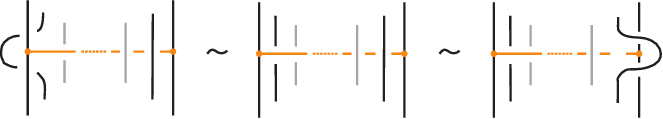}
\end{center}
\caption{A standard bond can be brought to tight form by  vertex slide moves.}
\label{regulartotight}
\end{figure}


We now state the following theorem describing topological and rigid   equivalence in the tight category through diagrammatic moves.

\begin{theorem} \label{tightequiv}
Two (oriented) tight bonded links are topological vertex isotopic in the tight category if and only if any corresponding diagrams of theirs differ by a finite sequence of planar isotopies and the Reidemeister moves for the link arcs (recall Figures~\ref{planar}, \ref{breid1}), and the moves illustrated in Figure~\ref{tightiso_top}. 
 Similarly, two (oriented) tight bonded links are rigid vertex isotopic in the tight category if and only if any corresponding diagrams of theirs differ by a finite sequence of planar isotopies and the Reidemeister moves for the link arcs, and the moves illustrated in Figure~\ref{tightiso_rigid}. 
\end{theorem}

\begin{figure}[H]
\begin{center}
\includegraphics[width=3.5in]{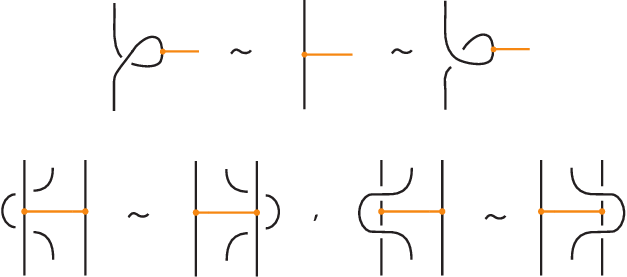}
\end{center}
\caption{Basic topological isotopy moves between tight bonded links.}
\label{tightiso_top}
\end{figure}

\begin{figure}[H]
\begin{center}
\includegraphics[width=3.3in]{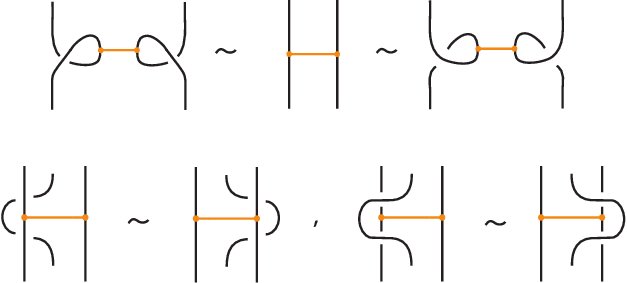}
\end{center}
\caption{Basic rigid vertex isotopy moves between tight bonded links.}
\label{tightiso_rigid}
\end{figure}

\begin{remark}
The move of Figure~\ref{br3} which may be seemingly needed to be included in the tight category, follows eventually from the other moves in the tight category as we demonstrate in Figure~\ref{R3iso}.
\end{remark}

\begin{figure}[H]
\begin{center}
\includegraphics[width=3.3in]{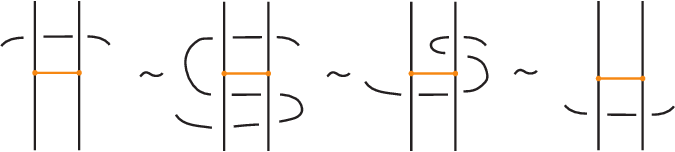}
\end{center}
\caption{An external arc slide move as a combination of basic moves in the tight category.}
\label{R3iso}
\end{figure}

\section{Invariants of Topological and Rigid Vertex Bonded Links}\label{inv}

In this section we briefly describe two general techniques for obtaining invariants for bonded links, corresponding to the two types of isotopy: topological vertex isotopy and rigid vertex isotopy, which are both  due to Kauffman, adapted to the category of bonded links. We further construct the bonded  bracket polynomial.

\subsection{Topological Bonded Links:  Unplugging Operations}

For a bonded link $(K,B)$, we view the link together with the bonds as a trivalent embedded graph. At each node (vertex) where a bond attaches, we may perform an \emph{unplugging} operation: removing one of the edges incident to the vertex as shown in the top row of Figure~\ref{unplugging}. At a 3-valent vertex there are three such choices. This yields a collection of  unknotted (redundant) arcs and classical links corresponding to the various choices of unpluggings. 
 A classical result due to Kauffman \cite{K} asserts, in the language of bonded links:

\begin{theorem} \label{th:unplugging}
Let $(K,B)$ be a bonded link. Then, considering the bonds and the link arcs as graph edges invariantly, we obtain:
\begin{itemize}
    \item The set $\mathcal{L}_{(K,B)}$ of classical links derived from all possible vertex unpluggings is a topological vertex isotopy invariant of $(K,B)$.
    \item Consequently, any ambient isotopy invariant applied to $\mathcal{L}_{(K,B)}$ is also a topological vertex isotopy invariant of $(K,B)$.
\end{itemize}
\end{theorem}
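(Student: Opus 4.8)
The plan is to reduce the statement to a finite, purely local verification using the generating set of topological vertex isotopy moves established in Proposition~\ref{breidthm}. Recall that the unplugging operation is defined node by node: at each trivalent vertex one of the three incident edges (two link arcs and one bond) is deleted and the two remaining edges are joined smoothly, so that a global choice of one deletion per node turns the trivalent graph into a genuine $1$-manifold, i.e. a classical link diagram (together with possibly some redundant unknotted arc components, which we discard). Thus $\mathcal{L}_{(K,B)}$ is the set of ambient-isotopy classes of classical links obtained as we range over all such global choices. Since it suffices to prove invariance under each of the basic moves (i)--(vi) of Proposition~\ref{breidthm}, and since every unplugging choice is made at a fixed node independently of the rest of the diagram, I would argue that each basic move induces, on every fixed global unplugging choice, an allowed modification of the resulting classical link diagram, thereby exhibiting a bijection of choices under which the unplugged links match up to ambient isotopy.

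First I would dispose of the moves that take place away from the nodes: the planar isotopies (i), the classical Reidemeister moves on link arcs (ii), the Reidemeister moves on bonds (iii), and the mixed Reidemeister moves (iv). For any fixed global unplugging, such a move is either erased (when the edge it acts on has been deleted) or survives verbatim as a planar isotopy or Reidemeister move on the resulting classical link diagram. In either case the ambient-isotopy class of each unplugged link is unchanged and the correspondence between unplugging choices is the identity, so $\mathcal{L}$ is preserved.

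The crux is the two families of moves that genuinely involve a node: the vertex slide moves (v) and the topological vertex twists (vi) of Figures~\ref{breid2} and~\ref{tvt}, together with the node-sliding contained in the planar isotopies. For a vertex slide I would check the three unplugging choices at the participating node separately: deleting the bond kills the move, while each choice that retains the bond turns the slide into an honest Reidemeister~II move on the resulting classical link, so every unplugged link is preserved up to ambient isotopy; node-sliding is handled the same way, as an ambient isotopy of the junction along a strand. The decisive case is the TVT move: here deleting the bond again erases the twist, whereas retaining the bond (and deleting one incident link edge) merges the bond with the remaining link arc and converts the twist into a single curl on the classical link that the bond now helps form. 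Because the elements of $\mathcal{L}_{(K,B)}$ are taken only up to \emph{ambient} isotopy, this curl is removed by a Reidemeister~I move, and the unplugged link class is unchanged. This is exactly the point at which topological vertex isotopy is used essentially: under rigid vertex isotopy the Reidemeister~I reduction is unavailable, which is why TVT must be replaced by the framing-sensitive RVT of Figure~\ref{rvt} and why unplugging yields only a \emph{topological} vertex invariant.

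Having matched, for each basic move, every unplugging of the ``before'' diagram with an ambient-isotopic unplugging of the ``after'' diagram through a bijection of global choices, I would conclude that $\mathcal{L}_{(K_1,B_1)} = \mathcal{L}_{(K_2,B_2)}$ whenever $(K_1,B_1)$ and $(K_2,B_2)$ are topologically vertex isotopic, which is the first bullet. The second bullet is then immediate: if $\nu$ is any ambient-isotopy invariant of classical links, the multiset of values $\{\nu(L') : L' \in \mathcal{L}_{(K,B)}\}$ depends only on $\mathcal{L}_{(K,B)}$ and is therefore a topological vertex isotopy invariant of $(K,B)$. The main obstacle I anticipate is bookkeeping rather than conceptual: organising the node-local case analysis for the vertex moves so that the bijection of global unplugging choices is displayed cleanly, and checking that the discarded redundant-arc components never merge with genuine link components under any of the moves.
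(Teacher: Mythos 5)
Your proof is correct: the paper does not actually prove this theorem but cites it as a classical result of Kauffman \cite{K}, and your move-by-move verification over the generating set of Proposition~\ref{breidthm} --- with the decisive observation that a TVT unplugs to either nothing (bond deleted) or a single curl absorbed by an ambient-isotopy Reidemeister~I move (bond retained) --- is exactly the standard argument of \cite{K} adapted to the bonded setting. You also correctly pinpoint why the conclusion is specific to topological (not rigid) vertex isotopy, which is the one conceptual point the verification truly hinges on.
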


 Figure~\ref{unplugging} bottom row shows the set of three knots derived by the unplugging of the given bonded knot, seen as an embedded graph.

\begin{figure}[H]
\begin{center}
\includegraphics[width=5.2in]{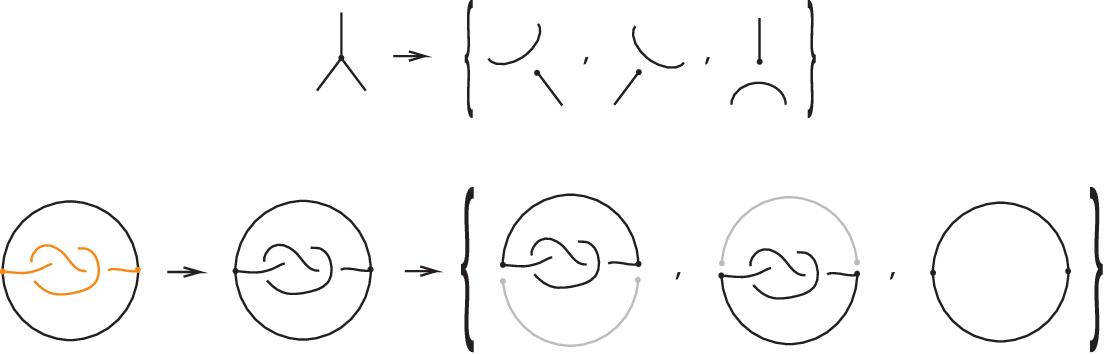}
\end{center}
\caption{Unplugging edges from an embedded graph.}
\label{unplugging}
\end{figure}

We note that we can create finer unplugging invariants for bonded links by restricting unplugging in relation to the bonds. There are two types, the  bonded unplugging and the strict  unplugging, which are complementary:

\begin{definition}
The {\it bonded unplugging} of a bonded link $(K,B)$ is the unplugging where we unplug only the bonds, so we are  left with the {\it underlying link} $K$. 
\end{definition}

It now follows immediately from Theorem~\ref{th:unplugging} that:

\begin{proposition}
    The underlying link $K$ of a bonded link $(K,B)$ is a topological vertex isotopy invariant of $(K,B)$.
\end{proposition}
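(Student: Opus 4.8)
The plan is to deduce this directly from Theorem~\ref{th:unplugging}, which states that the full set $\mathcal{L}_{(K,B)}$ of classical links arising from all vertex unpluggings is a topological vertex isotopy invariant, and that any ambient isotopy invariant applied to $\mathcal{L}_{(K,B)}$ is likewise invariant. The key observation is that the bonded unplugging is itself a particular unplugging operation: at each node we make the specific choice of removing the bond edge rather than one of the two link edges. Carrying this out simultaneously at every node yields precisely the underlying link $K$, with all bonds deleted.

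First I would note that the bonded unplugging is well defined on isotopy classes. Since $\mathcal{L}_{(K,B)}$ is an invariant of $(K,B)$, if $(K_1,B_1)$ and $(K_2,B_2)$ are topological vertex isotopic then their full unplugging sets agree as sets of classical links (up to ambient isotopy). I would then isolate the distinguished element corresponding to unplugging \emph{every} bond edge: this is a canonical choice that can be described intrinsically, independent of any diagram, as ``delete all bond arcs and retain the link $L$.'' Because the isotopy $f: S^3 \to S^3$ realizing topological vertex equivalence carries $L_1$ to $L_2$ and $B_1$ to $B_2$ (by the definition of topological vertex isotopy), restricting attention to the link components shows $f$ carries $K_1$ to $K_2$; hence the underlying links are ambient isotopic.

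The cleanest route, however, is simply to invoke the second bullet of Theorem~\ref{th:unplugging}. Since the underlying link $K$ is the outcome of one specific coherent unplugging choice, the assignment $(K,B) \mapsto K$ factors through $\mathcal{L}_{(K,B)}$ in a canonical way: $K$ is the element of the unplugging set obtained by the uniform ``unplug all bonds'' selection. One must only check that this selection is preserved by the invariance statement, i.e.\ that the correspondence between nodes of $(K_1,B_1)$ and nodes of $(K_2,B_2)$ induced by $f$ matches bond edges to bond edges (which it does, again by the definition of topological vertex isotopy, since $f$ respects the attachment of bonds to link arcs). Therefore the ambient isotopy class of $K$ is determined by the topological vertex isotopy class of $(K,B)$.

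The main obstacle is purely bookkeeping rather than mathematical: Theorem~\ref{th:unplugging} asserts invariance of the \emph{unordered set} $\mathcal{L}_{(K,B)}$, so to extract a single canonical element I must argue that the ``all bonds unplugged'' element is distinguished in a way that survives the passage to the invariant set. This is guaranteed because topological vertex isotopy, by definition, carries bonds to bonds and preserves the bond/link distinction, so the coherent choice of unplugging every bond is respected. Once this labelling point is settled, the conclusion is immediate, and the proposition follows with essentially no computation.
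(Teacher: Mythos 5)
Your proposal is correct and follows essentially the same route as the paper, which deduces the proposition immediately from Theorem~\ref{th:unplugging} by observing that the bonded unplugging (removing exactly the bond edges at every node) yields the underlying link $K$. Your extra care about extracting the canonical ``unplug all bonds'' element from the invariant set, and the backup observation that the defining homeomorphism $f$ already carries $L_1$ to $L_2$, are sound but only make explicit what the paper treats as immediate.
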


The bonded unplugging can distinguish bonded knots. In Figure~\ref{nonisobk} the bonded unplugging yields an unknot in the one case and a trefoil knot in the other case. Hence the two bonded knots are distinguished. In this example with the knotted bond we have an embedding of a $\Theta$-graph. This knotted graph is the Turaev closure of the knotoid depicted by the knotted bond as a diagram in the surface of the 2-dimensional sphere. The fact that we have proved that the $\Theta$-graph has a non-trivial embedding implies that this knotoid is non trivial.

\begin{figure}[H]
\begin{center}
\includegraphics[width=2in]{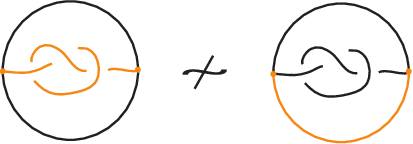}
\end{center}
\caption{The bonded unplugging distinguishes the two bonded knots.}
\label{nonisobk}
\end{figure}

\begin{definition}
The {\it strict unplugging} of a bonded link $(K,B)$ is the unplugging where at a given node only the arcs from the link are allowed to be unplugged.  
\end{definition}

At a 3-valent vertex there are two such choices. It also follows immediately from Theorem~\ref{th:unplugging} that:

\begin{proposition}
    The resulting collection of knots and links (ignoring the unknotted  arcs) from the strict unplugging of a bonded link  is a topological vertex isotopy invariant of the  bonded link.
\end{proposition}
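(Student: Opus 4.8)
The plan is to deduce the statement directly from Theorem~\ref{th:unplugging} by exhibiting the strict unplugging as the restriction of the general unplugging operation to a sub-collection that is intrinsically determined by the bonded link. Recall that, viewing $(K,B)$ as a trivalent embedded graph, every node carries exactly three incident edges: two belonging to the link $K$ and one belonging to a bond. A general vertex unplugging detaches any one of these three edges, whereas the strict unplugging is precisely the operation in which only \emph{link} edges are ever detached. Thus the collection of classical links produced by strict unplugging (after discarding the redundant unknotted arcs) is a distinguished sub-collection $\mathcal{L}^{\mathrm{str}}_{(K,B)}$ of the full set $\mathcal{L}_{(K,B)}$ of Theorem~\ref{th:unplugging}.

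First I would record the one structural fact on which everything rests: the partition of the edge set of the graph into \emph{bond} edges and \emph{link} edges is intrinsic to $(K,B)$ and is respected by every topological vertex isotopy. This is immediate from the definition, since such an isotopy is realised by an orientation-preserving homeomorphism $f\colon S^3\to S^3$ carrying $K_1$ to $K_2$ and $B_1$ to $B_2$ and respecting the attachment of bonds; in particular $f$ sends link arcs to link arcs and bond arcs to bond arcs. Hence the choice ``detach a link edge'' is carried by $f$ to a choice of the same kind, so that $f$ carries every strict unplugging of $(K_1,B_1)$ to a strict unplugging of $(K_2,B_2)$ and never to a bonded unplugging.

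Granting this, the argument of Theorem~\ref{th:unplugging} applies verbatim to the restricted family: each strict unplugging is in particular an admissible vertex unplugging, so the invariance mechanism already guarantees that the corresponding classical links are ambient isotopic before and after the isotopy; and because $f$ preserves edge types it induces a type-respecting bijection between the strict unpluggings of the two diagrams. Therefore $\mathcal{L}^{\mathrm{str}}_{(K,B)}$, regarded as a set of ambient isotopy classes of classical links and ignoring the redundant arcs, depends only on the topological vertex isotopy class of $(K,B)$. The final clause, that any ambient isotopy invariant evaluated on $\mathcal{L}^{\mathrm{str}}_{(K,B)}$ is again an invariant, then follows exactly as in the consequence stated in Theorem~\ref{th:unplugging}.

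I expect there to be no genuine obstacle here; the only point deserving care --- and the one I would foreground --- is the verification that the bond/link labeling of the graph edges is well defined and isotopy invariant, so that ``strict unplugging'' names a canonical sub-collection rather than a presentation-dependent choice. Once this is in place, the proposition is immediate, which is exactly why the paper can assert that it ``follows immediately'' from Theorem~\ref{th:unplugging}.
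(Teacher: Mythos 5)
Your proposal is correct and follows the same route as the paper, which simply asserts that the proposition is an immediate consequence of Theorem~\ref{th:unplugging}; your elaboration that the bond/link edge partition is intrinsic and preserved by any topological vertex isotopy is precisely the (implicit) reason the restriction to strict unpluggings yields a well-defined invariant sub-collection.
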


Figure~\ref{unplugtypes} illustrates an example of a bonded knot $G$ which is trivialized by the bonded unplugging. However, via the strict unplugging we obtain an unknot and a trefoil knot. This proves that the given bonded knot is non-trivial. Also that the $\Theta$-graph where one considers the bond as another edge is non-trivial.  Let further $G_u$ be the same graph where now the upper arc in $G$ is a bond (see left illustration in Figure~\ref{unplugtypes}). If we perform bonded unplugging to $G_u$ we obtain a trefoil knot, therefore $G_u$ is not isotopic to $G$. 

\begin{figure}[H]
\begin{center}
\includegraphics[width=5in]{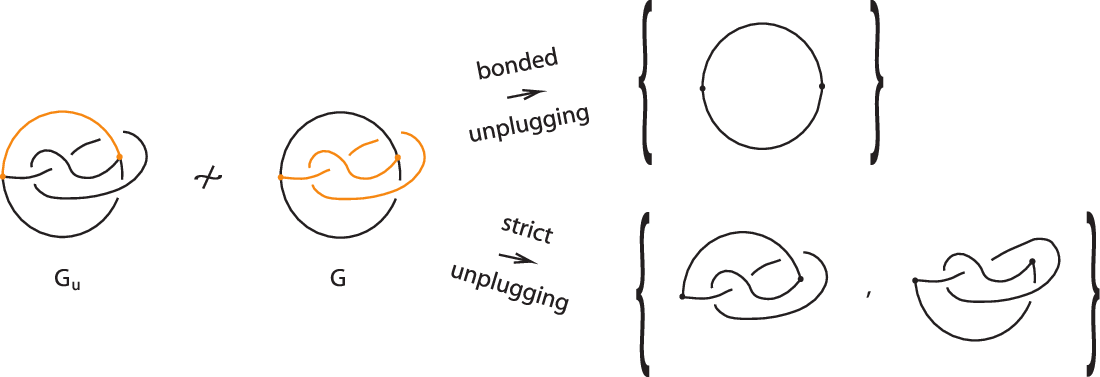}
\end{center}
\caption{Unplugging edges from an embedded graph.}
\label{unplugtypes}
\end{figure}

In the case of $G_l$ where the bond is the lower arc  of $G$, the unplugging invariants do not distinguish $G_l$ and $G$. In fact  $G_l$ and $G$ are isotopic as demonstrated in Figure~\ref{exisob}.

\begin{figure}[H]
\begin{center}
\includegraphics[width=4.3in]{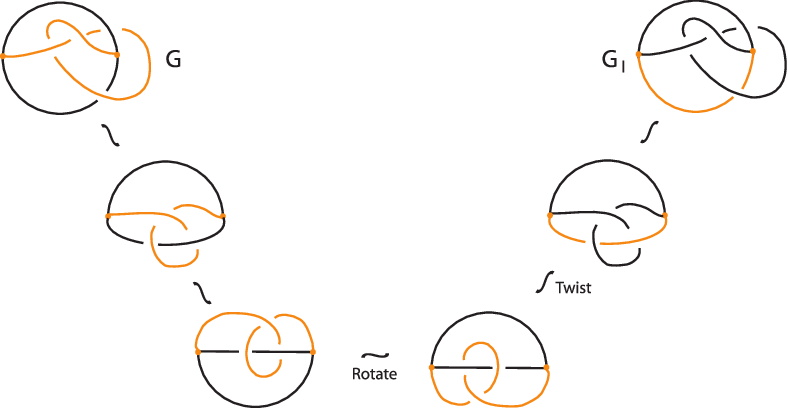}
\end{center}
\caption{Two isotopic bonded knots.}
\label{exisob}
\end{figure}


\subsection{Rigid Standard Bonded Links: Tangle Insertions}

Given a standard bonded link $(K,B)$, one may replace each bond by a properly embedded band. Then, at each band we  insert any classical 2–tangle. This process is illustrated in Figure~\ref{tangleinsert}. The resulting set of classical links for all possible choices of inserted 2–tangles, depends only on the rigid vertex isotopy class of $(K,B)$. 

\begin{figure}[H]
\begin{center}
\includegraphics[width=4.3in]{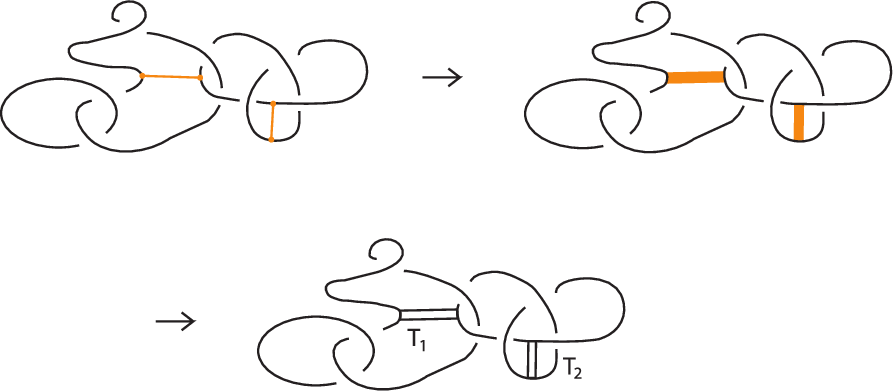}
\end{center}
\caption{Band replacements and tangle insertions.}
\label{tangleinsert}
\end{figure}

More precisely, given a rigid standard bonded link diagram $(L,B)$, we proceed as follows:
\begin{itemize}
    \item Replace each bond by a properly embedded band connecting its two nodes.
    \item Into each band, insert a properly embedded $2$–tangle chosen from a fixed family of tangles.
    \item The result is a classical link diagram whose isotopy class depends only on the rigid isotopy class of $(L,B)$ and the choice of tangles.
\end{itemize}
    
We now state the following result about tangle insertion, also due to Kauffman \cite{K} but adapted to the setting of rigid bonded links. Originally the tangle insertion would take place in the disc of a rigid vertex. Here the rigid vertex is replaced by a bond.  
The earliest example of tangle insertion is the replacement of a singular crossing by classical crossings in the theory of Vassiliev invariants.

\begin{theorem}\label{thm:tangins}
Let $(K,B)$ be a standard bonded link. Replace the bonds with properly embedded discs in the form of bands. Then:
\begin{itemize}
    \item The set $\mathcal{T}_{(K,B)}$ of classical links obtained from all possible tangle insertions is a rigid vertex isotopy (resp. rigid vertex regular isotopy) invariant of $(K,B)$. 
    \item For any fixed choice of tangles, the ambient (resp. regular) isotopy class of the resulting classical link $L_{(K,B)}$ is a rigid vertex  (resp. rigid vertex regular) isotopy invariant of $(K,B)$.
\end{itemize}
\end{theorem}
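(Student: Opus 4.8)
The plan is to prove Theorem~\ref{thm:tangins} by reducing the statement about tangle insertions to a statement about the local moves that generate rigid vertex isotopy, which were already catalogued in Proposition~\ref{breidthm} and specialized to the standard bonded category in Theorem~\ref{reid_standard}. The key observation is that a tangle insertion is a well-defined operation precisely because we first replace each bond by a \emph{rigidly embedded band} (a thickened disc), so that the two endpoints of any inserted $2$-tangle are attached to the band boundary in a fixed cyclic order. The whole proof rests on checking \emph{invariance of the construction under the generating moves}, which is the standard strategy for diagrammatic invariants.

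First I would make the construction precise: fix, once and for all, a family $\mathcal{F}$ of classical $2$-tangles, and for each standard bonded diagram $D$ of $(K,B)$ let $\Phi_{\mathcal{F}}(D)$ denote the collection of classical link diagrams obtained by replacing every bond by its band and slotting in, independently at each band, every tangle from $\mathcal{F}$. I would define $\mathcal{T}_{(K,B)}$ as the set of ambient (resp. regular) isotopy classes of the members of $\Phi_{\mathcal{F}}(D)$. To prove this is a rigid vertex invariant I would verify that $\mathcal{T}$ is unchanged when $D$ is altered by each rigid vertex generating move: the classical Reidemeister moves R1, R2, R3 on link arcs (Figure~\ref{breid1}), which do not touch any band and hence commute with insertion; the standard bonded moves of Figure~\ref{rigidiso_reg}; and crucially the rigid vertex twists RVT of Figure~\ref{rvt}. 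For the first two families the insertion region is disjoint from the locus of the move, so invariance is immediate once one notes that the inserted tangle can be carried rigidly along. The content of the theorem is therefore concentrated in the RVT move.

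The RVT move is the hard part, and it is exactly where the rigid-versus-topological distinction bites. Under an RVT the band, together with whatever tangle has been inserted, is rotated or carried as a rigid disc around a strand; I must show that for each fixed tangle $T \in \mathcal{F}$, the link obtained before the RVT and the link obtained after it are ambient (resp. regular) isotopic as classical links. The mechanism is that the rigidly-embedded band transports the inserted tangle \emph{as a block}, so the RVT move on the bonded diagram becomes, after insertion, a sequence of ordinary Reidemeister moves applied to the boundary strands of the tangle together with a rigid motion of the tangle itself; no strand ever has to pass through the interior of $T$, which is what would be forbidden. This is precisely the point at which one invokes the adaptation of \cite[Section III]{K}: the rigid vertex graph calculus guarantees that rigid vertex isotopy of the graph corresponds, after any fixed rigid filling of the vertices, to classical isotopy of the filled diagram. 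I expect that carrying this out honestly requires drawing the band-with-tangle through the RVT picture and exhibiting the Reidemeister sequence explicitly, taking care that for the \emph{regular} isotopy statement one does not accidentally introduce or cancel a curl (an R1), so that the writhe-type bookkeeping is respected; this framing/writhe control is the only genuinely delicate bookkeeping in the argument.

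Finally, the two bulleted conclusions follow formally once invariance of the generating step is established. The first bullet is the statement that the whole set $\mathcal{T}_{(K,B)}$ is preserved, which is immediate since each generating move induces a bijection of $\Phi_{\mathcal{F}}(D)$ with $\Phi_{\mathcal{F}}(D')$ compatible with the isotopy classes tangle-by-tangle. The second bullet, that for a \emph{single} fixed choice of tangle at each band the resulting class $L_{(K,B)}$ is itself an invariant, is the tangle-by-tangle refinement of the same bijection: since the generating move acts on the chosen tangle's insertion without changing which tangle sits in which band, the individual class is carried to the corresponding individual class. The parenthetical ``regular isotopy'' versions are obtained by running the identical argument while tracking framings and declaring only R2, R3 (and their bonded analogues) as the permitted moves, which is exactly the regular-isotopy refinement used for the bonded bracket later in the section.
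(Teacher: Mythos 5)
Your proposal is correct and is essentially the argument the paper intends: the paper gives no written proof of Theorem~\ref{thm:tangins}, simply citing Kauffman \cite{K} and remarking that the rigid vertex is replaced by a band, and the content of that citation is exactly your move-by-move verification over the generating moves of rigid vertex isotopy, with the RVT/band-transport case as the crux and the R1/writhe bookkeeping handling the regular-isotopy variant. The only wording to tighten is the claim that for the bonded moves of Figure~\ref{rigidiso_reg} ``the insertion region is disjoint from the locus of the move'': for a vertex slide a link strand does sweep across the band, and the correct justification (which you in effect supply) is that passing a strand entirely over or entirely under the band-with-tangle is a composite of classical R2 and R3 moves, hence harmless in both the ambient and regular settings.
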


\begin{remark} \rm
Evaluating the resulting classical link diagram using any ambient or regular isotopy invariant (such as the Jones polynomial, the HOMFLY polynomial, the bracket polynomial, etc.) yields an invariant of the original rigid tight bonded link. 
Since the choice of tangles is arbitrary, and the choice of classical invariants is also arbitrary, this method generates infinitely many distinct invariants of rigid bonded links. In practice, one typically fixes a family of tangles and a preferred classical invariant to construct an invariant adapted to the application at hand.
\end{remark}

\begin{remark}
The tangle insertion technique is specific to the rigid vertex category, since in the topological category the nodes are allowed to rotate, making the inserted tangles ill-defined under isotopy. Thus, tangle insertion  is a natural tool for studying rigid bonded links.
\end{remark}

\subsection{Rigid Tight Bonded Links: The Bonded Bracket Polynomial}

In this subsection we present the \emph{bonded bracket polynomial}, which extends the Kauffman bracket polynomial to rigid tight  bonded links by assigning specific skein-theoretic relations at bonds. This invariant arises by choosing a particular $2$–tangle insertion at each bond, corresponding to a fixed linear combination of possible $2$–tangle states, weighted by formal coefficients. The resulting classical diagram (obtained after resolving all bonds via this prescribed $2$–tangle insertion) is then evaluated using the usual Kauffman bracket polynomial.

We emphasize that the construction described below uses one specific choice of $2$–tangle insertion at each bond. In principle, different choices of $2$–tangle insertions give rise to different invariants. Indeed, one can think of this invariant as a representative example of a much broader class of invariants defined by varying the $2$–tangle substitutions at bonds. Exploring how powerful this family of invariants is remains an open problem.

\begin{definition}\label{BKauff}\rm
Let $L$ be a tight bonded link diagram. The \emph{bonded bracket polynomial} of $L$, denoted $\langle L \rangle$, is defined inductively via the skein relations illustrated in Figure~\ref{bkauf}, where $a$ and $b$ are formal coefficients.
\end{definition}

\begin{figure}[H]
\begin{center}
\includegraphics[width=3.3in]{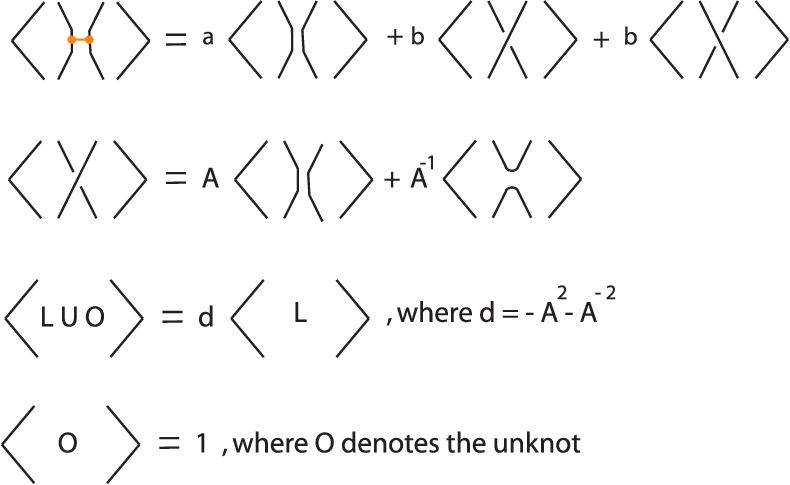}
\end{center}
\caption{Skein relations defining the bonded bracket polynomial.}
\label{bkauf}
\end{figure}

We now show that this polynomial is invariant under {\it regular isotopy}  (excluding move R1) for tight rigid bonded links.

\begin{proposition}\label{kaufprop}
The bonded bracket polynomial is an invariant of regular rigid vertex isotopy in the category of tight rigid bonded links.
\end{proposition}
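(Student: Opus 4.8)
```latex
\textbf{Proof proposal.}
The plan is to show that the bonded bracket $\langle L\rangle$ is unchanged under each of the generating moves of regular isotopy for tight rigid bonded links, namely the moves listed in Theorem~\ref{tightequiv} (Figures~\ref{breid1} and~\ref{tightiso_rigid}) \emph{excluding} the R1 move on link arcs. The strategy is the standard skein-theoretic one used for the classical Kauffman bracket: the bonded bracket is defined by a local linear expansion at each bond (Figure~\ref{bkauf}) that resolves every bond into a fixed linear combination of classical $2$--tangle states with coefficients $a$ and $b$, after which the diagram is evaluated by the ordinary Kauffman bracket. Because resolving the bonds produces an honest classical link diagram (a $\mathbb{Z}[a,b]$--linear combination of such diagrams), I can leverage the known invariance properties of the classical bracket and only need to check the moves that genuinely involve bonds.

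First I would dispose of the purely classical part. Any move from Figure~\ref{breid1} other than R1 (that is, R2 and R3 acting on link arcs away from bonds) takes place in a region containing no bonds, so after the bond expansion it becomes a classical R2 or R3 move applied uniformly to every state in the linear combination; invariance then follows term-by-term from regular-isotopy invariance of the classical Kauffman bracket. The reason R1 must be excluded is exactly the reason it is excluded classically: a curl contributes a factor $-A^{\pm 3}$, which is why we only claim \emph{regular} isotopy invariance.

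Next I would treat the genuinely bonded moves of Figure~\ref{tightiso_rigid}. Here the key point is that in the \emph{tight} category no link arc crosses a bond and each bond sits in an arc-free H-neighborhood (Definition~\ref{def:tight}), so the only moves to verify are those permuting or sliding tight bonds and their H-neighborhoods, together with the rigid vertex twists adapted to the tight setting. For each such move I would apply the bond-resolving relation of Figure~\ref{bkauf} to both sides, obtaining two $\mathbb{Z}[a,b]$--linear combinations of classical diagrams, and then check that the two combinations are matched state-by-state by classical regular-isotopy moves (R2 and R3). The rigidity of the vertex is essential here: because the bond is resolved into a \emph{fixed} oriented $2$--tangle determined by its H-neighborhood, the coefficients $a$ and $b$ attach to well-defined local pictures and are not scrambled by rotating a node (this is precisely the obstruction noted in the remark following Theorem~\ref{thm:tangins}).

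The main obstacle I expect is the verification for the rigid vertex twist move: after the bond expansion, a twist introduces curls on the two link strands adjacent to the bonding sites, and one must confirm that the $-A^{\pm 3}$ factors produced by these curls either cancel in the weighted combination or are absorbed consistently by the choice of skein coefficients $a,b$ in Figure~\ref{bkauf}. Equivalently, one must check that the specific linear combination defining $\langle L\rangle$ is an eigenvector of the classical curl operator on the $2$--tangle states with compatible eigenvalues on the two states, so that the move preserves $\langle L\rangle$ up to the overall normalization already allowed under regular isotopy. If the two states picked out by $a$ and $b$ are not curl-compatible, one would have to restrict to regular isotopy on the bond as well, which is consistent with the statement of the Proposition; I would make this compatibility explicit by writing out the expansion of both sides of the twist move and comparing coefficients, which is the one computation I would carry through in full.
```
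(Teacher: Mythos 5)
Your proposal is correct in outline and begins exactly as the paper does: resolve each bond via the skein relation of Figure~\ref{bkauf} into a $\mathbb{Z}[a,b]$-linear combination of classical diagrams, and dispose of R2 and R3 on link arcs term-by-term using regular-isotopy invariance of the classical bracket; your points about why R1 is excluded and why rigidity is needed to keep the coefficients attached to well-defined local states are also on target. Where you genuinely diverge is in the treatment of the bond-involving moves. You propose a move-by-move verification, expanding both sides of each move of Theorem~\ref{tightequiv} and matching states by classical regular isotopy, with the rigid vertex twist singled out as the computation to carry through. The paper instead checks only the node R2-type move directly by the skein relations (Figure~\ref{k2}) and then handles all remaining rigid-vertex moves in one stroke by citing Theorem~\ref{thm:tangins}: the skein expansion is a fixed tangle insertion (identity tangle, positive crossing, or negative crossing at each bond), so each of the resulting $3^k$ classical diagrams is already a regular rigid-vertex isotopy invariant of $(L,B)$, and applying the classical bracket to the weighted sum is then automatically invariant. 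Your route is more self-contained (it does not lean on the tangle-insertion theorem, which the paper imports from Kauffman without reproof) but considerably more laborious; the paper's is shorter at the cost of that dependency. One caution on your final hedge: the option of "restricting to regular isotopy on the bond as well" if the curl factors fail to cancel is not actually available to you, since the Proposition as stated asserts invariance under all moves of the tight rigid category except R1 on link arcs, which includes the rigid vertex twist. Fortunately the deferred computation does close --- the twist realizes a regular isotopy of each resolved state, since each of the three inserted tangles is carried to itself --- so no weakening of the statement is needed.
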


\begin{proof} 
In the absence of bonds, the bonded bracket polynomial coincides with the classical Kauffman bracket polynomial, and thus is invariant under the classical Reidemeister moves R2 and R3. For the moves involving bonds, invariance under the local moves analogous to R2 at the nodes is verified directly using the skein relations, as illustrated in Figure~\ref{k2}.  

\begin{figure}[H]
\begin{center}
\includegraphics[width=4.3in]{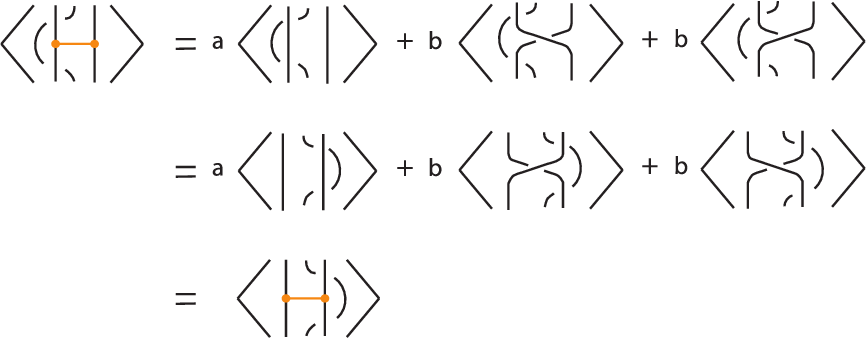}
\end{center}
\caption{Invariance under a vertex slide move.}
\label{k2}
\end{figure}

 For justifying rigid vertex  isotopy invariance we argue as follows. The bonded skein relation replaces each bond by a trivial 2-tangle or a positive or a negative type crossing. In the end we obtain a set of $3^k$ link diagrams for $k$ bonds. By Theorem~\ref{thm:tangins} this set of links and each one separately is a rigid vertex isotopy (resp. regular rigid vertex  isotopy)  invariant of the original  tight bonded link diagram $L$. Therefore, applying the Kauffman bracket polynomial to each one will retain the regular rigid vertex  isotopy invariance.  
\end{proof}

To illustrate the bonded bracket polynomial, we explicitly compute it for three  examples where, for simplicity, we set $b=1$.

\begin{example}\label{e1}\rm
Consider the unknot equipped with $n$ bonds, denoted $U_n$. 
\begin{figure}[H]
\begin{center}
\includegraphics[width=1.6in]{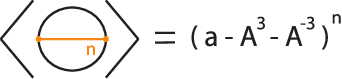}
\end{center}
\caption{The unknot with $n$ bonds.}
\label{ex1}
\end{figure}
Then:
\[
\langle U_n \rangle = \left(a - A^3 - A^{-3}\right)^n.
\]
The proof follows by induction, as illustrated in Figures~\ref{ex1} and~\ref{ex1b}, taking into account that the addition of a local curl to a bonded diagram by an R1 move multiplies the bracket polynomial of the diagram by a factor of $-A^3$ if the curl is right-handed and $-A^{-3}$ if the curl is left-handed. 
\end{example}

\begin{figure}[H]
\begin{center}
\includegraphics[width=4.5in]{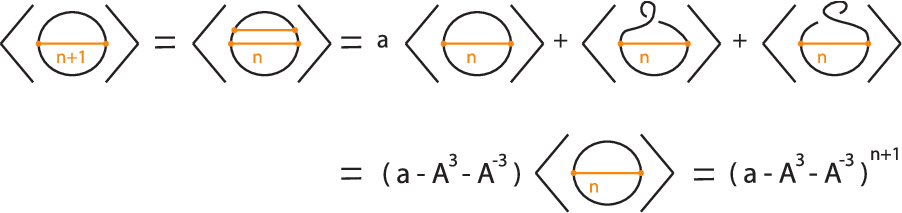}
\end{center}
\caption{Induction step for computing $\langle U_n \rangle$.}
\label{ex1b}
\end{figure}

\begin{example}\label{example1}\rm
Let $K_n$ denote the two–component unlink with $n$ bonds between the components (Figure~\ref{ex2}).
\begin{figure}[H]
\begin{center}
\includegraphics[width=.8in]{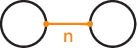}
\end{center}
\caption{The 2-component unlink with $n$-bonds between the two components.}
\label{ex2}
\end{figure}
Then we have the following:
\begin{equation}\label{unl}
\begin{array}{lcl}
\langle K_n \rangle & = & \left(a + A + A^{-1} \right)^{n-1} \left(ad -A^3 -A^{-3}\right)\, +\\
&&\\
&+&  \left(A+A^{-1}\right)\underset{i=1}{\overset{n-1}{\sum}}\, \left(a+A+A^{-1} \right)^{n-i-1} \,  \left(a-A^3-A^{-3}\right)^{i}
\end{array}
\end{equation}
\noindent where $d\, =\, -A^2-A^{-2}$. 
\smallbreak
We prove relations (\ref{unl}) by induction on $n\in \mathbb{N}\backslash \{0\}$. For $n=1$ we have that:
\begin{figure}[H]
\begin{center}
\includegraphics[width=3.7in]{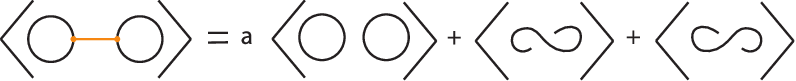}
\end{center}
\label{ex2b}
\end{figure}
\noindent Hence, $\langle K_1 \rangle =a\, d\, -\, A^3\, -\, A^{-3}$. Assume that relations (\ref{unl}) hold for $n$. Then, for $n+1$ we have:
\[
\begin{array}{ccl}
\langle K_{n+1} \rangle  & = & \left(a+A+A^{-1}\right)\, \langle K_n \rangle\, +\, (A+A^{-1})\, \langle U_n \rangle \ \overset{Ex.~\ref{e1}}{\underset{{\rm ind.\, step}}{=}}\\
&&\\
& = & \left(a + A + A^{-1} \right)^{n} \left(ad -A^3 -A^{-3}\right)\, +\, \left(A+A^{-1}\right)\left(a-A^3-A^{-3}\right)^n\, +\\
&&\\
& + & \left(A+A^{-1}\right)\underset{i=1}{\overset{n-1}{\sum}}\, \left(a+A+A^{-1} \right)^{n-i} \,  \left(a-A^3-A^{-3}\right)^{i}\, =\\
&&\\
&=& \left(a + A + A^{-1} \right)^{n} \left(ad -A^3 -A^{-3}\right)\, +\, \\
&&\\
& + & \left(A+A^{-1}\right)\underset{i=1}{\overset{n}{\sum}}\, \left(a+A+A^{-1} \right)^{n-i} \,  \left(a-A^3-A^{-3}\right)^{i}\\
\end{array}
\]
\end{example}

\begin{example}\label{example3}\rm 
In this example we consider in Figure~\ref{T1T2} two versions of bonded trefoil knots $T_1$ and $T_2$. In the figure the bonds are represented by curly black arcs.

\begin{figure}[H]
\begin{center}
\includegraphics[width=2.3in]{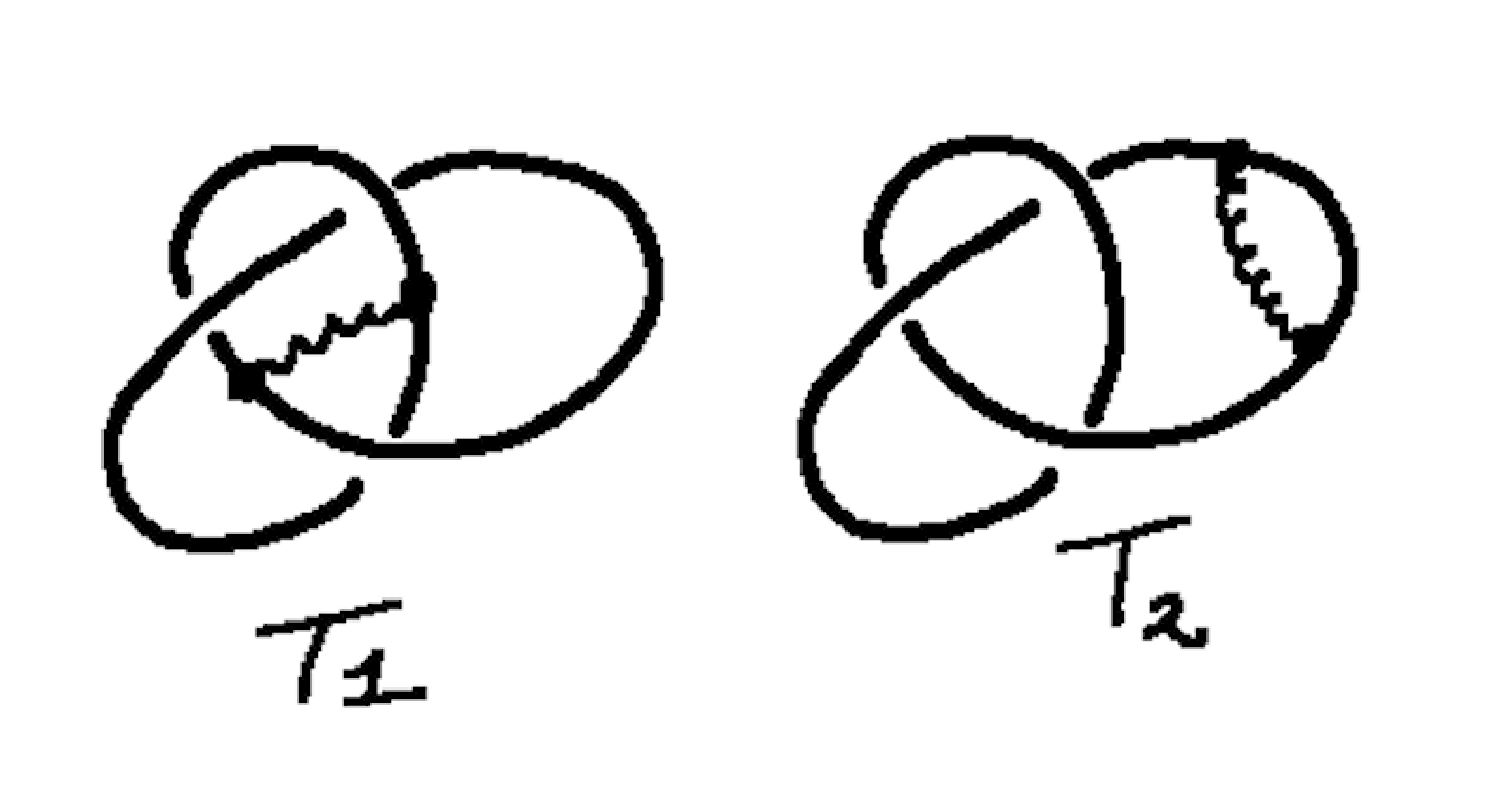}
\end{center}
\caption{Two bonded trefoils.}
\label{T1T2}
\end{figure}

By expanding the crossings in these examples via the Kauffman bracket identity, we find that
$$\langle T_1 \rangle = (A^{3} d + 3 A^{2} B + A B^{2} d) \langle \Theta_{1} \rangle + (2A B^{2} + B^{3} d) \langle D_{1} \rangle $$ 
and
$$ \langle T_2 \rangle =(A^{3} d + 3A^{2}B + AB^{2} + 2AB^{2} d + B^{3} d^{2}) \langle \Theta_{1} \rangle.$$
Here we use $B = A^{-1}$ and $d = -A^{2} - A^{-2}$. One can then see that $\langle T_1 \rangle$ and $\langle T_2 \rangle$ are sufficiently different to verify that $T_1$ and $T_2$ are distinct rigid vertex bonded knots. In fact, we specifically calculate that
$$\langle T_1 \rangle = 1 + 1/A^8 + a/A^7 + 1/A^6 - 1/A^4 - a/A^3 - A^4 - a A^5 + A^8$$ 
and that
$$\langle T_2 \rangle = -1/A^{10} + a/A^7 - 2/A^4 - 1/A^2 + a/A + a A - A^4 - a A^5 + A^{8}.$$
This shows explicitly that the two bonded knots are not rigid vertex isotopic.
\end{example}

We note that the highest power of $a$ appearing among all monomials in the expansion of the bonded bracket polynomial of a bonded link diagram equals the total number of bonds. 
Finally, we note that  invariants defined for standard bonded links extend to tight bonded links, and, in some cases, they become simpler to compute due to the absence of crossings between bonds and  link arcs.


\section{Bonded Braids}\label{sec6:bondedbraids}

In this section we develop the theory of \emph{bonded braids}, an algebraic framework that extends classical braids by including bonds. Note that we restrict our attention to the standard and tight categories of bonded links, undergoing topological vertex isotopy. Combined with the braiding result of next section, this algebraic structure could be used in encoding the topological structures of bonded links and of objects that they model.

We recall that a {\it geometric braid} on $n$ strands is a homeomorphic image of $n$ arcs in the interior of  $[0,1] \times [0,\epsilon] \times  [0,1]$, $\epsilon>0$, such that it is monotonous, that is, there are no local maxima or minima, and the boundary of the image consists in $n$ numbered points in $[0,1] \times [0,\epsilon] \times  \{0\}$ and $n$ corresponding numbered points in $[0,1] \times [0,\epsilon] \times \{1\}$. We study braids through their diagrams in the plane $[0,1] \times \{0\} \times  [0,1]$, which are also called braids. Formally, a braid is described by its braid word in the \emph{braid group} $B_n$, which has generators $\sigma_1, \sigma_2, \ldots, \sigma_{n-1}$ and relations:
\[\sigma_i \sigma_k = \sigma_k \sigma_i \quad |i-k|>1, \qquad \sigma_i \sigma_{i+1} \sigma_i = \sigma_{i+1} \sigma_i \sigma_{i+1} \quad i = 1, \dots, n-2\] 

The relationship between knots and braids is established through the Alexander theorem \cite{A}, which states that every oriented knot or link can be represented isotopically  as the closure of some braid. The {\it (standard) closure} of a geometric braid comprises in joining with simple arcs the corresponding endpoints, and it gives rise to an {\it oriented} knot or link. Figure~\ref{stclosure} illustrates an example of a braid on the left and its closure on the right. The braid on the left is represented by the braid word $\sigma_2 \sigma_1^{-1} \sigma_2^{-1} \sigma_1 \sigma_2^{-1}$, where the generators $\sigma_i$ and their inverses correspond to single crossings of adjacent strands.

\begin{figure}[H]
\begin{center}
\includegraphics[width=2.7in]{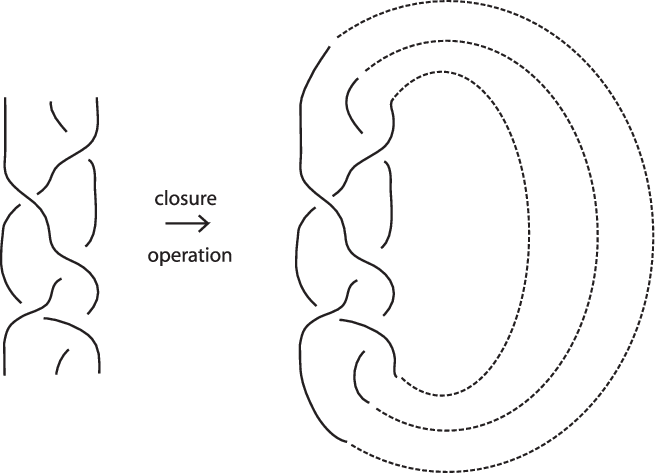}
\end{center}
\caption{The  braid $\sigma_2 \sigma_1^{-1} \sigma_2^{-1} \sigma_1 \sigma_2^{-1}$ and its closure.}
\label{stclosure}
\end{figure}

To determine whether two braids yield isotopic knots or links via their closures, Markov's theorem provides the necessary and sufficient conditions \cite{M}. It states that two braids have equivalent closures if and only if they are related by a finite sequence of:
\begin{itemize}
\item Conjugation: \(\beta \sim \gamma \beta \gamma^{-1}, \quad \beta, \gamma \in B_n\); 
\smallbreak
\item Stabilization move: \(\beta \sim \beta \sigma_n^{\pm 1} \in B_{n+1}, \quad \beta \in B_{n}\).
\end{itemize}
In \cite{LR1} an 1-move analogue of the Markov theorem is formulated using the {\it $L$-moves}, which generalize the stabilization moves and also generate conjugation.

\subsection{Bonded braid definition}\label{sec:bbraids}

In this subsection, we introduce the notion of bonded braids, which are classical braids equipped with bonds. The standard closure operation for bonded braids produces bonded links (see Figure~\ref{closure}). Bonded braids extend the classical braid framework and provide an algebraic structure for studying bonded links. To establish this correspondence, we define bonded braid isotopy, which preserves the bond structure under standard braid moves. Using this framework, we prove the analogue of the Alexander theorem for bonded links. 
\smallbreak
We begin with the formal definition of bonded braids and their isotopy before presenting the braiding algorithm and proof of the Alexander theorem.

\begin{definition}\rm
A {\it bonded braid on $n$ strands} is a pair $(\beta, B)$ of a classical braid $\beta$ on $n$ strands, and a set of $k$ disjoint, embedded horizontal simple arcs, called {\it bonds}. The boundary points of a bond, called {\it nodes}, cannot coincide with endpoints of the braid or with other nodes, and they have local neighborhoods that are 3-valent graphs with the attaching arcs, like $\vdash$, forming together an \texttt{H}-neighborhood, as depicted in Figure~\ref{HN}.
 A bond joining the $i^{th}$ and $j^{th}$ strands with $i<j$ is denoted $b_{i,j}$ and it threads transversely through the strands in between. By abuse of notation, we denote by $b_{i,j}$  a bond joining the $i^{th}$ and $j^{th}$ strands with any sequence of overpasses or underpasses.   A bond $b_{i, i+1}$  joining two consecutive strands $i$ and $i+1$ shall be called {\it elementary bond}, and will be denoted by $b_i$. If $B=\emptyset$, then $(b,\emptyset)$ is just a classical braid. 
 Moreover, a {\it bonded braid diagram} (also referred to as bonded braid) is a regular projection of a bonded braid in the plane $[0,1] \times \{0\} \times  [0,1]$ with over/under information at every crossing, and such that no crossing is horizontally aligned with a bond. So, $b_{i,j}$ can be encoded unambiguously by a sequence of $j-i+1$ {\it o}'s and {\it u}'s indicating the types of crossings formed between the bond and any strand in between. 
 For an example see left-hand side illustration of Figure~\ref{closure}.  A configuration of a bond which passes either over or under all its intermediate strands, such that all crossings are marked `over'/`under', is a {\it uniform over/under}. 
\end{definition}

\begin{figure}[H]
\begin{center}
\includegraphics[width=2.7in]{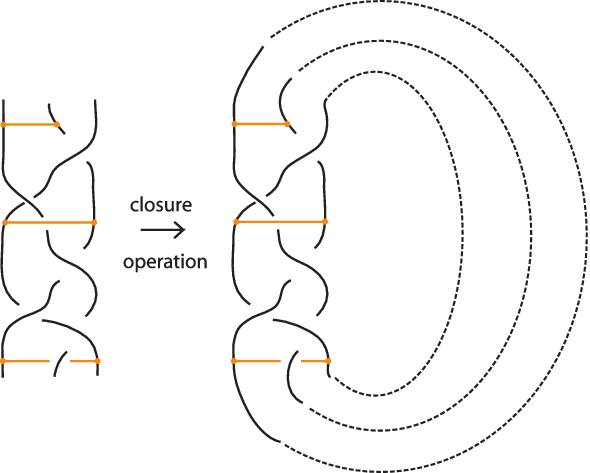}
\end{center}
\caption{A bonded braid and the closure operation.}
\label{closure}
\end{figure}

\subsection{Bonded braid isotopy}

To define bonded braid isotopy, we extend the classical braid isotopy by introducing additional moves that account for the presence of bonds. These moves reflect the topological nature of bonds and their interactions with each other and with crossings and other strands in the braid. The first such move is bonded planar isotopy, induced by the {\it bonded braided $\Delta$-moves} illustrated in Figure~\ref{delta}, where the downward orientation of the strands is preserved. 

\begin{figure}[H]
\begin{center}
\includegraphics[width=1.3in]{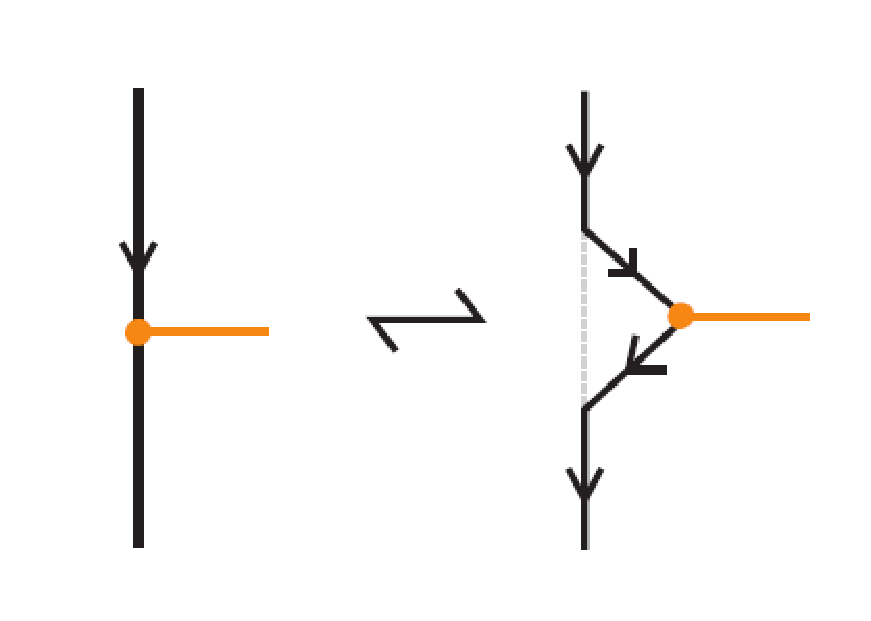}
\end{center}
\caption{A bonded braid planar isotopy move.}
\label{delta}
\end{figure}

The {\it interaction between two bonds} depends on their relative positions and the sequence of crossings they form with the strands in between. More precisely:

\begin{itemize}
\item If two bonds $b_{i,j}$ and $b_{k,l}$ are sufficiently far apart so that their threading paths do not overlap or interact, that is $j<k$, they commute freely. For an example see Figure~\ref{twobonds3}.

\begin{figure}[H]
\begin{center}
\includegraphics[width=3.3in]{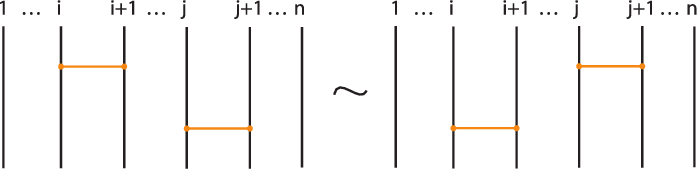}
\end{center}
\caption{Interactions between two distant bonds.}
\label{twobonds3}
\end{figure}

\item Two bonds  $b_{i,j}$ and $b_{k,l}$ also commute if $i<k<l<j$ and $b_{i,j}$ is a uniform over/under, that is, it passes either over or under all its intermediate strands, which include the set of intermediate strands of $b_{k,l}$. In this case, the configuration of the sequence of {\it u}'s and {\it o}'s of the bond $b_{k,l}$ does not matter.  For an example view Figure~\ref{twobonds}.

\begin{figure}[H]
\begin{center}
\includegraphics[width=4.5in]{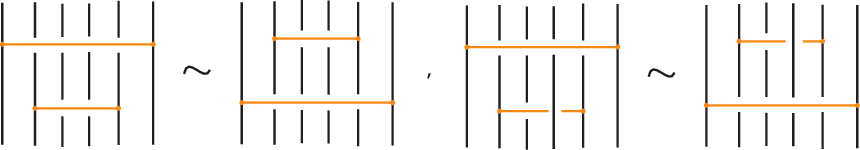}
\end{center}
\caption{Interactions between two bonds: uniform over configurations.}
\label{twobonds}
\end{figure}

\item Furthermore, two bonds $b_{i,j}$ and $b_{k,l}$ commute if $i<k<l<j$,   the sequence of crossings of $b_{k,l}$ coincides with a subsequence  of the sequence of crossings of $b_{i,j}$ for the strands $k,l$ and the markings in the bigger sequence before and after the subsequence agree, i.e. they are both `over' or both `under'.  For example, if $b_{k,l}$ forms the sequence $(o,o,u,o)$ with its intermediate strands, the  bond $b_{i,j}$ must contain either the subsequence $[o,(o,o,u,o),o]$ or the subsequence $[u,(o,o,u,o),u]$ for the bonds to commute. We call this configuration the {\it matching crossing sequences}. For an illustration see Figure~\ref{twobonds2}.

\begin{figure}[H]
\begin{center}
\includegraphics[width=2.2in]{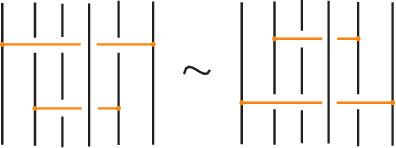}
\end{center}
\caption{Interactions between two bonds: matching crossing sequences.}
\label{twobonds2}
\end{figure}
\end{itemize}

Any {\it interaction between a bond and an arc } is a result of the braided vertex slide moves exemplified in Figure~\ref{regulartotight1}.

\begin{figure}[H]
\begin{center}
\includegraphics[width=4in]{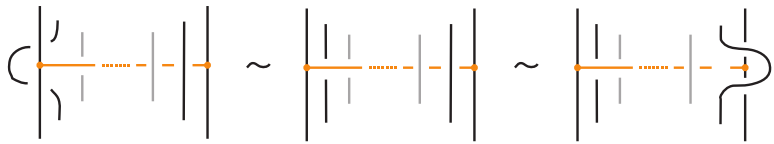}
\end{center}
\caption{Braided vertex slide moves.}
\label{regulartotight1}
\end{figure}

An {\it interaction between a bond  and a crossing} depends on the relative positions of the bond and the strands forming the crossing. More precisely:

\begin{itemize}
\item If the strands $i, i+1$ forming the crossing $\sigma_i$ do not interact with the bond $b_{k,l}$, i.e. $k>i+1$ or $l<i$, then the bond and the crossing commute freely (see Figure~\ref{bondsrel2} for an example).

\begin{figure}[H]
\begin{center}
\includegraphics[width=2.8in]{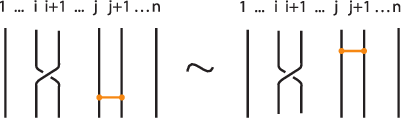}
\end{center}
\caption{A crossing commuting with a bond.}
\label{bondsrel2}
\end{figure}

\item If the  nodes of the bond lie on two strands that form a crossing, the bond and the crossing commute, i.e. $b_i$ commutes with $\sigma_i$  (see Figure~\ref{bondsrel2b}). This is the {\it bonded flype move}. 

\begin{figure}[H]
\begin{center}
\includegraphics[width=2.9in]{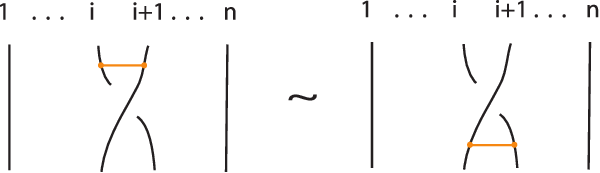}
\end{center}
\caption{A crossing between strands that contain the nodes of a bond.}
\label{bondsrel2b}
\end{figure}

\item A bond $b_{k,l}$ and a crossing $\sigma_i$ commute if the bond passes entirely over or entirely under both strands that form the crossing. We call this configuration, the {\it uniform position of the bond}. For an illustration of this case, see the left-hand side of Figure~\ref{bondcros}. 
\smallbreak
\item A bond and a crossing also commute if the over-strand of the crossing lies  above the bond and the under-strand lies below it. In this case, the bond effectively `threads through' the crossing without disrupting its configuration. We refer to this move as the {\it bonded R3 move}. For an illustration of this case, see the right-hand side of Figure~\ref{bondcros}. 

\begin{figure}[H]
\begin{center}
\includegraphics[width=4.5in]{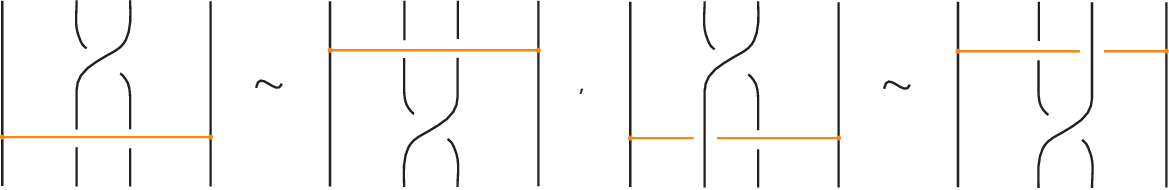}
\end{center}
\caption{A bond passing over a crossing and a bonded R3 move.}
\label{bondcros}
\end{figure}

\item Figures~\ref{bondcros1}, \ref{bondrel2} illustrate the cases where one node of the bond lies on a strand, resp. on two stands, that crosses (resp. cross) another strand. In the instance of Figure \ref{bondrel2} there may be other strands threading through the bond, as long as they all lie under the overcrossing strand.

\begin{figure}[H]
\begin{center}
\includegraphics[width=4.3in]{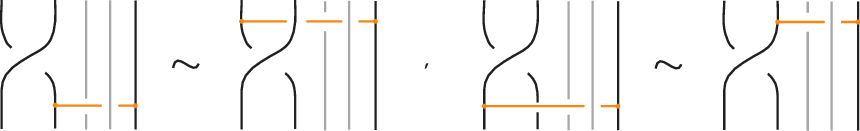}
\end{center}
\caption{Interactions between a crossing and a bond with one node on a strand of the crossing.}
\label{bondcros1}
\end{figure}

\begin{figure}[H]
\begin{center}
\includegraphics[width=3in]{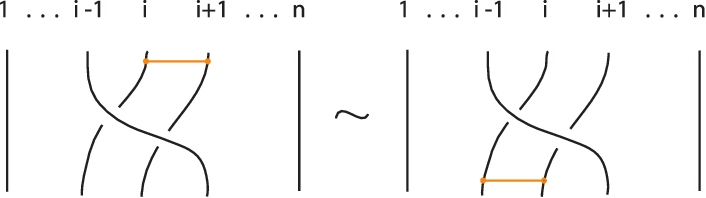}
\end{center}
\caption{A braid strand passing over a bond.}
\label{bondrel2}
\end{figure}
\end{itemize}

\begin{lemma}\label{lem:basicbbmove}
A move between a crossing and a bond with one node on a strand of the crossing follows from a move where a braid strand passes over or under a bond, together with the other moves, and vice versa.
\end{lemma}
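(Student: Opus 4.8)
The plan is to use the braided vertex slide moves (Figure~\ref{regulartotight1}) as the bridge between the two configurations, supplemented by the bonded flype move (Figure~\ref{bondsrel2b}), the bonded R3 move (Figure~\ref{bondcros}), and the classical braid relations. The essential observation is that sliding a node of the bond along its attaching strand past a crossing trades the local picture in which the node sits on a strand of the crossing (Figure~\ref{bondcros1}) for the local picture in which the other strand of the crossing threads over (or under) the bond (Figure~\ref{bondrel2}). Thus both moves are manifestations of one underlying vertex slide, read in two different frames.

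\medskip
\noindent\textbf{Forward direction.} I would start from the configuration of Figure~\ref{bondcros1}, where one node of the bond lies on a strand $p$ that crosses a second strand $q$. First, apply a braided vertex slide move to drag the node along $p$ through the crossing region. Since the node carries the bond with it, the strand $q$ that formerly crossed $p$ at the node now crosses the bond instead, producing precisely the strand-over-bond (resp. strand-under-bond) picture of Figure~\ref{bondrel2}; the over/under datum of the original $\sigma_i$-crossing becomes the over/under datum of $q$ relative to the bond. A bonded R3 move (and, in the subcase where both nodes sit on the crossing strands, the bonded flype move) then reconciles the two sides, after which a bonded braided $\Delta$-move restores the standard \texttt{H}-neighborhood and the monotone form of the braid.

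\medskip
\noindent\textbf{Reverse direction.} Reading the same sequence backwards recovers the node-on-crossing move from the strand-over-bond move, using that each braided vertex slide, each bonded flype, and each bonded R3 move is invertible. The only extra care needed is that the slide be performed on the appropriate side of the crossing so that the resulting node lands on the intended strand and with the intended crossing sign.

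\medskip
\noindent\textbf{Main obstacle.} The delicate part is the bookkeeping of the over/under markings, i.e. the sequences of $o$'s and $u$'s, as the node passes through the crossing, since the slide must not alter the ambient isotopy type of the bonded braid. In particular one must respect the condition noted just before the statement, namely that in Figure~\ref{bondrel2} any further strands threading through the bond all lie under the overcrossing strand; verifying that the vertex slide preserves exactly this stratification, so that no forbidden passage of the node through a crossing is introduced, is the crux. Once the markings are tracked case-by-case for the two signs of the crossing and the two positions (over/under) of the threading strand, the equivalence follows in each case from the indicated basic moves.
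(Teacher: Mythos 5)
Your argument matches the paper's: the paper's entire proof consists of a single picture (Figure~\ref{slidemove}) exhibiting exactly the manipulation you describe, namely using the braided vertex slide moves to trade the node-on-a-crossing-strand configuration of Figure~\ref{bondcros1} for the strand-passing-over-a-bond configuration of Figure~\ref{bondrel2}, with the remaining basic moves tidying up. The over/under bookkeeping you flag as the crux is handled implicitly in that figure, so your proposal is a faithful (and somewhat more explicit) rendering of the same proof.
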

\begin{proof} The proof of the Lemma is adequately described in Figure~\ref{slidemove}. \end{proof}

\begin{figure}[H]
\begin{center}
\includegraphics[width=3.5in]{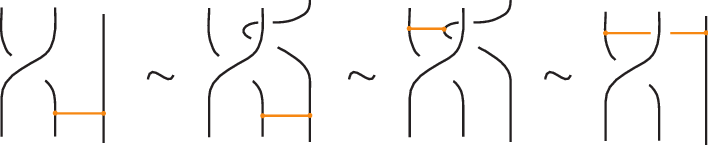}
\end{center}
\caption{The  proof of  Lemma~\ref{lem:basicbbmove}.}
\label{slidemove}
\end{figure}

Finally, in Figure~\ref{forbond}, we illustrate some examples of configurations that are not allowed under bonded braid equivalence.

\begin{figure}[H]
\begin{center}
\includegraphics[width=4.3in]{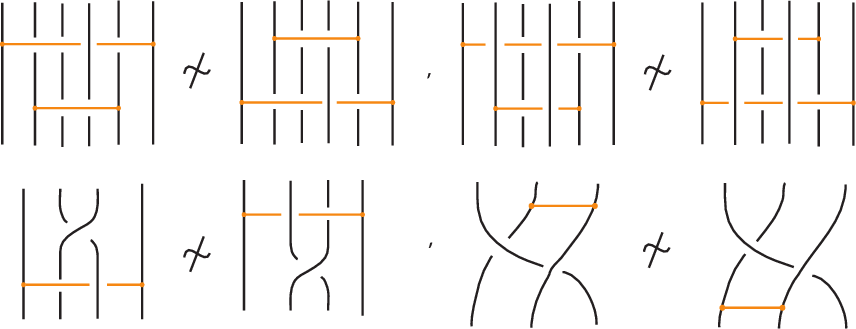}
\end{center}
\caption{Examples of moves that are not allowed in bonded braid equivalence. These configurations violate the topological constraints of the bonded braid model.}
\label{forbond}
\end{figure}

The additional moves depicted in Figures~\ref{twobonds}, \ref{twobonds2} and \ref{bondcros} are essential to define bonded braid equivalence. These moves account for the interactions specific to bonds, ensuring that the topological properties of the bonded braid are preserved. Using these moves, we extend classical braid isotopy to include bonded braid isotopy, leading to the following formal definition.

\begin{definition}\rm
Two bonded braids are {\it isotopic} if and only if they differ by classical braid isotopy that takes place away from the bonds, together with moves depicted in Figures~\ref{twobonds}--\ref{bondcros}. An equivalence class of isotopic bonded braid diagrams is called a {\it bonded braid}.
\end{definition}

\subsection{The Bonded Braid Monoid}

Let now $BB_n$ be the set of bonded braids on $n$-strands. In $BB_n$ we define as operation the usual concatenation of classical braids. Then, the set of bonded braids forms a monoid, called the {\it bonded braid monoid}. We denote by $b_{i, j}$ any bond whose nodes lie on the $i^{th}$ and $j^{th}$ strand of the braid, and by $b_i$ we denote the bond $b_{i,i+1}$  whose nodes lie on the $i^{th}$ and $(i+1)^{th}$ strands, which we shall call {\it elementary bond}, in contrast to the {\it long bonds} $b_{i, j}$ for $i \neq j$. We are now ready to give a presentation of \( BB_n \).

\begin{theorem}\label{thm:bondedmonoid}
The set \( BB_n \) of bonded braids on \( n \) strands forms a monoid under braid concatenation. The bonded braid monoid has a presentation with generators:
\[
\sigma_1, \ldots, \sigma_{n-1} \quad (\text{classical invertible braid generators}) 
\quad \text{and} \quad b_{i,j} \quad (1\leq i<j\leq n) \quad (\text{bonds}),
\]
subject to the relations:
\begin{itemize}
\item[-] the classical braid relations among the \( \sigma_i \)'s,
\item[-] the relations illustrated in Figures~\ref{twobonds3}–\ref{bondrel2} (including all their variants), describing all possible interactions of bonds among themselves and with arcs or crossings.
\end{itemize}
\end{theorem}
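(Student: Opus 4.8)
The plan is to establish the monoid presentation in two stages, mirroring the standard strategy for presenting diagram monoids: first show that every bonded braid diagram can be written as a word in the proposed generators (surjectivity of the natural map from the free monoid on $\{\sigma_i^{\pm 1}, b_{i,j}\}$ modulo the stated relations onto $BB_n$), and then show that any two words representing isotopic bonded braids are connected by the listed relations (injectivity, i.e. completeness of the relations). The monoid structure itself — closure under concatenation, associativity, and the empty braid as identity — is immediate from the definition of vertical stacking, so the content lies entirely in the presentation.

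For the \emph{generation} step, I would take an arbitrary bonded braid diagram $(\beta,B)$ and, using bonded braid planar isotopy (the $\Delta$-moves of Figure~\ref{delta}) together with the braided vertex slide moves of Figure~\ref{regulartotight1}, perturb it into \emph{general position} with respect to the height function, so that the finitely many crossings and bond-nodes occur at distinct heights. Slicing horizontally between consecutive critical levels then decomposes the diagram as a vertical concatenation of elementary layers, each containing exactly one feature: a single crossing $\sigma_i^{\pm 1}$ or a single bond $b_{i,j}$ (with its prescribed o/u threading). This exhibits $(\beta,B)$ as a product of the proposed generators, so the natural map is surjective. Every relation in the statement is realized by an isotopy of diagrams (they are precisely the moves of Proposition-type catalogued in Figures~\ref{twobonds3}--\ref{bondrel2}), hence the map descends to a well-defined monoid epimorphism from the presented monoid onto $BB_n$.

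For the \emph{completeness} step, suppose two generator-words yield isotopic bonded braids. By the definition of bonded braid isotopy, the two diagrams differ by a finite sequence of classical braid isotopies away from bonds together with the bond-interaction moves of Figures~\ref{twobonds}--\ref{bondcros}. The task is to verify that each such elementary isotopy corresponds to a consequence of the listed defining relations. The classical braid isotopies are covered by the braid relations among the $\sigma_i$; the bond-bond interactions (far-apart commutation, uniform over/under, matching crossing sequences) are exactly the relations of Figures~\ref{twobonds3}, \ref{twobonds}, \ref{twobonds2}; the bond-crossing interactions (free commutation, bonded flype, uniform position, bonded R3, and the one-node cases) are the relations of Figures~\ref{bondsrel2}--\ref{bondrel2}. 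Here I would invoke Lemma~\ref{lem:basicbbmove} to reduce the one-node crossing-bond moves (Figure~\ref{bondcros1}) to the strand-over-bond move of Figure~\ref{bondrel2}, so that they need not be taken as independent relations but follow from the others.

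\textbf{The main obstacle} I expect is the completeness step, and specifically the bookkeeping needed to show that a generic isotopy decomposes into the \emph{listed} local moves rather than some larger, unlisted collection. The subtlety is that a long bond $b_{i,j}$ carries a full o/u threading sequence, so when a strand or crossing slides past it, one must track how the crossing data of the bond changes and confirm that each intermediate configuration is again expressible through the stated relations — and, crucially, that the forbidden configurations of Figure~\ref{forbond} never need to be traversed. I would handle this by a level-by-level (Morse-type) analysis of the ambient isotopy between the two diagrams: at each moment a node, a crossing, or a strand-bond tangency passes a critical height, and each such event is matched to exactly one relation in the list (or, via Lemma~\ref{lem:basicbbmove} and the already-proven consequences of Section~\ref{sec2:bondedlinks}, to a composite of them). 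The finiteness of critical events then yields a finite chain of relation-applications connecting the two words, establishing injectivity and hence the presentation.
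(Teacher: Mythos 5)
Your proposal is correct and rests on the same underlying idea as the paper, whose own proof is a one-line appeal to the fact that the relations are precisely the exhaustively catalogued interactions of bonds with bonds, arcs, and crossings; you simply make explicit the standard two-step scaffolding (generation by slicing into elementary layers, completeness by matching each elementary isotopy event to a listed relation) that this appeal implicitly relies on. The extra detail, including the use of Lemma~\ref{lem:basicbbmove} to reduce the one-node bond--crossing moves, is consistent with and fills in the paper's argument rather than departing from it.
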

\begin{proof}
The proof of the Theorem is an immediate consequence of the exhaustive listing above of all possible interactions of bonds among themselves as well as with other arcs or crossings in the bonded braid. 
\end{proof}

We further  have the following: 

\begin{lemma}\label{lembw}
A standard bond is a word of the classical braid generators and an elementary bond.
\end{lemma}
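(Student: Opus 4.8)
The plan is to show that any standard bond $b_{i,j}$ can be rewritten, up to bonded braid isotopy, as a product of classical braid generators $\sigma_k^{\pm 1}$ together with a single elementary bond $b_m$. The key geometric idea is that a long bond $b_{i,j}$ threads through all the intermediate strands $i+1, \dots, j-1$ with some prescribed sequence of over/under crossings, and each such threading can be ``combed out'' by pushing the intermediate strands aside using braid generators, leaving behind only a local elementary bond.

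First I would set up the notation: write the bond $b_{i,j}$ together with its crossing sequence (the word in $o$'s and $u$'s of length $j-i+1$ encoding how it passes the strands in between). The approach is to process the intermediate strands one at a time. Using the braided vertex slide moves of Figure~\ref{regulartotight1} (the interaction between a bond and an arc) in combination with the interactions between a bond and a crossing (Figures~\ref{bondsrel2}--\ref{bondcros}), I would slide a node of the bond across one intermediate strand at a time. Each slide of the node past a strand that the bond crosses ``over'' is realized by introducing a braid generator $\sigma_k$ (of the appropriate sign and position) on the strand that gets displaced, and symmetrically for an ``under'' crossing with $\sigma_k^{-1}$. Concretely, a single over-threading is converted, via Lemma~\ref{lem:basicbbmove} (which lets a strand pass over or under a bond, reinterpreted as a crossing-plus-node move), into a classical crossing adjacent to a shorter bond. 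Iterating, I would reduce $b_{i,j}$ spanning $j-i$ gaps to a bond spanning $j-i-1$ gaps, conjugated by braid generators.

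The induction then proceeds on the span $j-i$ of the bond. The base case $j-i=1$ is exactly an elementary bond $b_i$, so there is nothing to prove. For the inductive step, having moved the bond's endpoint past the outermost intermediate strand at the cost of a braid generator, I would apply the inductive hypothesis to the resulting shorter standard bond, conclude it is a word in the $\sigma_k$'s and a single elementary bond, and then absorb the extra generator into that word. Care must be taken that the generators introduced on both ends of the bond, and those coming from rebracketing, are genuine elements of $B_n$ and that the manipulations are all among the allowed bonded braid moves listed in Theorem~\ref{thm:bondedmonoid}; this is guaranteed because every slide I use is one of the exhaustively listed interactions.

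The main obstacle will be bookkeeping the crossing sequence correctly: when a node is slid across a strand, the remaining crossings of the bond with the other strands must be tracked so that the residual bond is still \emph{standard} (no bond self-crossings) and so that the correct sign of each generator $\sigma_k^{\pm1}$ is recorded. In particular one must verify that the moves used do not create a crossing between the bond and a strand that the forbidden configurations of Figure~\ref{forbond} would prohibit, and that each elementary step is realizable by Lemma~\ref{lem:basicbbmove} rather than by a disallowed passage of a node through a crossing. Once the correct correspondence between an $o$ (resp.\ $u$) in the crossing sequence and a positive (resp.\ negative) braid generator is pinned down, the statement follows by the straightforward induction sketched above.
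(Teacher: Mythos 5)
Your proposal is correct and is essentially the paper's argument: the paper's proof simply invokes the braided vertex slide moves (Figure~\ref{regulartotight1}) to pull the intermediate strands aside one by one, as depicted in Figure~\ref{gb2}, which is exactly the combing-out process you describe. Your induction on the span $j-i$ is just a formalization of that same iterative procedure, so the two proofs coincide in substance.
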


\begin{proof}
Figure~\ref{gb2} illustrates a bonded braid isotopy for contracting the bond or equivalently for pulling to the side strands crossing over or under the bond, using braided vertex slide moves  (recall Figure~\ref{regulartotight1}). \end{proof}

\begin{figure}[H]
\begin{center}
\includegraphics[width=3.8in]{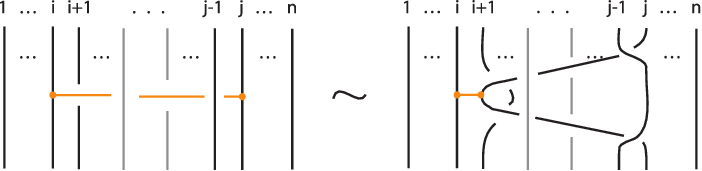}
\end{center}
\caption{The bond $b_{i,j}$ expressed as a combination of classical braid generators and an elementary bond.}
\label{gb2}
\end{figure}

Using now Lemma~\ref{lembw} and standard Tietze transformations on the presentation of \( BB_n \) of Theorem~\ref{thm:bondedmonoid}, one can obtain a reduced presentation for \( BB_n \) that uses only the classical braid generators \( \sigma_i \) and the elementary bonds \( b_i \).  Namely, we have the following:

\begin{theorem}\label{thm:tightbonded}
The bonded braid monoid, $BB_n$, on \( n \) strands admits the following reduced presentation: it is generated by the classical braid generators and their inverses  \( \sigma_1^{\pm1}, \ldots, \sigma_{n-1}^{\pm1} \) and the elementary bonds \( b_1, \ldots, b_{n-1} \), subject to the relations:
\[
\begin{array}{rcll}
\sigma_i\, \sigma_j & = & \sigma_j\, \sigma_i & \quad \text{for } \ |i-j|>1, \\[4pt]
\sigma_i\, \sigma_{i+1}\, \sigma_i  & = &  \sigma_{i+1}\, \sigma_i\, \sigma_{i+1} & \quad \text{for all } \ i, \\[4pt]
b_i\, b_j  & = &  b_j\, b_i & \quad \text{for } |i-j|>1, \\[4pt]
b_i\, {\sigma_j}  & = &  {\sigma_j} \, b_i & \quad \text{for } \ |i-j|>1, \\[4pt]
b_i\, \ {\sigma_i}  & = &  {\sigma_i} \, b_i & \quad \text{for all } \ i, \\[4pt]
b_i\, \sigma_{i+1}\, \sigma_i  & = &  \sigma_{i+1}\, \sigma_i\, b_{i+1} & \quad \text{for all } \ i,\\[4pt]
 \sigma_i \, \sigma_{i+1} \, b_i & = &  b_{i+1} \, \sigma_i \, \sigma_{i+1}  & \quad \text{for all } \ i.
\end{array}
\]
\end{theorem}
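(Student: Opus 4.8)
The plan is to start from the full presentation of $BB_n$ given in Theorem~\ref{thm:bondedmonoid}, whose generators are all the classical braid generators $\sigma_i$ together with \emph{every} long bond $b_{i,j}$, and to eliminate the long bonds one at a time by Tietze transformations, drawing the eliminating relations from Lemma~\ref{lembw}. Concretely, Lemma~\ref{lembw} together with Figure~\ref{gb2} exhibits, for each $i<j$ with $j>i+1$, a word $w_{i,j}$ in the $\sigma_k^{\pm1}$'s and a single elementary bond such that $b_{i,j}=w_{i,j}$ holds in the monoid. This is a defining relation of the form ``a generator equals a word in the other generators,'' so the Tietze transformation that deletes $b_{i,j}$ and substitutes $w_{i,j}$ for it throughout the presentation is legitimate and preserves $BB_n$.

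First I would perform these eliminations for all long bonds, leaving the reduced generating set $\sigma_1^{\pm1},\dots,\sigma_{n-1}^{\pm1},b_1,\dots,b_{n-1}$. The classical braid relations are untouched and yield the first two families. The remaining task is to substitute $b_{i,j}\mapsto w_{i,j}$ into each interaction relation of Theorem~\ref{thm:bondedmonoid}---the bond--bond commutations of Figures~\ref{twobonds3}--\ref{twobonds2} and the bond--crossing moves of Figures~\ref{bondsrel2}--\ref{bondrel2}, with all their over/under and orientation variants---and to check that each becomes a consequence of the seven listed relations. The short-range relations are immediate: commutation of distant bonds and of a bond with a distant crossing give relations~3 and~4, and the bonded flype of Figure~\ref{bondsrel2b} gives relation~5, $b_i\sigma_i^{\pm1}=\sigma_i^{\pm1}b_i$. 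The two remaining relations $b_i\sigma_{i+1}\sigma_i=\sigma_{i+1}\sigma_i b_{i+1}$ and $\sigma_i\sigma_{i+1}b_i=b_{i+1}\sigma_i\sigma_{i+1}$ are the genuinely new ``bridge'' relations: each encodes a braided vertex slide that shifts an elementary bond from strands $(i,i+1)$ to strands $(i+1,i+2)$, so that together they record the inductive content of Lemma~\ref{lembw} as a finite set of identities. I would check that each $w_{i,j}$ is, modulo the braid relations, a braid-conjugate of an elementary bond, and that the two ways of contracting a long bond---toward its left node versus its right node, i.e. the diagrams $D_1$ and $D_2$---produce equal words precisely because relations~6 and~7 hold.

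The main obstacle will be the completeness check in the opposite direction: that relations~1--7 are not merely necessary but \emph{sufficient}, so that every original interaction move is recovered once the long bonds are expanded. The delicate cases are the long-range bond--bond commutation with matching crossing sequences (Figure~\ref{twobonds2}) and the bonded $R3$ move (Figure~\ref{bondcros}). I would handle these by induction on the bond width $j-i$: replacing $b_{i,j}$ by $w_{i,j}$ turns each long-range move into a braid-conjugate of the corresponding move one unit narrower, so that after applying the braid relations and relations~5--7 the matching-crossing-sequence commutation and the threading $R3$ move collapse to the elementary bonded flype and the classical $R3$ move. The real effort lies not in any single identity but in carefully tracking the over/under markings through the conjugating braid words and confirming that none of the forbidden configurations of Figure~\ref{forbond} is produced along the way.
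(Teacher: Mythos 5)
Your proposal is correct and follows essentially the same route as the paper: the paper derives Theorem~\ref{thm:tightbonded} from Theorem~\ref{thm:bondedmonoid} precisely by invoking Lemma~\ref{lembw} (Figure~\ref{gb2}) to express each long bond $b_{i,j}$ as a word in the $\sigma_k^{\pm1}$'s and an elementary bond, and then eliminating the long-bond generators by Tietze transformations. In fact the paper leaves the substitution and completeness checks implicit, so your outline of the verification (including the role of the two bridge relations and the induction on bond width) is a more detailed account of the same argument.
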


\noindent When considering $BB_n$ with its reduced presentation we shall refer to it as the \emph{tight bonded braid monoid}. 

We now observe that the algebraic structure of the bonded braid monoid is closely related to the well-studied singular braid monoid \cite{Baez} and \cite{Bi}. In particular, one can interpret each bond between two strands as a kind of singular crossing, where the two arcs are  tangential instead of forming a crossing, as illustrated in Figure~\ref{sing1}.

\begin{figure}[H]
\begin{center}
\includegraphics[width=1.3in]{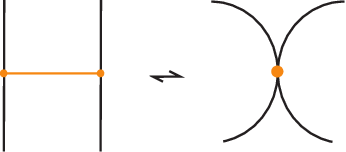}
\end{center}
\caption{Bonds as singular crossings.}
\label{sing1}
\end{figure}

For reference, the singular braid monoid \( SB_n \) is generated by the standard braid generators and their inverses   \( \sigma_1^{\pm1}, \ldots, \sigma_{n-1}^{\pm1} \) together with singular crossing generators \( \tau_1, \ldots, \tau_{n-1} \), subject to the following relations:
\[
\begin{array}{rcll}
\sigma_i\, \sigma_j  & = &  \sigma_j\, \sigma_i & \text{for } |i-j|>1, \\[4pt]
\sigma_i\, \sigma_{i+1}\, \sigma_i  & = &  \sigma_{i+1}\, \sigma_i\, \sigma_{i+1} & \text{for all } i, \\[4pt]
\tau_i\, \tau_j  & = &  \tau_j\, \tau_i & \text{for } |i-j|>1, \\[4pt]
\tau_i\, {\sigma_j}  & = &  {\sigma_j} \, \tau_i & \text{for } |i-j|>1, \\[4pt]
\tau_i\, {\sigma_i} & = &  {\sigma_i} \, \tau_i & \text{for all } i, \\
[4pt]
\tau_i\, \sigma_{i+1}\, \sigma_i  & = &  \sigma_{i+1}\, \sigma_i\, \tau_{i+1} & \text{for all } i \\[4pt]
 \sigma_i \, \sigma_{i+1} \, \tau_i & = &  \tau_{i+1} \, \sigma_i \, \sigma_{i+1}  & \text{for all } \ i. \\[4pt]
\end{array}
\]

Comparing the above presentation of \( SB_n \)  with the reduced presentation of \( BB_n \) in Theorem~\ref{thm:tightbonded} leads to the following result:

\begin{theorem}\label{singbondedmonoid}
The bonded braid monoid \( BB_n \) is isomorphic to the singular braid monoid \( SB_n \).
\end{theorem}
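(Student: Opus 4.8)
The plan is to prove the isomorphism purely at the level of presentations, invoking the reduced presentation of $BB_n$ established in Theorem~\ref{thm:tightbonded} and the stated presentation of $SB_n$. The key observation, which does essentially all the work, is that these two presentations are \emph{term-by-term identical} under the correspondence of generators $\sigma_i^{\pm 1} \leftrightarrow \sigma_i^{\pm 1}$ and $b_i \leftrightarrow \tau_i$: both monoids are generated by invertible generators $\sigma_1^{\pm 1}, \ldots, \sigma_{n-1}^{\pm 1}$ together with non-invertible generators ($b_i$ in one case, $\tau_i$ in the other), and the two lists of defining relations coincide exactly after the substitution $b_i \mapsto \tau_i$. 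Thus the strategy is to exhibit mutually inverse monoid homomorphisms using the universal property of monoid presentations.

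First I would define a map $\phi \colon BB_n \to SB_n$ on generators by $\sigma_i^{\pm 1} \mapsto \sigma_i^{\pm 1}$ and $b_i \mapsto \tau_i$. To confirm that $\phi$ extends to a well-defined monoid homomorphism, it suffices to check that each defining relation of $BB_n$ (from Theorem~\ref{thm:tightbonded}) is carried to a relation holding in $SB_n$. The classical braid relations are preserved since $\phi$ fixes the $\sigma_i$; the relations $b_i b_j = b_j b_i$ for $|i-j|>1$, $b_i \sigma_j^{\pm 1} = \sigma_j^{\pm 1} b_i$ for $|i-j|>1$, $b_i \sigma_i^{\pm 1} = \sigma_i^{\pm 1} b_i$, $b_i \sigma_{i+1}\sigma_i = \sigma_{i+1}\sigma_i b_{i+1}$, and $\sigma_i \sigma_{i+1} b_i = b_{i+1}\sigma_i\sigma_{i+1}$ map respectively to the corresponding relations with $\tau$ in place of $b$, each of which is one of the defining relations of $SB_n$. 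Hence $\phi$ is a homomorphism.

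Symmetrically, I would define $\psi \colon SB_n \to BB_n$ by $\sigma_i^{\pm 1} \mapsto \sigma_i^{\pm 1}$ and $\tau_i \mapsto b_i$; the identical comparison of relation lists shows $\psi$ is well defined. Since $\phi \circ \psi$ and $\psi \circ \phi$ fix every generator, they coincide with the respective identity homomorphisms, and therefore $\phi$ is an isomorphism with inverse $\psi$. I do not expect any genuine obstacle here: the only thing requiring care is to verify that the two relation sets match \emph{exactly}, including the structural point that $b_i$ and $\tau_i$ are both non-invertible while the $\sigma_i$ are invertible in both monoids, so that the generating sets genuinely correspond under $\phi$ and $\psi$. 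The geometric motivation—interpreting a bond as a singular (tangential) crossing, as in Figure~\ref{sing1}—explains \emph{why} the presentations agree, but the rigorous argument is this direct presentation comparison.
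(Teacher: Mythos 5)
Your proposal is correct and follows essentially the same route as the paper: the paper's proof likewise identifies $b_i$ with $\tau_i$ and observes that the reduced presentation of $BB_n$ from Theorem~\ref{thm:tightbonded} coincides relation-by-relation with the presentation of $SB_n$, so the assignment extends to a monoid isomorphism. Your version merely spells out the mutually inverse homomorphisms $\phi$ and $\psi$ more explicitly, which is a harmless elaboration of the same argument.
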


\begin{proof}
By identifying each bond generator \( b_i \) with the singular crossing generator \( \tau_i \) as illustrated in Figure~\ref{sing1}, we see that the defining relations of \( BB_n \) coincide with those of \( SB_n \). Hence, the assignment \( b_i \mapsto \tau_i \) and \( \sigma_i \mapsto \sigma_i \) extends to a monoid isomorphism \( BB_n \cong SB_n \).
\end{proof}

The  above leads also to the following remark:

\begin{remark}
A singular knot or braid can be realized geometrically with tangential singularities in place of singular crossings, giving rise to a new theory.  Regarding the singularities as tangentialities of lines, one can analyze the dynamics of the birth and death of such interactions in a generalization of the present work.
\end{remark}

\begin{remark}
We observe  that the first two relations in Theorem~\ref{thm:tightbonded} are the relations of  the classical braid group $B_n$.  So we have that the  classical braid group $B_n$ injects in the tight bonded braid monoid $BB_n$. This follows  from Theorem~\ref{singbondedmonoid} and from the analogous  result about  the singular braid monoid \( SB_n \).    
 In general, by virtue of Theorem~\ref{singbondedmonoid} any result on the singular braid monoid \( SB_n \)  transfers intact to the tight bonded braid monoid $BB_n$. And vice versa, like Theorem~\ref{thm:irredundant} that follows, which is also valid for \( SB_n \). We further observe  that the relations satisfied by the $b_i$'s are compatible with the braid relations. Therefore, there is a surjection from  $BB_n$ to $B_n$ by assigning \( b_i \mapsto \sigma_i \) and \( \sigma_i \mapsto \sigma_i \) and another one by assigning \( b_i \mapsto id \) and \( \sigma_i \mapsto \sigma_i \).  
\end{remark}

We can further reduce the presentation of \( BB_n \) by using only the classical braid generators \( \sigma_i \) and a single bond generator \( b_1 \). Indeed, by any of the two last relations in Theorem~\ref{thm:tightbonded} all elementary bonds \( b_i \) for \( i>1 \) can be expressed as conjugates of \( b_1 \) by the appropriate braid words. Suppose we fix  the first ones: $b_i\, \sigma_{i+1}\, \sigma_i  =  \sigma_{i+1}\, \sigma_i\, b_{i+1}$. Then we obtain: 
\[
\begin{array}{rcl}
b_2 & = & (\sigma_2 \sigma_1)^{-1}  \, b_1\, (\sigma_2 \sigma_1) \\[4pt]
b_3  & = & (\sigma_3 \sigma_2)^{-1}  (\sigma_2 \sigma_1)^{-1}  \, b_1\, (\sigma_2 \sigma_1) (\sigma_3 \sigma_2)   \\[4pt]
& \vdots &  \\[4pt]
 b_i & = & (\sigma_i \sigma_{i-1})^{-1} \cdots (\sigma_2 \sigma_1)^{-1}  \, b_1\, (\sigma_2 \sigma_1) \cdots  (\sigma_i \sigma_{i-1}),
  \\[4pt]
& \vdots & 
\end{array}
\]
On the other hand, by the last relations in Theorem~\ref{thm:tightbonded} we obtain: $b_2 =  (\sigma_1\, \sigma_2) \, b_1 \,  (\sigma_1\, \sigma_2)^{-1}$. So, substituting $b_2$ in the first of the above relations, we extract the relation: 
\[ 
 b_1\, (\sigma_2 \sigma_1)(\sigma_1\, \sigma_2) = (\sigma_2 \sigma_1)(\sigma_1\, \sigma_2) \, b_1
\]
Therefore, by the above and applying  Tietze transformations on the relations of  Theorem~\ref{thm:tightbonded} we obtain the following irredundant presentation:

\begin{theorem}\label{thm:irredundant}
The tight bonded braid monoid \( BB_n \) admits the following irredundant presentation: it is generated by the classical braid generators  and their inverses   \( \sigma_1^{\pm1}, \ldots, \sigma_{n-1}^{\pm1} \) and a single bond generator \( b_1 \), subject to the relations:
\[
\begin{array}{rcll}
\sigma_i\, \sigma_j & = & \sigma_j\, \sigma_i & \quad \text{for } |i-j|>1, \\[4pt]
\sigma_i\, \sigma_{i+1}\, \sigma_i  & = &  \sigma_{i+1}\, \sigma_i\, \sigma_{i+1} & \quad \text{for all } i, \\[4pt]
b_1\, {\sigma_j} & = & {\sigma_j}  \, b_1 & \quad \text{ for } j=1 \text{ \& } j>2, \\[4pt]
 b_1\, (\sigma_2 \sigma_1)(\sigma_1\, \sigma_2) & = &  (\sigma_2 \sigma_1)(\sigma_1\, \sigma_2) \, b_1 & \\[4pt] 
b_1 \, (\sigma_2 \sigma_1 \sigma_3 \sigma_2)^{-1}  \, b_1\, (\sigma_2 \sigma_1 \sigma_3 \sigma_2) & = &  (\sigma_2 \sigma_1 \sigma_3 \sigma_2)^{-1}  \, b_1\, (\sigma_2 \sigma_1 \sigma_3 \sigma_2) \, b_1  &
\end{array}
\]
\end{theorem}

\section{An analogue of the Alexander Theorem for bonded links}\label{alsec}

 As noted in Section~\ref{sec6:bondedbraids}, one can apply to a bonded braid the usual  closure operation and obtain a bonded link (recall Figure~\ref{closure}). In this section we focus on the theory of topological standard bonded links and we present a braiding algorithm for oriented standard bonded links.  Namely, we prove the following bonded analogue of the Alexander Theorem for classical links:

\begin{theorem}[{\bf Braiding theorem for bonded links}] \label{alexbl}
 Every oriented topological standard bonded link can be represented topological vertex isotopically as the closure of a standard resp. tight bonded braid. 
\end{theorem}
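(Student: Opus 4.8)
The plan is to adapt a classical braiding algorithm (in the spirit of Alexander's original argument and its modern refinements, e.g. the Yamada--Vogel or the $L$-move braiding of \cite{LR1}) to the bonded setting, treating bonds as a new kind of arc that must be threaded coherently around a chosen braid axis. First I would fix an orientation on the link $L$ and choose a braid axis, that is, a point in the plane (or a vertical line in the diagram) about which all strands are to run monotonically downward; equivalently, I would subdivide the diagram into small arcs and declare each to be either \emph{ascending} or \emph{descending} with respect to the axis. The descending arcs are the ``good'' arcs that are already braided; the ascending arcs are the obstructions to be eliminated. Since we work in the \emph{standard} category, each bond has been isotoped (by the results of Section~\ref{sec3:stbondedlinks}) into an H-neighborhood where it appears as a short horizontal rung, which is precisely the local picture of an elementary bond $b_i$ (possibly long, $b_{i,j}$) in a bonded braid; so the bonds need no orientation and are automatically compatible with the horizontal position required by a bonded braid diagram.

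Next I would run the braiding (Alexander/Vogel) moves on the link arcs only, leaving the bonds in place. The core step is the \emph{up-arc elimination}: whenever an arc runs counter to the chosen orientation around the axis, one cuts it and throws the two pieces over (or under) the axis, replacing a single bad arc by two good descending arcs at the cost of introducing new crossings with the intervening strands. This is exactly an $L$-move in the terminology of \cite{LR1}, and applied repeatedly it converts all link arcs into braided form. The point that requires care, and which I expect to be the main obstacle, is that such cuttings and threadings must interact correctly with the bonds: one must verify that every elementary step can be performed so that (a) no cut is ever made on a bond (only link arcs are cut), and (b) whenever an up-arc carrying or neighboring a bonding site is thrown over the axis, the bond is dragged along coherently and remains an embedded horizontal rung in the resulting diagram, its crossings with the swept strands being recorded as the over/under sequence of a long bond $b_{i,j}$. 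Concretely, I would check case by case that pushing an arc across a bonding site, or sliding a bond past the axis together with its strand, is realized by the allowed bonded isotopy moves established in Proposition~\ref{breidthm} and Theorem~\ref{reid_standard} (the braided vertex slide moves of Figure~\ref{regulartotight1} and the bond--crossing interactions of Figures~\ref{bondcros}--\ref{bondrel2}). The forbidden moves of Figure~\ref{for1} do not arise, because we never pull a bond endpoint through a crossing; we only slide it or sweep strands across it.

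To organize the induction cleanly I would order the bonds and the bad link arcs, and braid the diagram \emph{region by region}, first straightening all link arcs that lie away from the H-neighborhoods and then handling each bond's neighborhood. Because each bond is already a horizontal rung, once its two endpoints sit on two (say the $i$th and $j$th) braid strands running monotonically downward, the bond is by definition the long bond $b_{i,j}$, with the crossing data of the threaded strands giving its $o/u$ sequence; by Lemma~\ref{lembw} this $b_{i,j}$ is itself a word in the $\sigma$'s and an elementary bond, so the output is genuinely a standard, hence (after contracting the bond into an H-neighborhood free of arcs, as in Section~\ref{sec4:tightbondedlinks}) a tight bonded braid. Taking the standard closure of the resulting bonded braid recovers the original bonded link up to the topological vertex isotopy moves of Theorem~\ref{reid_standard}, which completes the argument. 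The termination of the algorithm follows, as in the classical case, from a complexity measure (for instance the number of bad arcs, or the Seifert-circle/Vogel complexity) that strictly decreases under each up-arc elimination and is unaffected by the bond-handling steps.
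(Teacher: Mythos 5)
Your overall strategy---adapting the braiding algorithm of \cite{LR1} by eliminating up-arcs while the bonds stay horizontal, and then reading off each bond as a long generator $b_{i,j}$ from its over/under sequence, reduced to tight form at the end---is the same as the paper's. The genuine gap is in your treatment of up-arcs that carry a node. You assert that, because the link is in standard form, the bonds are ``automatically compatible'' with the braid picture, and that when an up-arc carrying a bonding site is thrown over the axis ``the bond is dragged along coherently and remains an embedded horizontal rung,'' the verification reducing to the braided vertex slide moves (Figure~\ref{regulartotight1}) and the bond--crossing interactions (Figures~\ref{bondcros}--\ref{bondrel2}). This is not so: standard form controls only the knotting of the bonds, not the orientation of their attaching strands, and when a braiding move reverses a node's strand from upward to downward the local H-configuration at that node is turned over. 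Restoring a horizontal bond between two downward strands then requires twisting the bond around its attaching strand, i.e.\ a topological vertex twist (TVT, Figure~\ref{tvt}), which none of the moves you invoke supply. The paper isolates this as a separate preliminary step: before any braiding move is applied, every node is removed from every up-arc by an explicit local isotopy, with two cases according to whether the bond's two attaching arcs are antiparallel or both upward (Figures~\ref{orbond} and~\ref{orbond1}); only after all nodes lie on down-arcs is the classical algorithm of \cite{LR1} run, ignoring the bonds entirely.

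This omission also conceals why the theorem is stated for the topological category only: the TVT needed in the antiparallel case is forbidden under rigid vertex isotopy, so two antiparallel strands joined by a bond cannot be braided rigidly (this is the paper's Remark after the proof). Your argument as written would apply verbatim to rigid bonded links, which is a symptom that the step where the TVT actually enters has not been carried out. Two smaller points: bringing all bonds to horizontal position is itself a nontrivial preliminary step requiring an extension of the general position conditions (no node vertically aligned with subdivision points, extrema, or crossings), and the appeal to Lemma~\ref{lembw} at the end is the right way to pass from the standard to the tight conclusion, but it presupposes that the nodes already sit on monotonically descending strands---which is exactly what the missing step must establish.
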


\begin{proof}
 For proving the theorem we adapt to the bonded setting the  braiding algorithm used in \cite{LR1} for braiding classical oriented link diagrams. The main idea was to keep the downward arcs (with respect to the top-to-bottom direction of the plane) of the oriented link diagrams fixed and to replace upward arcs with braid strands. Upward arcs may need to be further subdivided  into smaller arcs, each passing either over or under other arcs,  as shown in Figure~\ref{upa}, which we label with an `o' or a `u' accordingly.   These final upward  arcs are called {\it up-arcs}.  If an up-arc contains no crossings, then the choice is arbitrary. 

\begin{figure}[H]
\begin{center}
\includegraphics[width=3.7in]{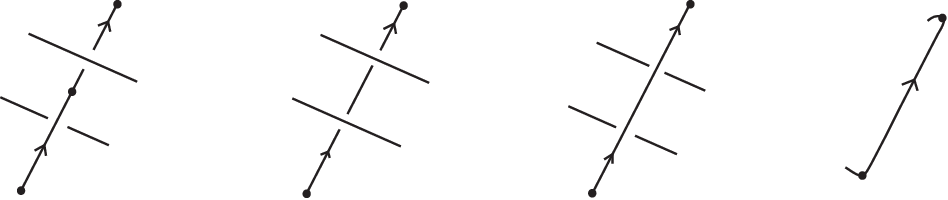}
\end{center}
\caption{Upward oriented arcs are further subdivided to up-arcs, entirely ‘over’ or ‘under’.}
\label{upa}
\end{figure}

 We  assume, by general position, that there are no horizontal or vertical link arcs and that no two subdividing points or local maxima and minima or crossings are vertically aligned and no crossing is horizontally aligned with a bond. We then  proceed with applying the {\it braiding moves}, as illustrated abstractly in Figure~\ref{ahg}. We perform an $o$-braiding move on an up-arc  with an `o' label, whereby the new pair of corresponding braid strands replacing the up-arc run both entirely over the rest of the diagram  (see Figure~\ref{ahg}), and analogous $u$-braiding moves on the up-arcs  with a `u' label. 

\begin{figure}[H]
    \centering
    \includegraphics[width = .6\textwidth]{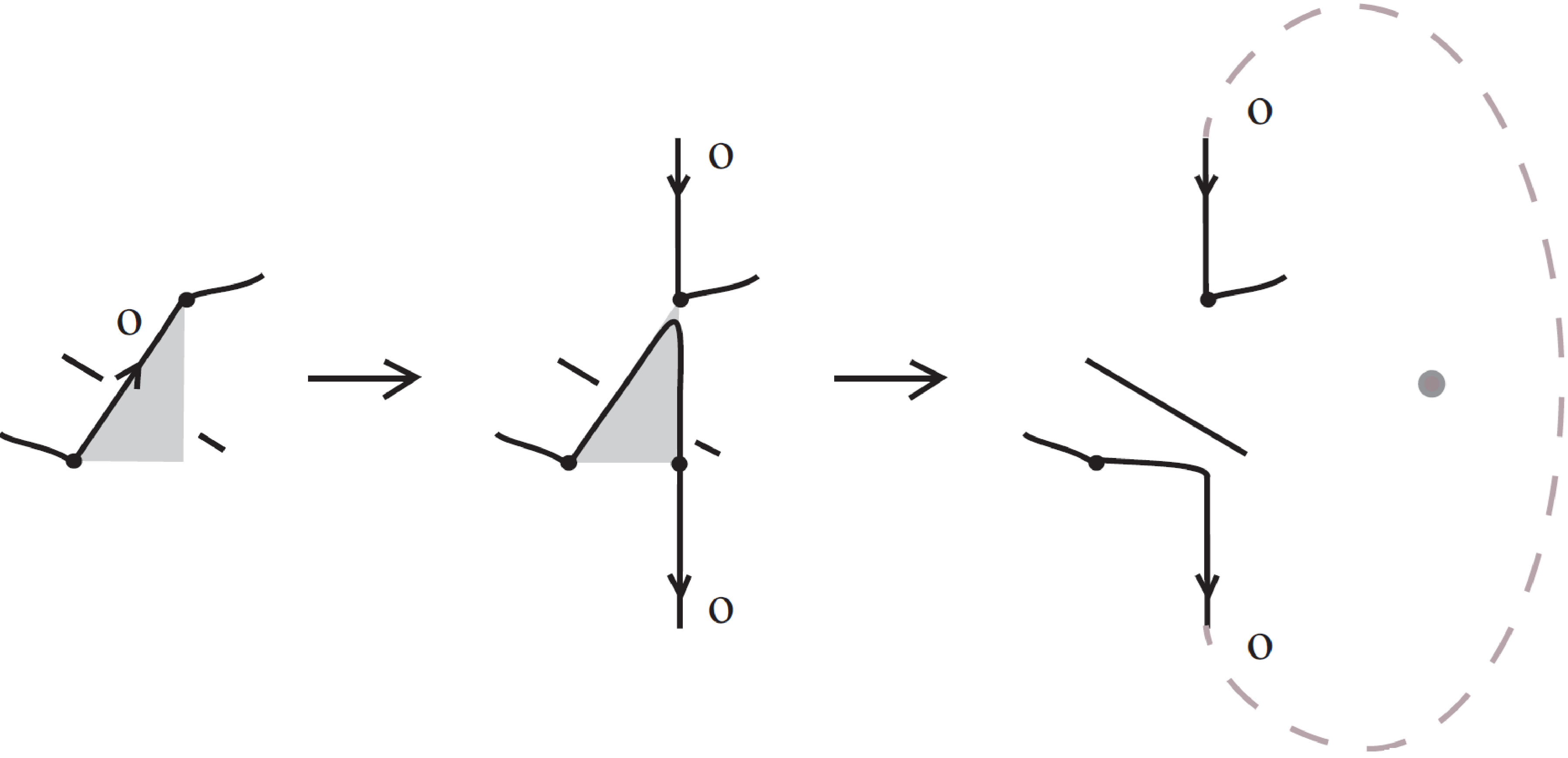}
    \caption{Braiding moves for up-arcs.}
\label{ahg}
\end{figure}

In order to use this classical braiding algorithm on standard/tight bonded links we need to deal further with situations containing bonds.  In the {\it bonded general position}, we also require that a node is not vertically aligned with subdividing points or local maxima and minima. 

\smallbreak
\noindent {\it Step 1:} We first bring all bonds to horizontal position using (bonded) planar isotopy, observing the general position rules. View Figure~\ref{verticalbond}.  

 \begin{figure}[H]
\begin{center}
\includegraphics[width=3.1in]{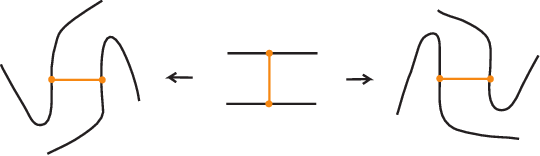}
\end{center}
\caption{Bringing bonds to horizontal position.}
\label{verticalbond}
\end{figure}

\noindent {\it Step 2:} We next deal with up-arcs in the diagram that contain at least one node of a bond, standard or tight. For this we apply the isotopy moves TVT or RVT as exemplified in Figures~\ref{orbond} and \ref{orbond1}, according to whether the two joining arcs are antiparallel or parallel upward arcs. Note that  the braiding preparation of a bonded region with two parallel upward arcs can be also achieved with planar isotopy, by  performing a 180-degree rotation on the plane. Then the bonds will only lie on down-arcs, so that the braiding algorithm will not affect them.  

\begin{figure}[H]
\begin{center}
\includegraphics[width=3.2in]{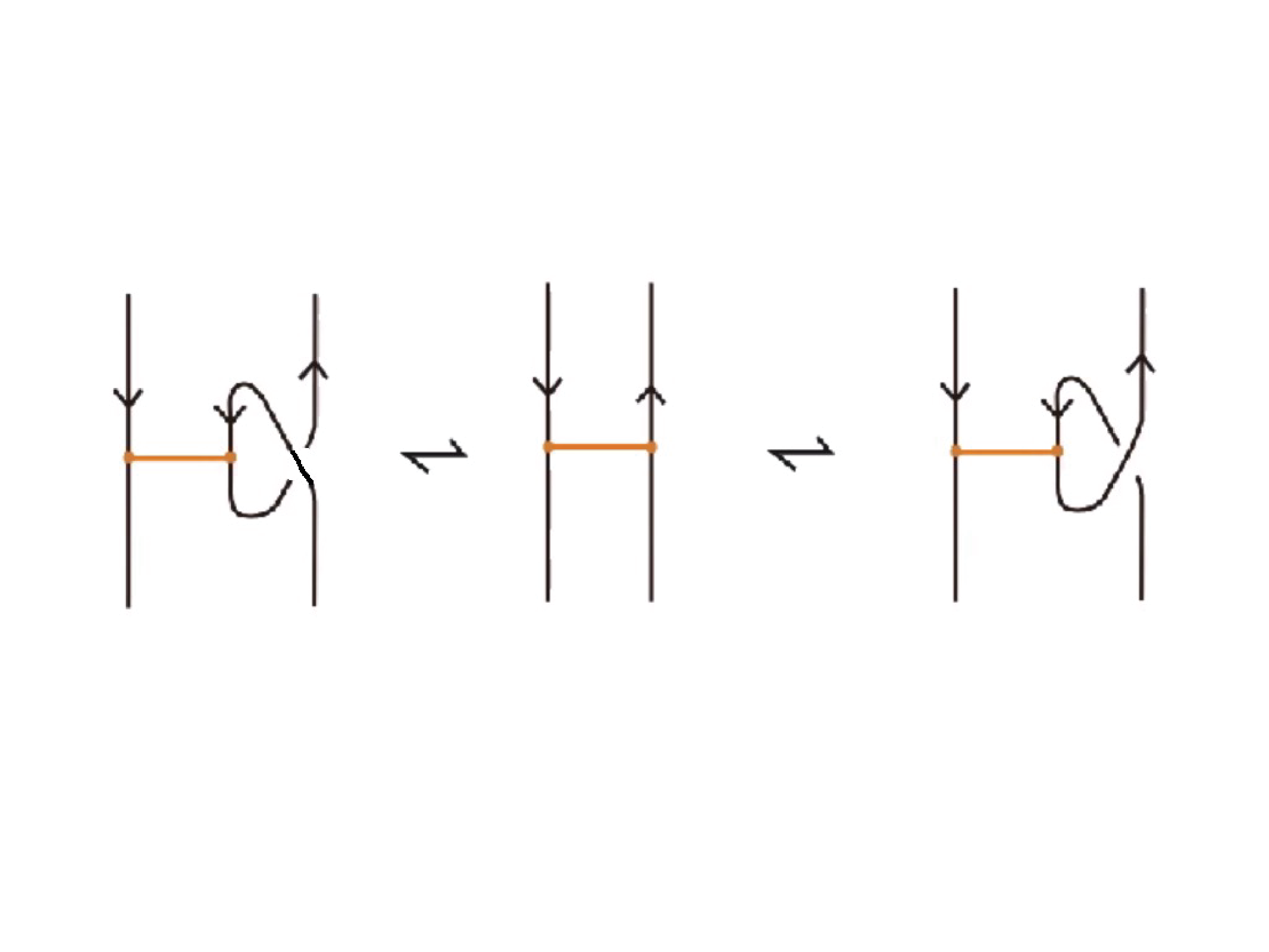}
\end{center}
\caption{ Braiding preparation for two antiparallel bonded arcs using TVT-moves.}
\label{orbond}
\end{figure}

\begin{figure}[H]
\begin{center}
\includegraphics[width=2in]{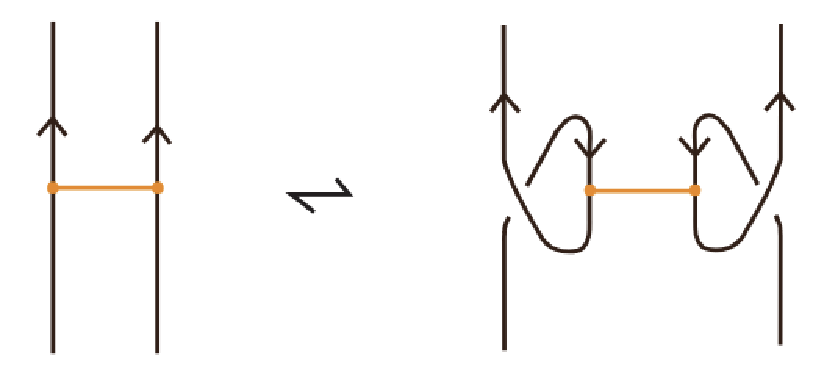}
\end{center}
\caption{Braiding preparation for two parallel bonded arcs oriented upward using RVT-moves.}
\label{orbond1}
\end{figure}

\noindent {\it Step 3:}  Finally, we apply the braiding algorithm of \cite{LR1} for the link diagram, ignoring the bonds. After eliminating all up-arcs we obtain a standard bonded braid, the closure of which is -by construction- isotopic to the original oriented bonded link diagram. 
\smallbreak

\noindent {\it Step 4:}  In the end of the process some braid strands may cross over or under some bonds. Using the braided vertex slide moves (recall Figure~\ref{gb2}) we bring them to tight form. 
The algorithm above provides a proof of Theorem~\ref{alexbl}.
\end{proof}

\begin{remarks}\rm\,
\begin{itemize}
\item[(a)] Note that in the rigid vertex category it is not possible to braid two anti-parallel strands that are connected by a bond. This is because braiding such strands necessarily requires the use of a topological vertex twist at one of the nodes, a move that is forbidden in the rigid vertex category. For this reason we restricted our focus to the topological vertex category. 

\item[(b)] The vertex slide moves for bringing the bonds to tight form (recall Figure~\ref{regulartotight}) could have been already applied  after Step 1 or Step 2. In the end we might still need to apply the braided vertex slide moves for new braid strands.
\end{itemize}
\end{remarks}


\section{Bonded braid equivalences}\label{mtbb}

In this section we formulate and prove two bonded braid equivalences for topological bonded braids: the bonded $L$-equivalence for standard bonded braids and  the tight bonded $L$-equivalence for tight bonded braids.  For proving a braid equivalence we need to have the diagrams in  general position, as described in the proof of the bonded braiding algorithm, to which we now add the {\it triangle condition}, whereby any two sliding triangles of up-arcs do not intersect. This is achieved by further subdivision of up-arcs, if needed, and it allows any order in the sequence of the braiding moves. We note here that the interior of a sliding triangle may intersect a bond.

We proceed with introducing the notion of $L$-moves in the bonded setting.In the classical setting, the $L$-moves naturally generalize the stabilization moves in the Markov Theorem for classical braids, since an $L$–move is equivalent to adding in a braid a positive or a negative crossing, so that two braids that differ by an $L$-move have isotopic closures. Moreover, as shown in \cite{LR1}, the $L$-moves can also realize conjugation for classical braids.  On the other hand, an $L$–move can be created by braid isotopy, stabilization and conjugation \cite{LR1}. The $L$–move approach to Markov-type theorems is flexible and powerful for formulating braid equivalences in practically any topological setting. They prove to be particularly useful in settings where the  braid analogues do not have an apparent algebraic structure. In \cite{LR1, La, KaLa} $L$-moves and braid equivalence theorems are presented for different knot theories.  

\begin{definition}\label{lmdefn}\rm
An {\it $L$-move} on a bonded braid $\beta$, consists in the following operation or its inverse: we cut an arc of $\beta$ open and we pull the upper cutpoint downward and the lower upward, so as to create a new pair of braid strands with corresponding endpoints (on the vertical line of the cutpoint), and such that both strands run entirely {\it over} or entirely {\it under}  the rest of the braid (including the bonds). Pulling the new strands over will give rise to an {\it $L_o$-move\/} and pulling under to an {\it  $L_u$-move\/}. By definition, an $L$-move cannot occur in a bond or at a node. By a general position argument, the new pair of strands does not pass by a crossing or a node. Figure~\ref{Lb} shows $L_o$ and $L_u$ moves taking place above or below a bond, on its  attaching strand. 

Furthermore, a {\it tight $L$-move} on a tight bonded braid is defined as above, with the addition that, if the new strands  are vertically aligned with a bond, the strands can be pulled to the side, right or left,  using the braided vertex slide moves as described in Figure~\ref{gb2}, see also Figure~\ref{Lbcross}, so as to remain in the tight category. A pulled away $L$-move in the tight category shall be called {\it tight $L$-move}. 
 Tight $L$-moves include $L$-moves that do not cross bonds. 

$L$-moves (resp. tight $L$-moves) together with bonded braid isotopy generate an equivalence relation in the set of all bonded braids (resp. tight bonded braids), the {\it $L$-equivalence} (resp. tight $L$-equivalence).
\end{definition}

\begin{figure}[H]
\begin{center}
\includegraphics[width=4.9in]{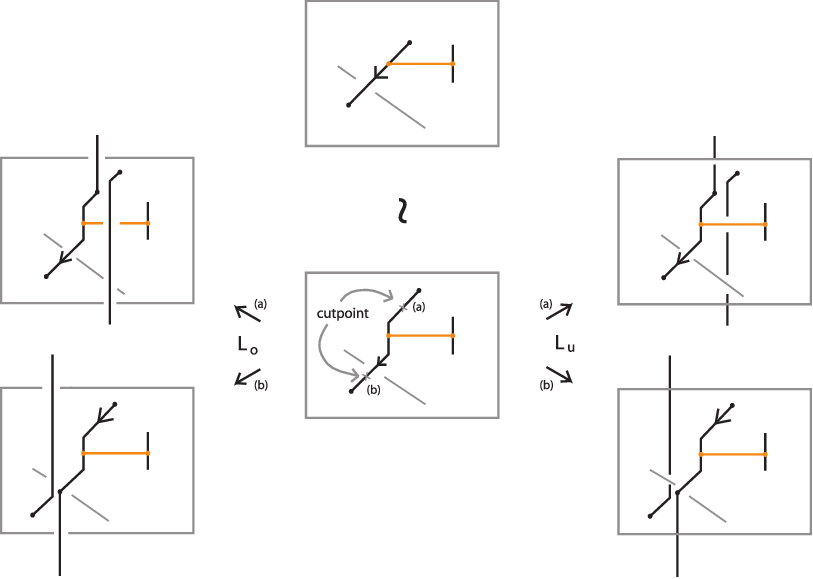}
\end{center}
\caption{$L$-moves on a strand with a bond.}
\label{Lb}
\end{figure}

\begin{note}\label{contractingL}
We note that the closures of two bonded braids that differ by  an $L$-move are rigid vertex isotopic. See second illustration of Figure~\ref{Lbcross}, where the closure of the $L$-move contracts to the original arc. 
\end{note}

\begin{note}\label{inboxcrossing}
Like in the classical setting, an $L$-move is equivalent to introducing a crossing in the bonded braid formed by the new pair of strands, which can be pulled to the (far) right (or left) of the bonded braid using bonded braid isotopy. View Figure~\ref{Lbcross}. Note that in the closure, this formation contracts to an R1 move. 
\end{note}

\begin{figure}[H]
\begin{center}
\includegraphics[width=5.2in]{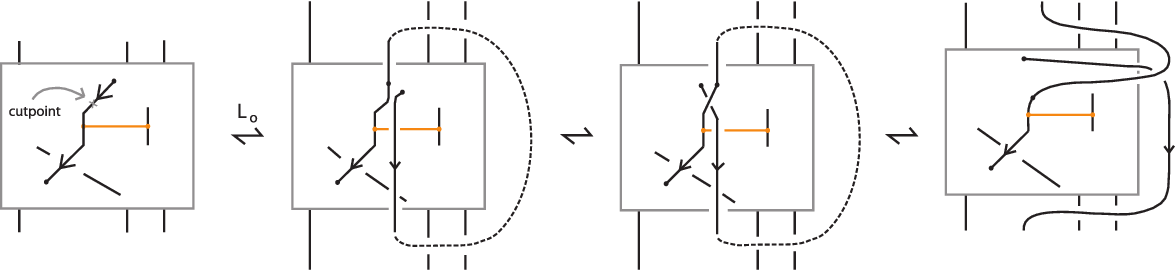}
\end{center}
\caption{A crossing formed after an $L$-move is performed, and pulling the strands of an $L$-move  away from the bond.}
\label{Lbcross}
\end{figure}

We consider now the left hand  and the right hand side bonded diagrams  in Figure~\ref{orbond}, which differ only by the one crossing which can be positive or negative.  We braid the two bonded diagrams using the braiding moves described in the proof of Theorem~\ref{alexbl}. Examining carefully the resulting bonded braids leads to the following definition.

\begin{definition}\label{bondedL}\rm
A {\it bonded $L$-move} between two bonded braids resembles an $L$-move with an in-box crossing. More precisely, a {\it bonded $L_o$-move} (resp. a {\it bonded $L_u$-move})  consists in the following operation or its inverse: we cut an arc adjacent to a bond node and create with the two cutpoints a crossing of specific type. We then pull the two ends, the upper upward  and the lower downward, so as to create a new pair of vertically aligned braid strands, such that both strands run entirely {\it over} (resp. entirely {\it under}) the rest of the bonded braid (including the bonds). The choice of the  crossing is determined by the following property: a vertex slide move with the arc of the crossing not adjacent to the bond node cannot give rise to a classical $L_o$-move with a crossing (resp. a classical $L_u$-move with a crossing). More precisely, if the two cutpoints lie in the upper right arc of the H-region of a bond, the crossing is positive, while if they lie in the lower right arc of the H-region, the crossing is negative. See  Figure~\ref{bondedL}. If the two cutpoints lie in the upper left arc of the H-region of a bond, the crossing is negative, while if they lie in the lower left arc, the crossing is positive. 

Furthermore, a {\it tight bonded  $L$-move} is a bonded  $L$-move defined in analogy to a tight  $L$-move.
\end{definition} 

\begin{figure}[H]
\begin{center}
\includegraphics[width=5.2in]{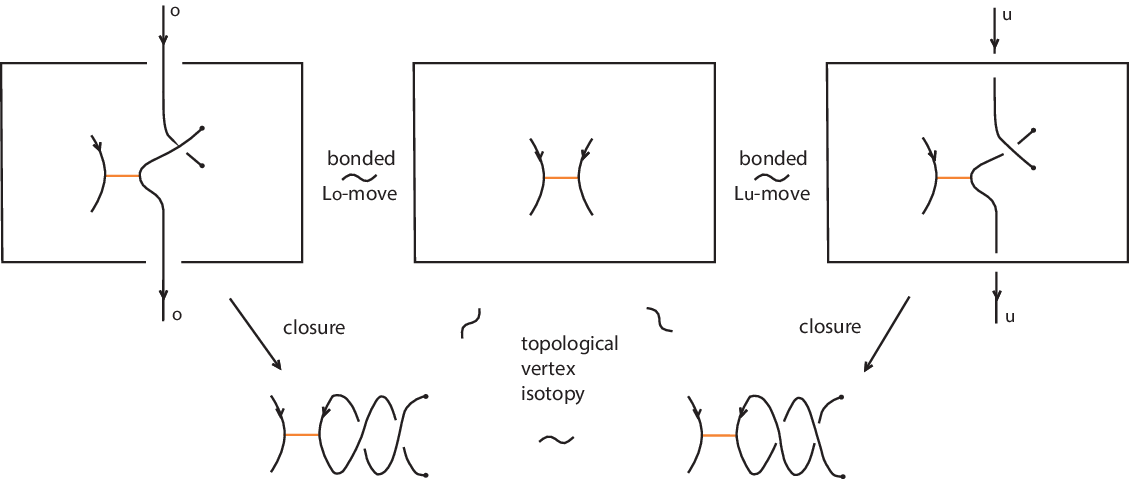}
\end{center}
\caption{Bonded $L$-moves.}
\label{bondedL1}
\end{figure}

\begin{note}\label{bondedL_closure}
As indcated in Figure~\ref{bondedL1}, the closures of two bonded braids that differ by a bonded $L$-move  are topologically vertex equivalent.  
\end{note}

\begin{definition}\label{bondedLequiv}\rm
$L$-moves (resp. tight $L$-moves)  together with bonded $L$-moves (resp. tight bonded $L$-moves) and bonded braid isotopy generate an equivalence relation in the set of all bonded braids (resp. tight bonded braids), the {\it bonded $L$-equivalence} (resp. {\it tight bonded $L$-equivalence}).
\end{definition} 

We are now in a position to state one of the main results of our paper.

\begin{theorem}[{\bf Bonded $L$-equivalence for topological bonded braids}] \label{markbll}
 Two bonded braids upon closure give rise  to topologically vertex isotopic oriented standard bonded links  if and only if they can be obtained one from the other by a finite sequence of bonded braid isotopy  and the following moves:
\[
\begin{array}{lllcll}
1. & L-moves &  &  &  & \\
2.  & Bonded \ L-moves &  &  &  & \\
3. & Bond \ Commuting: &  \alpha\, b_{i,j} & \sim & b_{i,j} \, \alpha, & {for}\ \alpha,\, b_{i,j} \in BB_n\\
\end{array}
\]
Furthermore, two tight bonded braids upon closure give rise  to topologically vertex isotopic oriented tight bonded links  if and only if they can be obtained one from the other by a finite sequence of tight bonded braid isotopy  and the following moves:
\[
\begin{array}{lllcll}
1. & {Tight \ L-moves} &  &  &  &\\
2.  & Tight \ bonded \ L-{moves} &  &  &  & \\
3. & {Elementary \ Bond \ Commuting:} &  \alpha\, b_i & \sim & b_i \, \alpha, & {for}\ \alpha,\, b_i \in BB_n\\ 
\end{array}
\]
\end{theorem}

The bond commuting is illustrated in Figure~\ref{bcm}.
\begin{figure}[H]
\begin{center}
\includegraphics[width=3in]{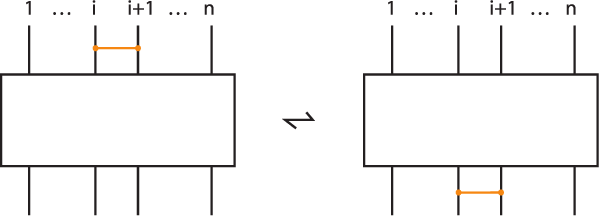}
\end{center}
\caption{Tight bond commuting.}
\label{bcm}
\end{figure}

\begin{proof} 
For the one direction, the closures of two standard/tight bonded braids that differ by the moves of either statement are clearly topologically vertex equivalent. Indeed, 
the closures of $L$-moves are discussed in  Notes~\ref{contractingL} and~\ref{inboxcrossing} (recall Figure~\ref{Lbcross}), for the closures of bonded $L$-moves recall Figure~\ref{bondedL1} and Note~\ref{bondedL_closure}, while bonded commuting is realized via planar isotopy.

For the converse, in order to ensure that the stated moves are sufficient we need to examine any choices made  for bringing a bonded diagram to general position and during the braiding algorithm, and show that they result in bonded $L$-equivalent bonded braids. Similarly, that  any bonded isotopy moves on a bonded diagram correspond to bonded $L$-equivalent bonded braids. We shall only examine choices involving bonds. All others are proved as in the classical case \cite{LR1}.

The first choice made for bringing a bonded diagram to general position is when bringing a vertical bond to the horizontal position, recall Figure~\ref{verticalbond}. Let $D_1, D_2$ be two oriented bonded diagrams that differ by one such move, from the one horizontal position to the other.  Figure~\ref{verticalbond_equiv} demonstrates the $L$-equivalence of the corresponding bonded braids for the case of parallel attaching arcs at the nodes, after braiding the regions of the two nodes. The other cases are proved likewise. The arrow indication on the bond is placed for facilitating the reader in following the different directions. Note that, if some arcs cross the bonds, these can be pulled away in both diagrams using the same vertex slide moves, so they will be braided identically. Therefore, the moves can be assumed to be local, and that all other up-arcs in both diagrams are braided, so that we can compare in the figure the final braids.

\begin{figure}[H]
\begin{center}
\includegraphics[width=5.5in]{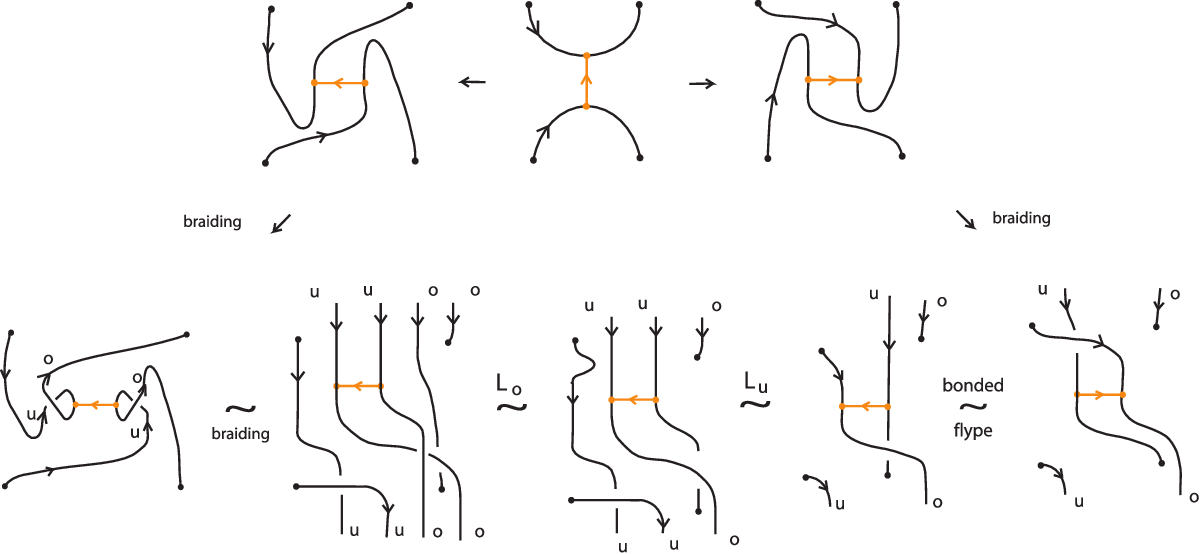}
\end{center}
\caption{The choice of bringing a vertical bond to horizontal gives rise to $L$-equivalent bonded braids.}
\label{verticalbond_equiv}
\end{figure}

Another choice made during the braiding algorithm is when applying a TVT-move, as exemplified in Figure~\ref{orbond}, so as to prepare our diagram for braiding. The crossing involved in the move can be positive or negative. One can then easily verify that the resulting braids are bonded $L$-equivalent.

Let now $D_1, D_2$ be two bonded diagrams that differ by a topological bonded isotopy move.  For classical planar isotopies and the classical Reidemeister moves the reader is referred to \cite{LR1}, as the proofs pass intact in this setting. Note that the bond commuting move as well as the move we examined above, with tight bonds, comprise  bonded planar isotopy moves. The moves of type Reidemeister 2 and~3 of Theorem~\ref{reid_standard} with one bond, all end up being invisible in the bonded braid after completing the braiding.  Suppose next that $D_1, D_2$ differ by a topological vertex twist (TVT) move. We have two cases (if we overlook the signs of the crossings), illustrated in Figure~\ref{orientedTVT}.

\begin{figure}[H]
\begin{center}
\includegraphics[width=4.2in]{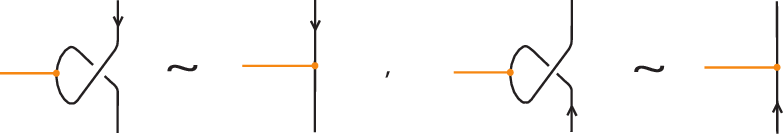}
\end{center}
\caption{The two types of oriented TVT moves.}
\label{orientedTVT}
\end{figure}

The first type of oriented TVT (left hand side of Figure~\ref{orientedTVT}) is treated in  Figure~\ref{orientedTVTa}, where we first braid the region of the node in the one diagram. By the triangle condition we may assume that the rest of the diagrams are braided, so that in the figure we compare the final bonded braids. 

\begin{figure}[H]
\begin{center}
\includegraphics[width=5.5in]{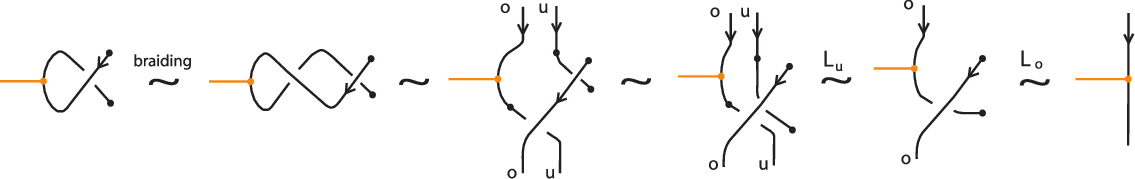}
\end{center}
\caption{$L$-equivalence of one type of an oriented TVT move.}
\label{orientedTVTa}
\end{figure}

\noindent The second type of oriented TVT (right hand side of Figure~\ref{orientedTVT}) is straightforward as shown in  Figure~\ref{orientedTVTb}, where we braid the region of the node in the one diagram using the same crossing as in the other diagram.

\begin{figure}[H]
\begin{center}
\includegraphics[width=2in]{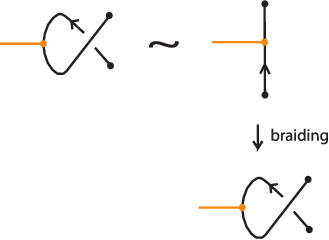}
\end{center}
\caption{Braiding consistently the node in the oriented TVT move results in identical diagrams.}
\label{orientedTVTb}
\end{figure}

We finally check the vertex slide (VS) moves. The three representing cases of oriented VS moves with the middle arc being an up-arc are illustrated in Figures~\ref{VDdown}, \ref{VSantiparallel} and \ref{VSup}. The moves are considered local so that all other braiding is done and we can compare the final braids. Figure~\ref{VDdown} shows the case of parallel down-arcs at the bonding sites. After the performance of the $L$-moves we see that the move is invisible on the bonded braid level. 

\begin{figure}[H]
\begin{center}
\includegraphics[width=4.1in]{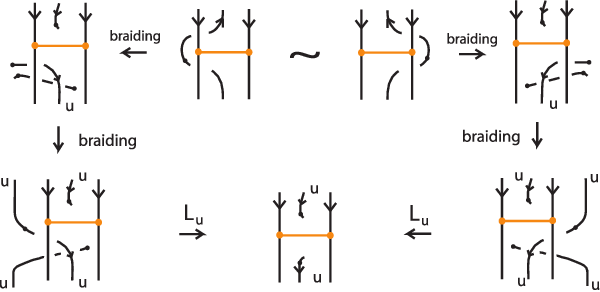}
\end{center}
\caption{An oriented VS move with parallel down-arcs and its braiding analysis.}
\label{VDdown}
\end{figure}

\noindent Figure~\ref{VSantiparallel} shows the case of antiparallel arcs at the bonding sites. After braiding the up-arc with a TVT move, we perform a classical R3 move, which ensures $L$-equivalent bonded braids, and we arrive at the formation of the previous case.

\begin{figure}[H]
\begin{center}
\includegraphics[width=3.8in]{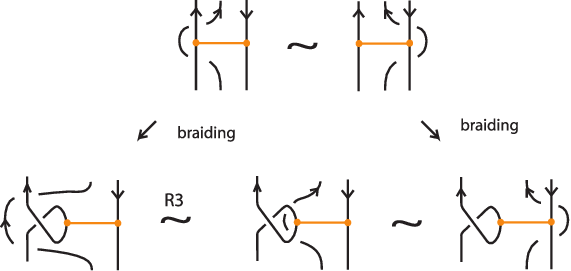}
\end{center}
\caption{An oriented VS move with antiparallel down-arcs and its braiding analysis.}
\label{VSantiparallel}
\end{figure}

\noindent Finally, Figure~\ref{VSup} shows the case of parallel up-arcs at the bonding sites. Clearly this move rests also on the previous cases after braiding  up-arcs at the nodes with  TVT moves and performing  R3 moves.

\begin{figure}[H]
\begin{center}
\includegraphics[width=1.7in]{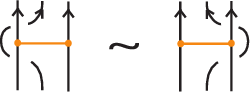}
\end{center}
\caption{The braiding analysis of an oriented VS move with parallel up-arcs reduces to the previous cases.}
\label{VSup}
\end{figure}

\noindent Note that, if some arcs cross the bonds in either one of the three cases, these can be pulled away in both diagrams involved in the move, using the same vertex slide moves, so they will be braided identically. In the course of proving this Theorem it becomes apparent that it suffices to assume that all bonds are contracted to tight bonds. Likewise  all $L$-moves can be assumed to be tight $L$-moves by bonded braid isotopy.  From the proof above it follows that, eventually, we only need to check the moves of Theorem~\ref{tightequiv}. 

We have checked all moves of Theorems~\ref{reid_standard} and~\ref{tightequiv}, so the proof of both statements is completed.
\end{proof}

\begin{note}
It is important to emphasize on the fact that {\it bond commuting} (in the closure of bonded braids) constitute equivalence moves for bonded braids, since these moves are not captured by the $L$-moves. The situation is in direct comparison with the $L$-move equivalence for singular braids, where the commuting of a singular crossing is imposed in the equivalence (cf. \cite{La}).
\end{note}

The $L$-moves naturally generalize the stabilization moves for classical braids, since an $L$–move is equivalent to adding a positive or a negative crossing (recall Note~\ref{inboxcrossing} and Figure~\ref{Lbcross}). Moreover, as shown in \cite{LR1}, the $L$-moves can also realize conjugation for classical braids and this carries through also to bonded braids. On the other hand, as demonstrated in Figure~\ref{Lbcross}, an $L$–move can be created by braid isotopy, stabilization and conjugation. Hence, we may replace moves (1) of Theorem~\ref{markbll} and, along with the algebraization of the bonded $L$-moves, we can obtain the analogue of the Markov theorem for bonded braids. This will be discussed in a further paper.

\section{The theory of enhanced bonded links \& braids}\label{fbb}

We now extend the bonded knot theory by introducing \emph{enhanced bonds}. In an \emph{enhanced bonded link}, each bond is assigned one of two types, conventionally termed \emph{attracting} vs.\ \emph{repelling} bonds (see Figure~\ref{fblink1}). These types are meant to model different physical interactions: for example, an attracting bond might represent a bond arising from an attractive force (like a disulfide bond pulling two parts of a protein together), whereas a repelling bond may model an effective constraint that prevents two regions from approaching too closely, for example due to steric hindrance or electrostatic repulsion. 
 Topologically, we can regard an attracting or a repelling bond as the same kind of embedded arc as before,  but  we treat them as  `inverses' of one another in an algebraic sense. We will formalize this by allowing bonds to be cancelling inverses on the level of enhanced bonded braids. 

In this section, we first define enhanced bonded links and the moves that relate them. Then we introduce the enhanced bonded braid group, denoted $EB_n$, which extends the bonded braid monoid by adding inverses for the bond generators. We then describe analogues of the Alexander and Markov theorems and $L$-equivalence in the enhanced context. We believe that the theory of enhanced bonded links can serve as a better model for biological knotted objects, as well as in other physical situations.

\subsection{Enhanced Bonds: Attracting vs.\ Repelling}

\begin{definition} \label{forcedbld}\rm 
An {\it (oriented) enhanced bonded knot/link}  is a pair $(L, B)$, where $L$ is an (oriented) link in $S^3$, and $B = B_+ \cup B_-$ is a finite set of bonds as defined in Definition~\ref{def:bonded} and such that each element of the subset $B_+$ is equipped  with two arrows pointing toward one another, and each element of the subset $B_-$ is equipped  with two arrows pointing away from one another. Call elements of $B_+$  \emph{attracting  bonds} and elements of $B_-$  \emph{repelling bonds}. For an example view Figure~\ref{fblink1} where the enhanced bonds are standard. Moreover, two adjacent bonds are called {\it parallel} if together they form two opposite sides of a standardly embedded rectangular band, so that the other two sides are link arcs, and such the band can be contracted by isotopy to a regularly projected planar tight rectangle (in the sense that its interior does not intersect other arcs or bonded arcs). Further, if we have two adjacent parallel bonds of opposite type, then they cancel, meaning that the pair of bonds can be removed. 
An {\it enhanced bonded link diagram}  is a diagram of an (oriented) enhanced bonded knot/link.
\end{definition}  

\begin{figure}[H]
\begin{center}
\includegraphics[width=4.4in]{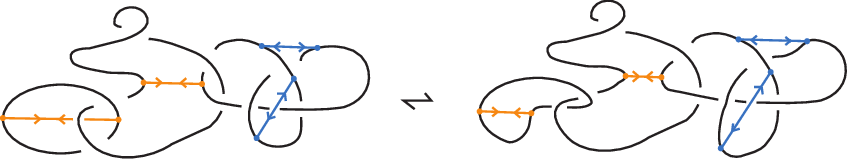}
\end{center}
\caption{Interpretation of enhanced bonds as `attracting' and `repelling'.}
\label{fblink1}
\end{figure}

Regarding  isotopy, the moves we introduced for bonded links do not alter the types assigned to the bonds in enhanced bonded links. So, {\it enhanced bonded isotopy} is defined in the same way as bonded isotopy, with the additional requirements that the bond type (attracting or repelling) assigned to each bond remains fixed throughout the isotopy, and that  parallel bonds of opposite type cancel, view Figure~\ref{bondcancllation}. Both isotopy categories, topological and rigid vertex, continue to apply in this setting, as do the three diagrammatic forms: long, standard, and tight enhanced bonded links. 

\begin{figure}[H]
\begin{center}
\includegraphics[width=1.5in]{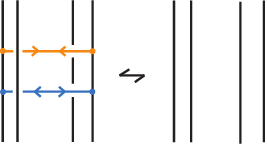}
\end{center}
\caption{Cancellation of parallel bonds of opposite type.}
\label{bondcancllation}
\end{figure}

\begin{remark}
Note that it is possible to generalize the definition of enhanced bonded link diagrams to include bonds that are without the above form of enhancement (so that they do not have inverses) and also one can include many types of bonds, with or without inverses, if these are needed. We retain the above definition in our discussion for the sake of simplicity.
\end{remark}

\subsection{The enhanced bonded braid group}\label{ebbgr}

Recall that bonded braids are defined as classical braids equipped with embedded horizontal simple arcs, the bonds. Similarly, {\it enhanced bonded braids} are classical braids equipped with two different types of bonds, the attracting  and the  repelling bonds. By abuse of notation, we shall denote by $b_i$ an attracting {\it elementary enhanced bond}  and by $b_i^{-1}$ we shall denote the corresponding repelling  bond. Moreover, we will denote by $b_{i,j}$ an attracting bond  between the $i^{th}$ and $j^{th}$ strands of an enhanced bonded braid and by $b_{i,j}^{-1}$ the corresponding repelling  bond, such that when we have  two consecutive bonds of different types between the same strands of an enhanced bonded braid, they cancel out (see Figures~\ref{bondcancllation} and \ref{fb2}). 

\smallbreak
Recall  that the singular braid monoid $SB_n$ is isomorphic to the bonded braid monoid, $BB_n$ (Theorem~\ref{singbondedmonoid}). Also that, as shown in \cite{FKR}, \( SB_n \) embeds into a group, called the singular braid group. This proves that  the argument in \cite{FKR} extends to the bonded braid context, so that the monoid also embeds into a group, corresponding the  two distinct types of enhanced bonds, attracting and repelling, to the  two distinct types of marked singular crossings, as illustrated in Figure~\ref{sing}.

\begin{figure}[H]
\begin{center}
\includegraphics[width=3.8in]{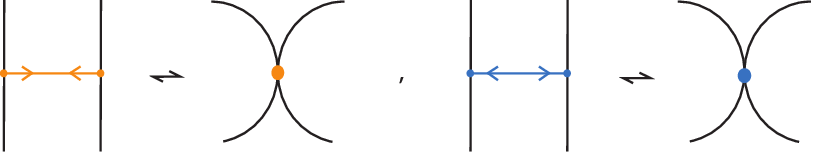}
\end{center}
\caption{The bonds as marked singular crossings.}
\label{sing}
\end{figure}

Hence, we have shown that:
\begin{theorem}

The bonded braid monoid embeds into a group,  the \emph{enhanced bonded braid group}.
\end{theorem}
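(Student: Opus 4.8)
The plan is to exploit the isomorphism $BB_n \cong SB_n$ established in Theorem~\ref{singbondedmonoid} and transfer the known group-embedding result for the singular braid monoid to the bonded setting. Recall that Fenn, Keyman and Rourke \cite{FKR} proved that $SB_n$ embeds into a group, the \emph{singular braid group} $SG_n$, obtained by formally adjoining two-sided inverses $\tau_i^{-1}$ to the singular generators $\tau_i$ while keeping the braid generators $\sigma_i$ together with the defining relations of $SB_n$ (now read in the group). The key point of their argument is that the natural monoid homomorphism $SB_n \to SG_n$ is \emph{injective}, i.e.\ no two distinct monoid words become equal once inverses are allowed; this is what makes the embedding genuine rather than a mere quotient.

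The first step is to make precise the target group on the bonded side. I would define the enhanced bonded braid group $EB_n$ by the presentation with generators $\sigma_i^{\pm 1}$ and $b_i^{\pm 1}$ ($1 \le i \le n-1$), subject to exactly the relations of the reduced (tight) presentation of $BB_n$ from Theorem~\ref{thm:tightbonded}, now interpreted as group relations, together with the two-sided cancellation $b_i\, b_i^{-1} = b_i^{-1}\, b_i = 1$ that realizes diagrammatically the cancellation of two adjacent parallel bonds of opposite type (Figure~\ref{bondcancllation}, Figure~\ref{fb2}). Under the identification $b_i \mapsto \tau_i$, $\sigma_i \mapsto \sigma_i$ of Theorem~\ref{singbondedmonoid}, this presentation coincides term-for-term with that of the singular braid group $SG_n$; hence $EB_n \cong SG_n$ as groups.

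The second step is the embedding itself. There is an evident monoid homomorphism $\iota \colon BB_n \to EB_n$ sending each generator to the same symbol; I must show it is injective. Rather than reprove injectivity from scratch, I would invoke the naturality of the isomorphism $\varphi \colon BB_n \xrightarrow{\ \sim\ } SB_n$: the square formed by $\iota$, the singular embedding $j \colon SB_n \hookrightarrow SG_n$ of \cite{FKR}, $\varphi$, and the group isomorphism $EB_n \cong SG_n$ commutes by construction, because all four maps act as the identity on generators (sending $\sigma_i \mapsto \sigma_i$ and $b_i \mapsto \tau_i$). Since $j$ is injective and $\varphi$ is an isomorphism, injectivity of $\iota$ follows by the two-out-of-three property of injective maps in a commuting square. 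This is precisely the content of ``the argument in \cite{FKR} extends to the bonded braid context.''

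The main obstacle I anticipate is the verification that the two presentations really do match, including all \emph{variants} of the bond relations (the long bonds $b_{i,j}$, the matching-crossing-sequence and uniform-position relations of Figures~\ref{twobonds}--\ref{bondcros}). Theorem~\ref{thm:tightbonded} reduces $BB_n$ to the elementary bonds $b_i$ and gives a finite list literally identical to the $SB_n$ relations, so on the reduced generating set the match is immediate; the care needed is to confirm that adjoining $b_i^{-1}$ with the cancellation relation does not secretly impose any further relation among the $\sigma_i$ (which would collapse part of the braid group) and that the diagrammatic cancellation of Definition~\ref{forcedbld} corresponds exactly to the algebraic relation $b_i b_i^{-1} = 1$ and to no stronger identity. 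Once this bookkeeping is in place, the embedding is a formal consequence of \cite{FKR} via Theorem~\ref{singbondedmonoid}, and the theorem is proved.
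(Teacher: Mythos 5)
Your proposal is correct and follows essentially the same route as the paper: both invoke the isomorphism $BB_n \cong SB_n$ of Theorem~\ref{singbondedmonoid} together with the Fenn--Keyman--Rourke embedding of the singular braid monoid into the singular braid group, identifying the two bond types with the two types of marked singular crossings. Your write-up is in fact more explicit than the paper's about why injectivity transfers across the commuting square, but the underlying argument is the same.
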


The above results lead to the definition of the {\it enhanced bonded braid group} $EB_n$ with  operation the  concatenation of bonded braids in the set of enhanced bonded braids on $n$-strand.  Note that as in the case of bonded braids, we will denote by $b_{i, j}$ all possible sequences of $j-i+1$ {\it o}'s and {\it u}'s, that indicate the type of the crossings formed between the enhanced bond and the strands of the braid, and by $b_{i, j}^{-1}$ its corresponding inverse in the enhanced bonded braid group. Finally, isotopy between two enhanced bonded braids is defined in the same way as isotopy between bonded braids, since these isotopy moves do not alter the type on the enhanced bonds. Then with the same  reasoning as earlier we obtain:

\begin{theorem}\label{fbbmon}\rm
The {\it  enhanced tight bonded braid group} $EB_{n}$ is the group generated by the classical invertible braid generators \( \sigma_1, \ldots, \sigma_{n-1} \), the {\it attracting bonds}  $b_1, \ldots, b_{n-1}$ and the {\it repelling bonds}  $b_1^{- 1}, \ldots, b_{n-1}^{- 1}$, collectively called {\it enhanced bonds} (see Figure~\ref{fb1}), with operation the usual braid concatenation. The generators satisfy the same relations as the generators of the tight bonded braid monoid, $BB_{n}$, together with the following relations:
\[
b_i\, b_i^{-1}\ =\ 1\ =\ \ b_i^{-1}\, b_i,\  {\rm for\ all}\ i.
\]
\end{theorem}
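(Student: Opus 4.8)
The plan is to read Theorem~\ref{fbbmon} as the identification of a presentation for the group $EB_n$ whose existence was just established, and to obtain that presentation by transporting the known presentation of the singular braid group across the isomorphism $BB_n \cong SB_n$ of Theorem~\ref{singbondedmonoid}. The three ingredients I would draw on are: the isomorphism $BB_n \cong SB_n$ (Theorem~\ref{singbondedmonoid}); the result of \cite{FKR} that $SB_n$ embeds into the singular braid group $SG_n$, the latter being presented by the singular-monoid relations together with invertibility of the singular generators $\tau_i$; and Lemma~\ref{lembw}, which writes any bond as a braid word conjugate of an elementary bond.

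First I would check generation. By Lemma~\ref{lembw} every long attracting bond $b_{i,j}$ is a word in the $\sigma_k^{\pm1}$ and an elementary bond $b_k$; since a repelling bond is the same embedded arc decorated with opposite arrows, the identical contraction applies to every $b_{i,j}^{-1}$. Hence $EB_n$ is generated by $\sigma_1^{\pm1},\dots,\sigma_{n-1}^{\pm1}$ together with $b_1,\dots,b_{n-1}$ and $b_1^{-1},\dots,b_{n-1}^{-1}$, as claimed. Next I would verify that the listed relations hold geometrically. The relations of Theorem~\ref{thm:tightbonded} are isotopy identities among the $\sigma_i$ and the attracting elementary bonds $b_i$; because the two bond types are topologically indistinguishable embedded arcs (differing only by the arrow decoration, which no isotopy move alters), each such relation holds verbatim with any $b_i$ replaced by $b_i^{-1}$. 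The only genuinely new relation is the cancellation $b_i\, b_i^{-1} = 1 = b_i^{-1}\, b_i$, which is precisely the cancellation of two adjacent parallel bonds of opposite type recorded in Definition~\ref{forcedbld} and depicted in Figure~\ref{bondcancllation}.

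Finally, completeness --- that these relations suffice and that $BB_n$ sits faithfully inside the resulting group --- is where the real work lies, and I would not reprove it from scratch. Under the identification $b_i \leftrightarrow \tau_i$ of Theorem~\ref{singbondedmonoid}, adjoining a formal inverse $b_i^{-1}$ subject only to cancellation is exactly the passage from $SB_n$ to $SG_n$ by inverting $\tau_i$. The group abstractly presented by our generators and relations is therefore isomorphic to $SG_n$, and the embedding theorem of \cite{FKR}, transported through Theorem~\ref{singbondedmonoid}, identifies it with the geometrically defined enhanced bonded braid group $EB_n$ supplied by the preceding theorem.

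The main obstacle is precisely this faithfulness: that no relation beyond cancellation is forced among enhanced bonded braids is not visible from the diagrammatics and rests entirely on the nontrivial embedding of the singular braid monoid into the singular braid group. Our only genuinely new verification is the observation that the repelling bonds satisfy no interaction relations other than those already satisfied by attracting bonds --- a consequence of the two bond types being the same embedded arc up to decoration --- so that the singular-braid-group argument of \cite{FKR} transfers intact and yields the stated presentation.
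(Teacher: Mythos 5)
Your proposal follows essentially the same route as the paper: the paper likewise justifies Theorem~\ref{fbbmon} by transporting the \cite{FKR} embedding of $SB_n$ into the singular braid group across the isomorphism $BB_n\cong SB_n$ of Theorem~\ref{singbondedmonoid}, identifying attracting/repelling bonds with the two marked singular crossings, and noting that the isotopy moves do not alter bond type so the $BB_n$ relations persist, with only the cancellation $b_i b_i^{-1}=1=b_i^{-1}b_i$ added. Your write-up is in fact a somewhat more explicit articulation (separating generation, verification of relations, and completeness) of the argument the paper compresses into ``with the same reasoning as earlier we obtain.''
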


\begin{figure}[H]
\begin{center}
\includegraphics[width=2.9in]{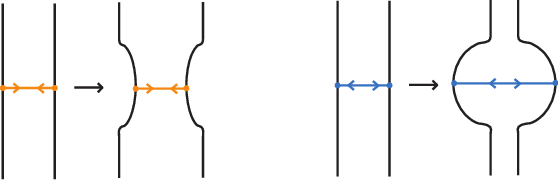}
\end{center}
\caption{The enhanced bonds $b_i$ and $b_i^{-1}$.}
\label{fb1}
\end{figure}

In Figure~\ref{fb2} we demonstrate the relations $b_i\, b_i^{-1}\ =\ 1\ =\ b_i^{-1}\, b_i$.

\begin{figure}[H]
\begin{center}
\includegraphics[width=1.9in]{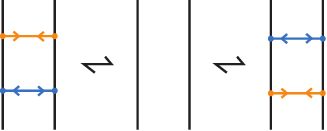}
\end{center}
\caption{The relations $b_i\, b_i^{-1}\ =\ 1\ =\ b_i^{-1}\, b_i$.}
\label{fb2}
\end{figure}

Note that when two enhanced bonds cancel the underlying knot is not affected by the cancellation.

We further note that, as in the tight bonded braid monoid  (recall Theorem~\ref{thm:irredundant}), the enhanced tight bonded braid group \( EB_n \) also admits an irredundant presentation:

\begin{theorem}\label{thm:irredundant_enh}
The enhanced tight bonded braid group \( EB_n \) admits an irredundant presentation with generators the classical invertible braid generators \( \sigma_1, \ldots, \sigma_{n-1} \) together with a single attracting bond generator \( b_1 \) and its inverse \( b_1^{-1} \), subject to the relations:
\[
\begin{array}{rcll}
\sigma_i\, \sigma_j & = & \sigma_j\, \sigma_i & \quad \text{for } |i-j|>1, \\[4pt]
\sigma_i\, \sigma_{i+1}\, \sigma_i  & = &  \sigma_{i+1}\, \sigma_i\, \sigma_{i+1} & \quad \text{for all } i, \\[4pt]
b_1\, {\sigma_j} & = & {\sigma_j}  \, b_1 & \quad \text{ for } j=1 \text{ \& } j>2, \\[4pt]
 b_1\, (\sigma_2 \sigma_1)(\sigma_1\, \sigma_2) & = &  (\sigma_2 \sigma_1)(\sigma_1\, \sigma_2) \, b_1 & \\[4pt] 
b_1 \, (\sigma_2 \sigma_1 \sigma_3 \sigma_2)^{-1}  \, b_1\, (\sigma_2 \sigma_1 \sigma_3 \sigma_2) & = &  (\sigma_2 \sigma_1 \sigma_3 \sigma_2)^{-1}  \, b_1\, (\sigma_2 \sigma_1 \sigma_3 \sigma_2) \, b_1  &
\end{array}
\]
\end{theorem}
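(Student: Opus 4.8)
The plan is to mirror the Tietze-transformation argument that produced the irredundant monoid presentation of Theorem~\ref{thm:irredundant}, now applied to the group presentation of $EB_n$ furnished by Theorem~\ref{fbbmon}, and to exploit the extra invertibility available in the group to collapse the bond--crossing commuting relations.

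First I would start from the full presentation of $EB_n$ given in Theorem~\ref{fbbmon}: generators $\sigma_1^{\pm1},\dots,\sigma_{n-1}^{\pm1}$, the attracting bonds $b_1,\dots,b_{n-1}$ and the repelling bonds $b_1^{-1},\dots,b_{n-1}^{-1}$, subject to the tight bonded braid monoid relations of Theorem~\ref{thm:tightbonded} together with $b_i\,b_i^{-1}=1=b_i^{-1}\,b_i$. As in the derivation preceding Theorem~\ref{thm:irredundant}, the relations $b_i\,\sigma_{i+1}\sigma_i=\sigma_{i+1}\sigma_i\,b_{i+1}$ express each elementary bond $b_i$ with $i>1$ as a braid conjugate of $b_1$, namely $b_i=(\sigma_i\sigma_{i-1})^{-1}\cdots(\sigma_2\sigma_1)^{-1}\,b_1\,(\sigma_2\sigma_1)\cdots(\sigma_i\sigma_{i-1})$. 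I would use these identities as Tietze moves to eliminate the generators $b_2,\dots,b_{n-1}$. The genuinely new point in the group setting is that, because conjugation by a braid word is an automorphism of $EB_n$, the inverse generators $b_i^{-1}$ with $i>1$ are eliminated \emph{simultaneously}: applying the same conjugating word to $b_1^{-1}$ produces $b_i^{-1}$, and the inversion relations $b_i\,b_i^{-1}=1$ then become immediate consequences of $b_1\,b_1^{-1}=1$. Thus the only bond generators that survive are $b_1$ and $b_1^{-1}$, with $b_1\,b_1^{-1}=1=b_1^{-1}\,b_1$ retained (automatically, since we work in a group).

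Next I would reduce the remaining relations exactly as in the monoid case. The commuting relations involving the eliminated $b_i$ either become trivial after substitution, or collapse as follows: substituting $b_2=(\sigma_1\sigma_2)\,b_1\,(\sigma_1\sigma_2)^{-1}$, obtained from the mirror relation $\sigma_i\sigma_{i+1}b_i=b_{i+1}\sigma_i\sigma_{i+1}$, into $b_2=(\sigma_2\sigma_1)^{-1}\,b_1\,(\sigma_2\sigma_1)$ yields the single relation $b_1\,(\sigma_2\sigma_1)(\sigma_1\sigma_2)=(\sigma_2\sigma_1)(\sigma_1\sigma_2)\,b_1$. Here I would record the one simplification peculiar to the group: in the monoid presentation the bond--crossing relations appear in the $\pm$ form $b_1\,\sigma_j^{\pm1}=\sigma_j^{\pm1}\,b_1$ (for $j>2$) and $b_1\,\sigma_1^{\pm1}=\sigma_1^{\pm1}\,b_1$, whereas in $EB_n$ the $\sigma_j$ are invertible, so $b_1\,\sigma_j=\sigma_j\,b_1$ immediately implies $b_1\,\sigma_j^{-1}=\sigma_j^{-1}\,b_1$ by conjugating both sides, and likewise for $\sigma_1$. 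Consequently each $\pm$ pair collapses to a single relation, giving precisely the list in the statement.

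The main obstacle I expect is bookkeeping rather than conceptual: one must check that no defining relation of $EB_n$ is lost when the $b_i$ and $b_i^{-1}$ with $i>1$ are removed, i.e.\ that every such relation is a consequence of the retained relations after substitution. Since Theorem~\ref{thm:irredundant} already establishes this for all the monoid relations, the only additional relations to account for are the inversion relations $b_i\,b_i^{-1}=1=b_i^{-1}\,b_i$, and, as noted above, these follow automatically from $b_1\,b_1^{-1}=1$ because the eliminating substitution is a conjugation. Hence the desired group presentation follows from the monoid computation of Theorem~\ref{thm:irredundant} together with this invertibility observation, and no new hard step arises.
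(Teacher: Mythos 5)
Your proposal is correct and follows essentially the same route as the paper, which states Theorem~\ref{thm:irredundant_enh} as an immediate analogue of Theorem~\ref{thm:irredundant} obtained by the same Tietze-transformation elimination of $b_2,\dots,b_{n-1}$ via the conjugation identities $b_i\,\sigma_{i+1}\sigma_i=\sigma_{i+1}\sigma_i\,b_{i+1}$, now carried out in the group $EB_n$ of Theorem~\ref{fbbmon}. Your additional observations — that the inverse generators $b_i^{-1}$ ($i>1$) are eliminated by the same conjugating words so that the inversion relations reduce to $b_1 b_1^{-1}=1$, and that invertibility of the $\sigma_j$ collapses each $\pm$ pair of commuting relations to a single one — correctly fill in the details the paper leaves implicit.
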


\subsection{Analogues of Alexander and Markov theorems for enhanced bonded braids}\label{almarebb}

In this subsection we present the analogues of the Alexander and the Markov theorems, as well as the $L$-equivalence for enhanced bonded braids. The main idea is that the bonded isotopy moves remain the same as for usual bonds, so enhanced bonds respect the braiding algorithm presented in \S~\ref{alsec}, and  the results we obtained for the bonded $L$-equivalence in \S~\ref{mtbb} also carry through. Hence we have the following:

\begin{theorem}[{\bf Braiding theorem for enhanced bonded links}] \label{falexbl}
 Every oriented topological standard enhanced bonded link can be represented isotopically as the closure of a standard resp. tight enhanced bonded braid. 
\end{theorem}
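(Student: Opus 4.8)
The plan is to reduce the enhanced braiding theorem directly to the braiding theorem for ordinary bonded links, Theorem~\ref{alexbl}, by observing that the enhancement is merely a decoration that is carried along unchanged by the braiding algorithm. First I would recall that an enhanced bond is, topologically, exactly the same embedded arc as an ordinary bond; the only extra data is the pair of arrows (pointing toward one another for an attracting bond in $B_+$, away from one another for a repelling bond in $B_-$). By the discussion preceding the theorem, every bonded isotopy move used in the braiding procedure --- planar isotopy, the Reidemeister-type moves on and across bonds, the vertex slide moves, and the topological vertex twists --- acts on the underlying arc and leaves each bond's type label fixed. In particular, none of these moves creates or destroys a bond; the only mechanism that removes a bond is the cancellation of two parallel bonds of opposite type (Figure~\ref{bondcancllation}), which the braiding algorithm never invokes.

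Second, I would run the braiding algorithm of Section~\ref{alsec} verbatim on a diagram of an oriented topological standard enhanced bonded link, treating each enhanced bond exactly as an ordinary bond and simply transporting its arrow decoration along each move. Concretely: put the diagram in bonded general position, bring every bond to horizontal position by planar isotopy (Step~1), clear the up-arcs that contain a node using the TVT moves of Figures~\ref{orbond} and~\ref{orbond1} according to whether the adjacent arcs are anti-parallel or parallel (Step~2), apply the classical braiding moves to the remaining up-arcs while keeping all bonds on down-arcs (Step~3), and finally push any strands that end up over or under a bond to the side by braided vertex slide moves to reach tight form (Step~4). At each step the enhancement is carried unchanged, so the output is a standard (resp.\ tight) enhanced bonded braid whose closure is, by the same construction as in Theorem~\ref{alexbl}, isotopic to the original enhanced bonded link.

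The one point that genuinely needs checking --- and which I expect to be the only real obstacle --- is that the node-clearing moves of Step~2 are compatible with the arrow decoration. These are precisely the TVT moves that reposition a bond whose node sits on an up-arc; because they merely slide and rotate the bond arc without interchanging its two endpoints in a way that would reverse the mutual sense of the arrows, the attracting/repelling type is preserved. As already noted for the ordinary case, the presence of an anti-parallel node forces the use of a topological vertex twist, so we must remain in the topological vertex category (where TVT is available), which is exactly the hypothesis of the statement. Since no cancellation of opposite-type bonds is ever triggered, the set $B_+\cup B_-$ and its partition into attracting and repelling bonds are preserved throughout, and the braiding algorithm therefore yields the desired enhanced bonded braid, completing the argument.
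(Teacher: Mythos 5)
Your proposal is correct and follows essentially the same route as the paper, which justifies Theorem~\ref{falexbl} by observing that enhanced bonds are topologically identical to ordinary bonds, that the isotopy moves used in the braiding algorithm of Section~\ref{alsec} do not alter bond types, and hence that the algorithm applies verbatim. Your additional explicit check that the Step~2 node-clearing moves preserve the arrow decoration and that no opposite-type cancellation is ever triggered is a careful elaboration of what the paper leaves implicit, not a different argument.
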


For Markov's theorem, now that bonds have inverses, one might wonder if we can allow a stabilization that introduces a bond together with its inverse. For instance, in an enhanced context, one could imagine a stabilization that adds both a $b_n^+$ and a $b_n^-$ such that their effect cancels in closure. Indeed, one might define a ``cancellation move'' where an attracting bond and a repelling bond that connect the same pair of points on a link can be removed. In the algebraic setting, that is exactly the relation $b_i^+ b_i^- = 1$, which means one may remove a $b^+$ and $b^-$ pair if they occur adjacent in the word. 

Before moving toward a braid equivalence theorem for enhanced bonded braids, we first need to generalize the notion of the $L$-moves on this setting. We define an $L$-move on an enhanced bonded braid as in Definitions~\ref{lmdefn} and \ref{bondedL}, by ignoring the types of the bonds (recall Figure~\ref{Lb}). It follows from the results \S~\ref{mtbb} that with the use of the $L$-moves on enhanced bonded braids, we obtain the $L$-move equivalence for enhanced bonded braids, by treating enhanced bonds as  simple bonds. Namely, we obtain the following:

\begin{theorem}[{\bf Bonded $L$-equivalence for enhanced bonded braids}] \label{fmarkbll}
Two enhanced bonded braids upon closure give rise  to topologically vertex isotopic oriented enhanced standard bonded links  if and only if they can be obtained one from the other by a finite sequence of enhanced bonded braid isotopy  and the following moves: 
\[
\begin{array}{lllcll}
1. & L-{moves} &  &  &  & \\
2.  & {Bonded} \ L-{moves} &  &  &  & \\
3. & {Enhanced\ Bond\ Conjugation:} &  \alpha & \sim & b_{i,j}^{\pm 1}\, \alpha\, b_{i,j}^{\mp 1},  & {for}\ \alpha,\, b_{i,j} \in EB_n \\ 
\end{array}
\]
Furthermore, two enhanced tight bonded braids upon closure give rise  to topologically vertex isotopic oriented enhanced tight bonded links  if and only if they can be obtained one from the other by a finite sequence of tight bonded braid isotopy  and the following moves: 
\[
\begin{array}{lllcll}
1. & {Tight \ L-moves} &  &  &  &\\
2.  & Tight \ Bonded \ L-{moves} &  &  &  & \\ 
3. & {Enhanced\ Elementary \ Bond\ Conjugation:} &  \alpha & \sim & b_i^{\pm 1}\, \alpha\, b_i^{\mp 1},  & {for}\ \alpha,\, b_i \in EB_n \\ 
\end{array}
\]
\end{theorem}

We can replace moves (1) of Theorem~\ref{fmarkbll} by considering stabilization moves and conjugation for classical braids and, along with the algebraization of the bonded $L$-moves, we obtain the analogue of the Markov theorem for enhanced bonded braids. This is the subject of a sequel work.

\section{The theory of (enhanced) bonded knotoids and braidoids} \label{sectl}

In this section we review the notion of a bonded knotoid and we extend it to the case of having long bonds. We further introduce the notion of bonded braidoids and their extensions to  enhanced bonded knotoids and braidoids. We start by recalling some results from \cite{T} and \cite{GGLDSK} on knotoids and bonded knotoids respectively.

\subsection{Bonded knotoids}

Knotoids were introduced by Turaev in \cite{T} as a generalization of 1-1 tangles by allowing the endpoints to be in different regions of the diagram. Namely: a {\it knotoid diagram} $K$ in an oriented surface $\Sigma$ is a generic immersion of the unit interval $[0, 1]$ into  
$\Sigma$ whose only singularities are transversal double points endowed with over/undercrossing data called crossings. The images of $0$ and $1$ under this immersion are called the endpoints of $K$ (leg and head of $K$ respectively) and are distinct from each other and from the double points (see Figure~\ref{mkoid} by ignoring the bonds). 

A {\it knotoid } in $\Sigma$ is then an equivalence class of knotoid diagrams in $\Sigma$ up to the equivalence relation induced by the Reidemeister moves (recall Figures~\ref{breid1}--\ref{breid2}), that take place away from the endpoints of the knotoid. A  {\it multi-knotoid diagram} in  $\Sigma$ is a generic immersion of the unit interval $[0, 1]$ and a number of copies of $S^1$ into  
$\Sigma$.  Note that in the theory of multi-knotoids, the pulling of a strand that is adjacent to an endpoint is not allowed, since this would result into trivial knotoid diagrams. These moves are illustrated in Figure~\ref{forb}, and they are called {\it forbidden moves} of multi-knotoids.
 A knotoid has a natural orientation from leg to head. A multi-knotoid diagram is {\it oriented } if orientations are also assigned to its closed components. 

\begin{figure}[H]
\begin{center}
\includegraphics[width=3in]{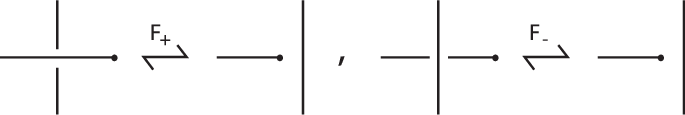}
\end{center}
\caption{The forbidden moves.}
\label{forb}
\end{figure}

{\it Bonded knotoids} were introduced in \cite{GGLDSK} as classical knotoids equipped with standard bonds. Here we we extend the definition to the case of multi-knotoids  having also long bonds. More precisely: 

\begin{definition}\rm
A {\it bonded multi-knotoid diagram} in $\Sigma$ is a pair $(K,B)$ of a  knotoid $K$ equipped with a set $B$ of bonds, defined as in the case of bonded links in Definition~\ref{def:bonded}. View Figure~\ref{mkoid}. Further, a {\it bonded linkoid diagram} is defined to be an immersion of a disjoint union of finitely many unit intervals whose images are knotoid diagrams, equipped with bonds.
\end{definition}  

\begin{figure}[H]
\begin{center}
\includegraphics[width=2.8in]{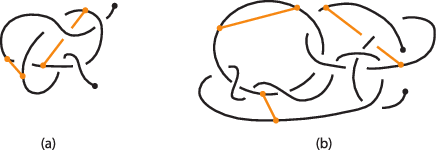}
\end{center}
\caption{A bonded knotoid (a) and a bonded multi-knotoid (b).}
\label{mkoid}
\end{figure}

As in the case of bonded links, bonded multi-knotoid diagrams in $\Sigma$ are liable to two types of equivalence relations: the {\it topological vertex equivalence} and the {\it rigid vertex equivalence}, induced by the moves of  topological vertex isotopy resp. rigid vertex isotopy (recall Proposition~\ref{breidthm}),  all taking place away from the endpoints of the bonded multi-knotoid. 

\begin{definition}\rm
An  equivalence class of bonded multi-knotoid diagrams in $\Sigma$ up to topological vertex equivalence is called a {\it topological bonded multi-knotoid} and up to rigid vertex equivalence it is called a {\it rigid bonded multi-knotoid}. 
\end{definition} 

In the theory of bonded knotoids w have the forbidden moves for the multi-knotoid arcs as well as for the bonds (see Figure~\ref{forb1}). We call these moves {\it bonded forbidden moves}.

\begin{figure}[H]
\begin{center}
\includegraphics[width=3.7in]{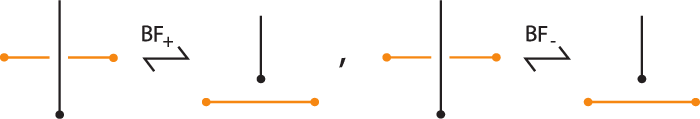}
\end{center}
\caption{The bonded forbidden moves.}
\label{forb1}
\end{figure}

\subsection{Types of closures for bonded multi-knotoids}

As noted in \cite{T}, the theory of knotoids suggests a new diagrammatic approach to the theory of knots, since a knotoid diagram gives rise to a classical knot if we connect its endpoints  with an extra simple arc. Similarly, for bonded multi-knotoids we define the following closure operations: 

\begin{definition}\rm
We call the bonded knot obtained by connecting the endpoints of a bonded knotoid diagram with an arc that goes under or over each arc and each bond it meets, the {\it underpass closure} and the {\it overpass closure} of the bonded knotoid respectively (see Figure~\ref{cloid1}).
\end{definition}

\begin{figure}[H]
\begin{center}
\includegraphics[width=4in]{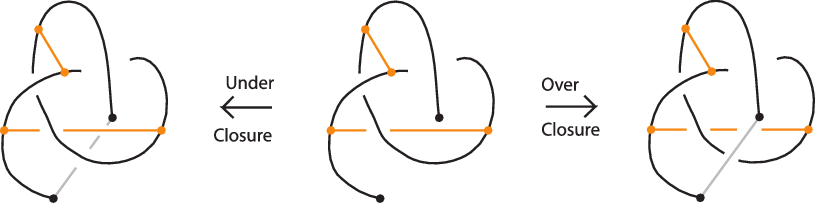}
\end{center}
\caption{The underpass and the overpass closures of a bonded knotoid.}
\label{cloid1}
\end{figure}

Obviously, different closures of a bonded multi-knotoid may result in non isotopic bonded links.  On the other hand, we can represent bonded links via bonded multi-knotoid diagrams, by cutting any arc and declaring the two ends as endpoints. If we fix the closure type we have the following result (proved as in \cite{T} for the case of knotoids): 

\begin{proposition}
Assuming a specific closure type, there is a well-defined surjective map from planar (or spherical) bonded multi-knotoids to bonded links.
\end{proposition}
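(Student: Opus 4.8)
The plan is to follow Turaev's argument for knotoids~\cite{T}, adapted to the bonded setting, fixing once and for all a single closure type---say the underpass closure, which I will denote $\kappa$. First I would define the map at the diagram level: to a bonded multi-knotoid diagram $(K,B)$ I assign the bonded link $\kappa(K,B)$ obtained by joining the leg and the head by an arc that passes \emph{under} every arc and every bond it meets (recall Figure~\ref{cloid1}). The first task is to verify that this assignment does not depend on the particular underpassing arc chosen. The key observation is that an arc lying entirely below the diagram never obstructs anything, so it can be slid across any link strand by R2 moves and under any bond by vertex slide (VS) moves; hence any two underpassing arcs joining the same pair of endpoints in $S^2$ are ambient isotopic rel endpoints in the complement of $(K,B)$ and produce ambient isotopic bonded links. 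This settles that $\kappa$ is well defined on diagrams, in both the topological and the rigid vertex categories, since the moves used (R2, VS) belong to both.

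Next I would prove that $\kappa$ descends to equivalence classes, i.e.\ that bonded-knotoid-equivalent diagrams have isotopic closures. It suffices to treat one generating move from Proposition~\ref{breidthm} (and its standard/tight refinements), each of which takes place inside a disc $D$ disjoint from the endpoints. By the previous paragraph I may isotope the closure arc off $D$---this is possible \emph{precisely because} $D$ contains no endpoint, so the underpassing arc can be pushed out of $D$ by sliding under. With the closure arc disjoint from $D$, the move is performed verbatim in the two closed diagrams, inside a disc not meeting the closure arc, and is therefore a legitimate bonded link isotopy. Consequently $\kappa(K_1,B_1)$ and $\kappa(K_2,B_2)$ are equivalent bonded links. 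I emphasise that the forbidden moves of Figures~\ref{forb} and~\ref{forb1} never need to be checked: they are excluded from bonded-knotoid equivalence, and although they would be admissible after closure, this only reflects the (expected) failure of injectivity, not an obstruction to well-definedness.

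For surjectivity I would argue again as in~\cite{T}. Given a bonded link $(L,B)$, choose any diagram and any edge of $L$; between two consecutive crossings or nodes this edge contains a sub-arc meeting nothing. Cutting $(L,B)$ at an interior point of such a clear sub-arc and declaring the two free ends the leg and head yields a bonded multi-knotoid diagram---the cut component becomes the open strand while the remaining components stay closed, which is exactly why \emph{multi}-knotoids are needed to reach links of several components. Since the removed sub-arc crosses nothing, it is simultaneously an underpassing and an overpassing arc; hence re-inserting the fixed closure arc recovers $(L,B)$ up to isotopy, and $\kappa$ is surjective.

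The main obstacle will be the first step, namely the independence of $\kappa$ from the routing of the closure arc: one must argue carefully that in $S^2$ any two arcs lying under the whole diagram and sharing the same endpoints are isotopic, controlling in particular the case where the leg and head sit in different complementary regions and where the arc must be pushed under bonds as well as link strands. Once this push-under lemma is established, invariance under the generating moves (via the disc-avoidance argument) and surjectivity (via opening a clear sub-arc) are routine, and the proposition follows in both isotopy categories.
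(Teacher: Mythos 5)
Your proposal is correct and follows essentially the same route as the paper, which simply defers to Turaev's argument for knotoids and the observation that cutting any clear sub-arc inverts the closure; you have just written out the standard details (sliding the underpassing arc via R2 and VS moves, pushing it off the disc supporting each generating move, and surjectivity by cutting). Nothing further is needed.
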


\begin{remark}\rm
In \cite{MLK} the authors introduce the notion of singular knotoids, that is, knotoid diagrams that contain also singular crossings, and they present a different type of closure operation for singular knotoids, the {\it singular closure}, whereby the closing arcs crosses singularly every arc it intersects. In analogy to the definition of singular closure (under certain conditions) in \cite{D1} the {\it pseudo closure} of knotoids is defined, where a pseudo knotoid diagram is a knotoid with some missing crossing information.
\end{remark}

Likewise, in the theory of bonded multi-knotoids, a different ``closure'' operation may be considered, whereby we introduce a bond whose nodes coincide with the endpoints of a bonded multi-knotoid. More precisely, we have the following definition:

\begin{definition}\rm 
The operation whereby the endpoints of a bonded multi-knotoid diagram are  connected with a standard bond shall be called {\it bonded closure}. In particular, if the connecting bond passes under resp. over each arc and each bond it meets, then we have the {\it underpass  bonded closure}, resp. the {\it overpass bonded closure} of the bonded multi-knotoid  (see Figure~\ref{fakecl}).
\end{definition}

\begin{figure}[H]
\begin{center}
\includegraphics[width=4.2in]{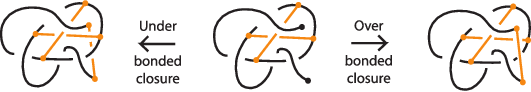}
\end{center}
\caption{Bonded closures of a bonded knotoid.}
\label{fakecl}
\end{figure}

Note that the bonded closure does not yield a bonded link. The bonded closure can be interpreted as an indication that the two endpoints of the multi-knotoid are attracted by a physical force. We believe that this closure operation with the above interpretation could play an important role in the study of open proteins.

The theory of enhanced bonded links carries through in complete analogy to the bonded multi-knotoids, to obtain the theory of {\it enhanced bonded multi-knotoids}. Only, for the bonded closure it rather makes sense to consider only the bonded closure with an attracting bond.

\subsection{(Enhanced) Bonded braidoids}

In \cite{GL1} braidoid diagrams are defined (similarly to classical braid diagrams), as systems of finite descending strands that involves one or two strands starting with or terminating at an endpoint that is not necessarily at the top or bottom lines of the defining region of the diagram. For details and examples the reader is referred to \cite{GL1}. Here we equip braidoids with bonds.

\begin{definition}\rm
A {\it  braidoid diagram} $B$ is a system of a finite number of arcs immersed in $[0, 1] \times [0,1] \subset \mathbb{R}^2$, where $\mathbb{R}^2$ is identified with the xt-plane, such that the t-axis is directed downward. The arcs of $B$ are the strands of $B$. Each strand  is naturally oriented downward, with no local maxima or minima, following the natural orientation of $[0, 1]$. Moreover, there are only finitely many intersection points among the strands, which are transversal double points endowed with over/under data, and are called crossings of $B$. A  braidoid diagram has two types of strands, the classical strands, i.e. braid strands connecting points on $[0, 1]\times \{0\}$ to points on $[0, 1] \times \{1\}$, and the {\it free strands} that either connect a point in $[0,1]\times \{0\}$ or in $[0,1]\times \{1\}$ to an {\it endpoint} located anywhere in $[0, 1]\times [0, 1]$, or they connect two endpoints that are located anywhere in $[0, 1] \times [0, 1]$. These points that lie on $[0, 1] \times \{0\}$ or $[0, 1] \times \{1\}$, are called {\it braidoid ends}. 

A {\it bonded braidoid diagram}  is a braidoid diagram equipped with bonds, that is, simple horizontal arcs whose endpoints lie on the strands of the bonded braidoid and whose endpoints, called {\it nodes}, differ from the endpoints of the braidoid and from the braidoid ends.
\end{definition}

We now present bonded braidoid isotopy:

\begin{definition}\label{broidiso}\rm
Two bonded braidoid diagrams are said to be {\it isotopic} if one can be obtained from the other by a finite sequence of the moves for bonded braids together with the following moves, all giving rise to the  {\it bonded braidoid isotopy}:
\smallbreak
\begin{itemize}
\item[$\bullet$] {\it Bonded vertical moves} as illustrated in Figure~\ref{biso1}: the endpoints of a bonded braidoid diagram can be pulled up or down in the vertical direction but without letting an endpoint of a bonded braidoid diagram to be pushed/pulled over or under a strand, since this move would correspond to a forbidden move for bonded braidoids (compare with the vertical moves of \cite{GL1} for braidoids).
\smallbreak

\begin{figure}[ht] 
\begin{center} 
\includegraphics[width=3in]{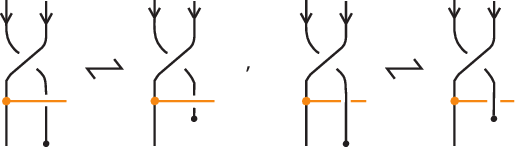} 
\end{center} 
\caption{A vertical move on a bonded braidoid.} 
\label{biso1} 
\end{figure} 

\item[$\bullet$] {\it Bonded swing moves} as illustrated in Figure~\ref{biso2}: the endpoints are allowed to swing to the right or the left like a pendulum as long as the downward orientation on the moving arc is preserved, and the forbidden moves are not violated. A swing move could include a bonded arc or not, see Figure~\ref{biso2}. 
\end{itemize} 
\smallbreak 
\begin{figure}[ht] 
\begin{center} 
\includegraphics[width=3.3in]{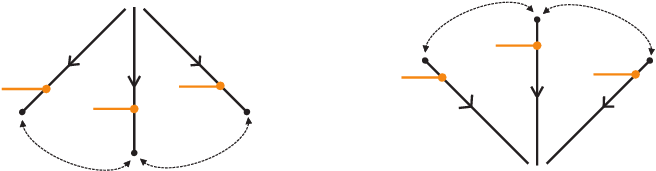} 
\end{center} 
\caption{The swing moves on bonded braidoids.} 
\label{biso2} 
\end{figure} 

\smallbreak
\noindent An isotopy class of bonded braidoid diagrams is called a {\it bonded braidoid}. 
\end{definition}

Our intention now is to adopt the braidoiding algorithm of \cite{GL1} for bonded braidoids. For that we need to define a closure operation for bonded braidoids. We have the following definition: 

\begin{definition}\rm
A {\it labeled bonded braidoid diagram} is a bonded braidoid diagram whose corresponding ends are labeled either with ``o'' or ``u'' in pairs. The {\it closure} of a labeled braidoid is realized by joining each pair of corresponding ends by a vertical segment, either over or under the rest of the bonded braidoid diagram (including the bonds), according to the label attached to the two bonded braidoid ends (see Figure~\ref{clab}).
\end{definition}

\begin{figure}[ht]
\begin{center}
\includegraphics[width=4.8in]{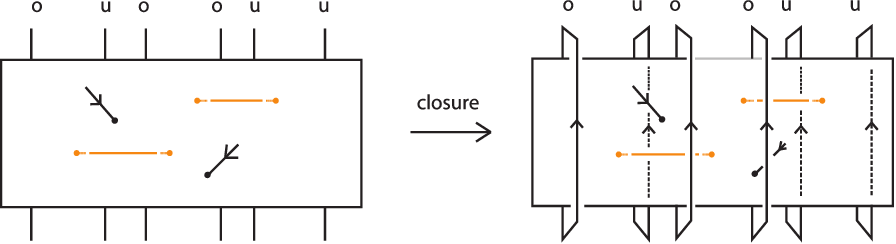}
\end{center}
\caption{The closure of an abstract labeled bonded braidoid.}
\label{clab}
\end{figure} 

In \cite{GL1, GL2}, the authors present a braidoiding algorithm for multi-knotoids, with the use of which, they obtain the analogue of the Alexander theorem for knotoids. In particular, they prove that any (multi)-knotoid diagram is isotopic to the closure of a (labeled) braidoid diagram. The braidoiding algorithm is the braiding algorithm presented in \cite{LR1} (see \S~\ref{alsec} in this paper) adopted in the case of (multi)-knotoids. The reader is referred to \cite{GL1, GL2} for details regarding  braidoiding algorithms for multi-knotoids and how all steps of the braiding algorithm in \cite{LR1} are adapted to the case of knotoids.

As shown in \S~\ref{alsec}, the braiding algorithm for classical braids carries through to the case of topological (enhanced) bonded links by taking care of the orientation of the strands that contain the node of a (enhaned) bond. The same techniques and ideas carry through when it comes to adapting the braidoiding algorithm of \cite{GL1, GL2} to the case of topological (enhanced) bonded multi-knotoids. Hence, we obtain the following  result:

\begin{theorem}[{\bf The analogue of the Alexander theorem for (enhanced) bonded multi-knotoids}] \label{alexkn}
Any oriented topological (enhanced) bonded (multi)-knotoid diagram is isotopic to the closure of a labeled  (enhanced) bonded braidoid diagram.
\end{theorem}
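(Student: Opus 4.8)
The plan is to merge two algorithms that are already in hand: the braidoiding algorithm of \cite{GL1, GL2}, which establishes the Alexander theorem for (multi)-knotoids, and the bonded braiding procedure developed in the proof of Theorem~\ref{alexbl}. Since the braidoiding algorithm is itself the braiding algorithm of \cite{LR1} adapted to accommodate free strands and their endpoints, and since the bonded modifications of \S~\ref{alsec} act only in small neighbourhoods of bonds and of up-arcs carrying a node, the two adaptations can be superimposed without interfering with one another. I would therefore present the construction as the braidoiding algorithm of \cite{GL1, GL2} run on the underlying multi-knotoid, preceded and followed by the bond-handling steps of Theorem~\ref{alexbl}.

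First I would place the bonded multi-knotoid diagram in bonded general position, now in the knotoid sense: no horizontal or vertical arcs, no two subdividing points, local extrema, crossings, or nodes vertically aligned, no crossing horizontally aligned with a bond, and, as in \cite{GL1}, the endpoints placed generically. Next I would run Steps~1 and~2 of the proof of Theorem~\ref{alexbl} verbatim: bring every bond to horizontal position by (bonded) planar isotopy (Figure~\ref{verticalbond}), and then clear every up-arc carrying a node using the appropriate TVT move, according to whether the two joining arcs are antiparallel or parallel (Figures~\ref{orbond} and~\ref{orbond1}). After this preparation all bonds lie on down-arcs and are untouched by what follows.

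I would then apply the braidoiding algorithm of \cite{GL1, GL2} to the underlying multi-knotoid, ignoring the bonds: eliminate the up-arcs by $o$-/$u$-braidoiding moves while retaining the free strands and their end-labels, so that the labeled closure reproduces the original diagram, and finally bring the bonds to tight form by braided vertex slide moves (recall Figure~\ref{gb2}). For the enhanced version no extra work is required: \emph{attracting} and \emph{repelling} are merely labels carried by the bonds, and every move used above preserves the bond type, so the output is a labeled enhanced bonded braidoid whose closure is, by construction, the given diagram.

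The main obstacle I anticipate is the interaction between the knotoid endpoints and the bond-clearing TVT moves. One must verify that clearing a node off an up-arc adjacent to an endpoint never forces a forbidden move, neither the classical knotoid forbidden move of pulling a strand past an endpoint (Figure~\ref{forb}) nor the bonded forbidden moves (Figure~\ref{forb1}). This is controlled by the general-position hypothesis: by pre-subdividing so that nodes and endpoints are never vertically aligned, and by keeping all TVT and braidoiding moves local, any such up-arc is treated inside a small disc disjoint from the endpoint, so the node can be pushed onto a down-arc before the up-arc is braided. Since for the Alexander statement it suffices that some labeled braidoid closure recovers the diagram, which the construction guarantees, the proof is complete once this compatibility is checked; the finer question of which braidoids give isotopic closures is deferred to a Markov-type statement analogous to Theorem~\ref{markbll}.
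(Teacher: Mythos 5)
Your proposal is correct and follows essentially the same route as the paper: the authors likewise obtain this theorem by combining the braidoiding algorithm of \cite{GL1, GL2} for the underlying multi-knotoid with the bond-handling steps (horizontalizing bonds and clearing nodes off up-arcs via Figures~\ref{orbond} and~\ref{orbond1}) from the proof of Theorem~\ref{alexbl}. Your extra attention to the compatibility of the node-clearing moves with the knotoid forbidden moves at endpoints is a sensible refinement of the same argument, not a departure from it.
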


It is crucial to note that different labels on the braidoid  ends of a bonded braidoid may yield non-equivalent closures, due to the presnce of the forbidden moves.  

Moreover, it is worth mentioning that in \cite{GL1} the authors prove that any knotoid diagram may be isotoped to the closure of some labeled braidoid diagram whose labels are all ``u'' (\cite{GL1} Corollary 1), and they define a {\it uniform braidoid} to be a labeled braidoid with all labels ``u''. Similarly, we shall call the labeled bonded braidoid with all labels ``u'', the {\it uniform bonded braidoid} and we have the following result:

\begin{theorem}
Any bonded multi-knotoid diagram is isotopic to the uniform closure of a bonded braidoid diagram.
\end{theorem}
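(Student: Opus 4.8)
The plan is to obtain this statement as a refinement of Theorem~\ref{alexkn}: starting from the labeled bonded braidoid that the braidoiding algorithm already produces, I would show that every pair of corresponding ends carrying an ``o'' label can be converted into a pair carrying a ``u'' label by a bonded braidoid isotopy, so that after finitely many such conversions all labels are ``u'' and the closure is uniform. This follows the strategy of \cite{GL1} (Corollary 1) for classical braidoids verbatim at the level of the underlying braid strands; the only genuinely new point is to check that the conversion is not obstructed by the bonds, and that is where the argument needs care.

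First I would invoke Theorem~\ref{alexkn} to present the given bonded multi-knotoid $(K,B)$ as the closure of some labeled bonded braidoid diagram $\mathcal{B}$ whose ends carry a mixture of ``o'' and ``u'' labels. I would then induct on the number of pairs of ends labeled ``o''. For the inductive step, fix one such pair: its closing segment runs \emph{over} the rest of the diagram, bonds included. By the classical mechanism of \cite{GL1}, this over-running closing segment can be replaced by an under-running one by swinging the corresponding free strand around so that each overpass the closing segment used to make is instead created as a crossing absorbed into the body of $\mathcal{B}$, after which the closing segment re-enters as an underpass. On the braidoid level this is a composite of the bonded swing and vertical moves (recall Figure~\ref{biso2}) together with the incorporation of the newly created crossings; on the level of the closure it is an isotopy of $(K,B)$, and it changes exactly that pair of labels from ``o'' to ``u''.

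The step where the bonds actually enter is the passage of the re-routed closing segment past the bonds of $\mathcal{B}$. When the segment is pushed from the ``over'' side to the ``under'' side, it may be forced to cross some bonds; I would dispose of this with the braided vertex slide moves (recall Figures~\ref{gb2} and~\ref{regulartotight1}), which are precisely the moves that let a strand pass over or under a bond and then be pulled to the side into tight form. Since these moves belong to bonded braidoid isotopy and never push an endpoint across a strand, no bonded forbidden move (Figure~\ref{forb1}) is triggered, and the downward-monotonicity hypothesis of the swing moves is respected because the closing segment is monotone away from its endpoint. Hence the presence of bonds does not obstruct the conversion of a single ``o'' label, and the induction goes through, terminating in a uniform bonded braidoid whose closure is isotopic to $(K,B)$.

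The main obstacle, and the only place that does not reduce immediately to \cite{GL1}, is this local verification that a closing segment can always be slid past a bond by the braided vertex slide moves without producing a configuration excluded by the bonded forbidden moves; once that local check is secured, the global induction over the ``o''-labeled ends is routine. I would therefore devote the bulk of a full write-up to that one local diagrammatic check and treat the remainder as an appeal to Theorem~\ref{alexkn} and to \cite{GL1}.
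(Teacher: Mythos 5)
Your route differs from the paper's and, as it stands, has a gap at its central step. The paper does not produce a mixed-label braidoid and then repair it: it first isotopes the bonded multi-knotoid so that every bond node lies on a down-arc (the moves of Figures~\ref{orbond} and~\ref{orbond1}), and then runs the \emph{uniform} braidoiding algorithm of \cite{GL1} directly, so that all labels are ``u'' by construction and no label conversion is ever needed. Your plan instead starts from the mixed-label output of Theorem~\ref{alexkn} and converts each ``o'' pair to a ``u'' pair by a bonded braidoid isotopy. That conversion is the gap: Corollary~1 of \cite{GL1} is a statement about building a uniform braidoid from a knotoid diagram, not a procedure for relabelling an already-constructed labeled braidoid while preserving its closure, so it does not apply ``verbatim.'' Moreover, the closing segment of a labeled pair is not a free strand terminating at a knotoid endpoint, so the swing and vertical moves of Definition~\ref{broidiso} do not act on it; and switching one segment from over to under changes its crossings with \emph{every} arc, bond, and other closing segment it meets, not only the overpasses you propose to absorb into the body of $\mathcal{B}$. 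Deciding when two labeled bonded braidoids have isotopic closures is precisely the content of an $L$-/Markov-type equivalence for bonded braidoids, which the paper explicitly postpones to future work (Section~\ref{sec11:furtherwork}); your argument therefore leans on machinery that is stronger than the theorem being proved and is not yet available.

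If you want to keep your strategy, you would need to prove the conversion as a standalone lemma: that an ``o''-labeled pair can be traded for a ``u''-labeled pair at the cost of inserting explicit descending crossings (and vertex slides past bonds) into the braidoid body, with the isotopy of closures exhibited directly, including the interaction with the other closing segments. The far shorter path is the paper's: use Figures~\ref{orbond} and~\ref{orbond1} to clear all bond nodes off the up-arcs, observe that the bonds then lie on down-arcs and are untouched by the braidoiding moves, and invoke the uniform braidoiding of \cite{GL1} on the underlying multi-knotoid so that all labels come out ``u'' from the start.
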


\begin{proof}
We first treat strands of the braids that contain the nodes of bonds as shown in Figures~\ref{orbond} and \ref{orbond1}. In that way we obtain up-arcs that do not contain the nodes of bonds. We apply then the braidoiding algorithm of \cite{GL1} (which is similar to the braidoiding algorithm we presented in \S~\ref{alsec} and we obtain a bonded braidoid, whose closure is isotopic to the bonded multi-knotoid.
\end{proof}

\section{Conclusions and Further Work}\label{sec11:furtherwork} 

In this paper we have used topological, diagrammatic and algebraic methods to define and analyze bonded structures. This means that it is assumed that the reader understands how to translate such language into three dimensions. Diagrams can be regarded as schemata for producing specific three dimensional structures and embeddings, just as a weaving pattern is an instruction for producing a given weave. Our topological argumentation is fully rigorous because the diagrammatics are a formal system for these topological and combinatorial structures. Thus the diagrammatics form a pivotal place where one has formality on the one hand and interpretability on the other. 

This topologists' stance may be new to some scientists who look directly at three dimensional structure. Our approach is particularly useful for the formulation of algebraic invariants and for the formulation of algorithms for computation of invariants. To understand the full story for subjects like protein folding, one needs three dimensional structure and here the diagrams form a basis for further articulation.
If we wish to further study physical interactions (beyond the present paper), then one can add more structure to the combinatorial models given here and work with them three dimensionally. This is a project for further research, and it promises deeper relationships between our invariants and the physical behaviours of molecules.

The combinatorial and algebraic coding of structures that we use in this paper has many potentials for applications. The standard diagramming can be used to produce embeddings in three dimensional space directly. The braid representations are concise algebraic methods to encode bonded structures and they are new and need to be studied further for their potential. Each method of formalizing a three dimensional topological structure has its own properties that deserve further research. 

In a paper to follow the present work we will investigate more deeply the invariants described here. These include the unplugging invariants, the invariants using insertion and the Kauffman bracket and Jones polynomials, and the new invariants that arise from the $L$-move braiding formulations for bonded knots and links. In the case of the $L$-moves, we will continue the formulation in terms of generalizations of the algebraic Markov Theorem for bonded braids and consequent invariants defined in these terms.

Computational approaches to our work include the use of molecular databases  (we refer to Sulkowska's works in KnotProt and the Protein Data Base \cite{DRGDSMRSS}) where one can translate three dimensional data into the combinatorics, allowing one to compute invariants and analyse the bonding structure of proteins. In the other direction one can translate diagrammatic encoding into chosen three dimensional embeddings and then work with these models in three dimensional space, including comparison with molecules from the database. This relationship with experimental and three dimensional information is ongoing. We have used these methods in our previous work and will continue to use it in applications of the present research. 

In the case of the ideas we have suggested for Feynman diagrams, the ideas are in a state of flux in that it is not yet clear that adding knotted structure to Feynman diagrams will advance the understanding of quantum field theory. On the other hand, field theoretic approaches to protein folding are clearly needed (and under investigation \cite{N}) and we need to see how such approaches are related to the three dimensional combinatorial and topological structure of molecules. The Feynman diagrams are an intermediation between the field theory and the combinatorics. For all these reasons we regard the present work as constructing a foundation for much-needed further research.

 Bonded knotoids are especially relevant for modeling open chains such as proteins. We
will revisit (enhanced) bonded knotoids and braidoids  and their topological interactions in the form of (enhanced) bonded braidoid equivalences.

 In another direction we will  define the plat closure for bonded braids and braidoids and will formulate Hilden and Birman type theorems for turning an unoriented bonded knot/knotoid to bonded plat/platoid and for their equivalences. 

We will explore a bonded Morse category, a natural next step for multi-knotoids and linkoids: a Morse-theoretic diagrammatics that admits cups/caps and incorporates bonds. This framework will carry a corresponding move calculus and normal-form results, enabling functorial invariants via monoidal functors to module categories (with trace constructions).


\end{document}